\numberwithin{equation}{section}
\theoremstyle{plain}
\newtheorem{theorem}{Theorem}[section]
\newtheorem{proposition}[theorem]{Proposition}
\newtheorem{lemma}[theorem]{Lemma}
\newtheorem{prop}[theorem]{Proposition}
\newtheorem{corollary}[theorem]{Corollary}
\newtheorem{definition}[theorem]{Definition}
\theoremstyle{definition}
\newenvironment{remark}{\pushQED{\qed} \remarkbase}{\popQED\endremarkbase}
\newcommand{\1}{\mathcal{X}}
\newcommand{\vt}{\vartheta}
\newcommand{\ve}{\varepsilon}
\newcommand{\N}{{\mathbb N}}
\newcommand{\R}{{\mathbb R}}
\def\ba{\begin{aligned}}
\def\ea{\end{aligned}}
\def\beginm{\begin{multline}}
\def\endm{\end{multline}}
\title{Nonnegative controllability for a class of nonlinear degenerate parabolic equations\\ with application to climate science}
\author[G. Floridia]{Giuseppe Floridia} 
\address{Giuseppe Floridia\newline
Universit\`a Mediterranea di Reggio Calabria,\newline
 Dipartimento Patrimonio, Architettura, Urbanistica,\newline
Via dell'Universit\`a 25,
 Reggio Calabria, 89124, Italy}
\email{floridia.giuseppe@icloud.com}
\begin{document}
\markboth{G. Floridia}{Nonnegative controllability for a class\\ of nonlinear degenerate parabolic equations}
\maketitle
\begin{abstract}
\noindent Let us consider a nonlinear degenerate reaction-diffusion equation with application to climate science. After proving that the solution remains nonnegative at any time, when the initial state is nonnegative, we prove the approximate controllability between nonnegative states at any time via multiplicative controls, that is, using as control the reaction coefficient.
%
%
\end{abstract}
%

\begin{small}
\noindent\emph{Keywords}:{\;Semilinear degenerate reaction-diffusion equations; energy balance models in climate science; approximate controllability; multiplicative controls; nonnegative states. 
}
\emph{AMS Subject Classification}{: \,93C20, 35K10, 35K65, 35K57, 35K58}.
\end{small}



\date{} 
\pagestyle{plain}

\section{Introduction} 
Let us consider a general function $a\in C([-1,1])\cap C^1(-1,1)$ such that $a
$ is strictly positive on $(-1,1)$ and  
$a(\pm1)\!=\!0,$ such as $(1-x^2)^\eta$ with $\eta>0.$
In this paper, we will study the following one-dimensional semilinear reaction-diffusion equation $$u_t-(a(x) u_x)_x =\alpha(x,t)u+ f(x,t,u),\;\;\; 
(x,t)\in Q_T :=(-1,1)\times(0,T),\;T>0,$$ 
where $\alpha$ is a bounded function on $Q_T$ and $f(\cdot,\cdot,u)$ is a suitable non-linearity that will be defined below. 
The above semilinear equation
 is a degenerate parabolic equation 
 since the diffusion coefficient 
vanishes at the boundary points of $[-1,1].$ 

Our interest in this kind of degenerate reaction-diffusion equations is motivated by its applications to the \-energy balance models in climate science, see e.g. the Budyko-Sellers model that is obtained from the above class of degenerate equations, 
in the particular case 
$a(x)=1-x^2.$ We devote the entire Section \ref{climate} of this paper to the presentation of these applications of degenerate equations to climate science.

In our mathematical study we need to distinguish between two classes of degenerate problems: weakly degenerate problems $(WDeg)$ (see \cite{CF2} and \cite{F2}) when the  degenerate diffusion coefficient is such that 
$\frac{1}{a}\in L^1(-1,1)$ (e.g. $a(x)=\sqrt{1-x^2}$), and
strongly degenerate problems $(SDeg)$ (see \cite{CFproceedings1} and \cite{F1}) when $\frac{1}{a}\not\in L^1(-1,1)$ (if $a \in C^1([-1,1])$ follows $\frac{1}{a}\not\in L^1(-1,1),$ e.g. $a(x)=1-x^2$).
It is well-known (see, e.g., \cite{ACF}) that, in the $(WDeg)$ case, all functions in the domain of the corresponding differential operator possess a trace on the boundary,
in spite of the fact that the operator degenerates at such points. Thus, in the $(WDeg)$ case we can consider the general Robin type boundary conditions, in a similar way to the uniformly parabolic case. Conversely, in the harder $(SDeg)$ case, one is limited to only the weighted Neumann type boundary conditions.

\noindent Such a preamble allows us to justify the following general problem formulation.

\subsection{Problem formulation}\label{Pbfor}
Let us introduce the following semilinear degenerate parabolic Cauchy problem 
\begin{equation}
\label{PS}
\left\{\begin{array}{l}
\displaystyle{u_t-(a(x) u_x)_x =\alpha(x,t)u+ f(x,t,u)\;\,\quad \mbox{ in } \; Q_T \,:=\,(-1,1)\times(0,T)}\\ [2.5ex]
\displaystyle{
\begin{cases}
\begin{cases}
\beta_0 u(-1,t)+\beta_1 a(-1)u_x(-1,t)= 0  \;\;\;t\in (0,T)\, \\
\qquad\qquad\qquad\qquad\qquad\qquad\qquad\qquad\qquad\quad\qquad(\mbox{for }\, WDeg)\\
\gamma_0\, u(1,t)\,+\,\gamma_1\, a(1)\,u_x(1,t)= 0
 \qquad\;\: t\in (0,T)\,
\end{cases}
\\
\quad a(x)u_x(x,t)|_{x=\pm 1} = 0\,\,\qquad\qquad\qquad\, t\in(0,T)\,\quad(\mbox{for }\, SDeg)
\end{cases}
}\\ [2.5ex]
\displaystyle{u(x,0)=u_0 (x) \in L^2(-1,1)\, 
}
~,
\end{array}\right.
\end{equation}
where 
the 
reaction coefficient $\alpha(x,t)\in L^\infty(Q_T)$ will represent the \textit{multiplicative control} (that is the variable function through which we can act on the system), and $\alpha$ is chosen, in this paper, as a piecewise static function (in the sense of Definition \ref{piece}). Throughout the paper
we always consider the problem \eqref{PS} under the following assumtions:
 \begin{enumerate}
\item[(SL)] $f:Q_T\times\R\rightarrow \R$ is such that
\begin{itemize}
\item $(x,t,u)\longmapsto f(x,t,u)$ is a Carath\'{e}odory 
 function on $Q_T\times\R,$ that is
\begin{itemize}
\item[$\star$] $(x,t)\longmapsto f(x,t,u)$ is measurable, for every $u\in\R,$ 
\item[$\star$] $u\longmapsto f(x,t,u)$ is a continuous function, for a.e. $(x,t)\in Q_T;$
\end{itemize}
\item
$t\longmapsto f(x,t,u)$ is locally absolutely continuous for a.e. $x\in (-1,1),$ for every $u\in\R,$\, and 
\begin{equation*}
  f_t(x,t,u)\,u\geq-\nu\, 
  u^2\,,\;  \text{ for a.e. } t\in (0,T);
\end{equation*}
\item there exist constants $\delta_*\geq 0, \vartheta\in[1,\vartheta_{sup}),$ $\vartheta_{sup}\in\{3,4\},$  
and $\nu
\geq0$
 such that, \mbox{for a.e.} $(x,t)\in Q_T, \forall u,v\in \R,$ we have
\begin{equation}\label{Superlinearit}
|f(x,t,u)|\leq\delta_*\,|u|^\vartheta, 
\end{equation}
\begin{equation}\label{fsigni}
-\nu
\big(1+|u|^{\vartheta-1}+|v|^{\vartheta-1}\big)(u-v)^2\leq \big(f(x,t,u)-f(x,t,v)\big)(u-v)\leq\nu (u-v)^2,
\end{equation}
\end{itemize}
   \item[(Deg)] $a \in C([-1,1])\cap C^1(-1,1)$ is such that
    $$a(x)>0, \,\, \forall \, x \in (-1,1),\quad a(-1)=a(1)=0,$$
    then, we consider the following two alternative cases:
     \begin{itemize}
\item[$(WDeg)$]
 if
 $\;\;\displaystyle\frac{1}{a}\in L^1(-1,1),$ then $\vartheta_{sup}=4$ and in \eqref{PS} let us consider the 
 Robin boundary conditions, where $\beta_0,\beta_1,\gamma_0,\gamma_1\in\R,\;\beta_0^2+\beta_1^2>0, \;\gamma_0^2+\gamma_1^2>0,$ satisfy the sign condition:
    \;\; $\beta_0\beta_1 \leq 0 \;\,\mbox{ and }\;\, \gamma_0\gamma_1 \geq 0;$
%
\item[$(SDeg)$] if $\;\,\displaystyle\frac{1}{a}\not\in L^1(-1,1)$
   and the function $\xi_a(x):=\displaystyle\int_0^x \frac{1}{a(s)}ds 
  \in L^{q_\vartheta}(-1,1),$
  where
    $ q_\vartheta=\max\Big\{\frac{1+\vartheta}{3-\vartheta}, 2\vartheta-1\Big\},$ $\vartheta_{\sup}=3,$ then in \eqref{PS} let us consider the weighted Neumann boundary conditions.
  \end{itemize}
\end{enumerate}

To better clarify the kind of multiplicative controls used, we recall the definition of piecewise static function.
\begin{definition}\label{piece}
We say that a 
function $\alpha\in L^\infty(Q_T)$ is {\it piecewise static} (or a {\it simple function} with respect to the variable {\it t}), if there exist $m\in\N,$ $\alpha_k(x)\in L^\infty(-1,1)$ and $t_k\in [0,T], \,t_{k-1}<t_k,\, k=1,\dots,m$ with $t_0=0 \mbox{ and } t_m=T,$ 
such that 
\begin{equation}\label{alpha_piece}
\alpha(x,t)=\alpha_1(x)\1_{[t_{0},t_1]}(t)+\sum_{k=2}^m \alpha_k(x){\1}_{(t_{k-1},t_k]}(t),
\end{equation}
 where ${\1}_{[t_{0},t_1]}\,  \mbox{  and  }  \,{\1}_{(t_{k-1},t_k]}$ are the indicator function of $[t_{0},t_1]$ and $(t_{k-1},t_k]$, respectively.
Sometime, for clarity purposes, we will call the function $\alpha$ in \eqref{alpha_piece} a {m-steps} piecewise static function
\end{definition}


\subsection{Main results
}\label{C}

In this paper we study the 
controllability of
 $\eqref{PS}$  using \textit{multiplicative controls}, that is the reaction coefficients $\alpha(x,t).$ 
 
%
First, we find that the following general nonnegative result holds also for the degenerate PDE of system \eqref{PS}. That is, if the initial state is nonnegative the corresponding strong solution to \eqref{PS} remains nonnegative  at any moment of time.\\
For the notion of strict/strong solutions of the nonlinear degenerate problem \eqref{PS} see Section \ref{Wp}.
 The following result is classic only for the uniformly parabolic (non degenerate) case.
\begin{proposition}\label{NN}
Let $u_0 \in L^2(-1,1)$ such that $\,u_0(x)\geq 0 \,\mbox{ a.e. } x \in (-1,1).$ 
Let $u$ 
 be the corresponding unique strong solution 
  to 
 $(\ref{PS}).$
 Then
$$u(x,t)\geq 0,\,\,\,\,\mbox{ for a.e. } (x,t)\in Q_T\,.
$$
\end{proposition}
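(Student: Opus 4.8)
The plan is to prove nonnegativity via an energy estimate on the negative part $u^-:=\max\{-u,0\}$, exploiting the sign structure of the diffusion operator and the one-sided bound on the nonlinearity. Writing $u=u^+-u^-$, the strategy is to test the equation \eqref{PS} against $-u^-$ (equivalently, to multiply by $-u^-$ and integrate over $(-1,1)$), so as to derive a differential inequality for $\tfrac{1}{2}\frac{d}{dt}\int_{-1}^1 (u^-)^2\,dx$ that forces $\|u^-(\cdot,t)\|_{L^2}^2$ to stay at $0$, since it starts at $0$ because $u_0\geq 0$. The test function $-u^-$ is admissible in the energy space associated to the weakly/strongly degenerate operator; this is where I would invoke the well-posedness theory of Section \ref{Wp} to ensure $u^-(\cdot,t)$ lies in the correct weighted Sobolev space and that the chain rule $\frac{d}{dt}\int (u^-)^2 = -2\int u_t\,u^-$ is justified for the strong solution.

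First I would treat the diffusion term $\int_{-1}^1 (a(x)u_x)_x\,u^-\,dx$. Integrating by parts, the boundary contributions vanish: in the $(SDeg)$ case directly from the weighted Neumann condition $a u_x|_{x=\pm 1}=0$, and in the $(WDeg)$ case one checks that the Robin conditions together with the sign assumptions $\beta_0\beta_1\leq 0$, $\gamma_0\gamma_1\geq 0$ produce a boundary term of favorable sign. The remaining interior term becomes $\int_{-1}^1 a(x)\,u_x\,(u^-)_x\,dx$; since $(u^-)_x = -u_x\,\mathds{1}_{\{u<0\}}$ almost everywhere, this equals $-\int_{\{u<0\}} a(x)\,(u_x)^2\,dx\leq 0$, and after accounting for the sign from testing against $-u^-$ this term has the correct (dissipative) sign and can be discarded or bounded.

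Next I would handle the reaction and nonlinear terms. The linear reaction $\int \alpha(x,t)\,u\,(-u^-)\,dx = \int_{\{u<0\}}\alpha\,(u^-)^2\,dx$ is controlled by $\|\alpha\|_{L^\infty(Q_T)}\int (u^-)^2\,dx$. For the nonlinearity, the key is the lower bound in \eqref{fsigni}: taking $v=0$ and using that $f(x,t,0)=0$ (which follows from \eqref{Superlinearit} with $u=0$), I obtain $f(x,t,u)\,u\geq -\nu(1+|u|^{\vartheta-1})u^2$, and on the set $\{u<0\}$ this yields a bound of the form $f(x,t,u)(-u^-)\leq \nu(1+|u^-|^{\vartheta-1})(u^-)^2$. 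Combining everything produces
\begin{equation*}
\frac{1}{2}\frac{d}{dt}\int_{-1}^1 (u^-)^2\,dx \leq C\int_{-1}^1 \big(1+|u^-|^{\vartheta-1}\big)(u^-)^2\,dx,
\end{equation*}
with $C$ depending on $\|\alpha\|_\infty$ and $\nu$. Since the strong solution enjoys the requisite $L^\infty$-in-time-$L^2$-in-space bounds (and $\vartheta-1$-power integrability) from the well-posedness results, the factor $1+|u^-|^{\vartheta-1}$ can be absorbed into a time-dependent but integrable coefficient $g(t)\in L^1(0,T)$, giving $\frac{d}{dt}\|u^-\|_{L^2}^2 \leq g(t)\,\|u^-\|_{L^2}^2$. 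Grönwall's inequality with initial datum $\|u^-(\cdot,0)\|_{L^2}=0$ then forces $u^-\equiv 0$, i.e. $u\geq 0$ a.e. in $Q_T$.

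The main obstacle I anticipate is the justification of the whole scheme in the degenerate setting rather than any single algebraic step: establishing that $-u^-$ is an admissible test function in the degenerate energy space, that the integration by parts producing the dissipative diffusion term is rigorous up to the degeneracy points $x=\pm 1$, and that the superlinear term $|u^-|^{\vartheta-1}(u^-)^2$ is genuinely integrable and controllable. The superlinearity forces the Grönwall coefficient to depend on the solution's own norm, so care is needed to ensure $g\in L^1(0,T)$; this is precisely where the constraints $\vartheta<\vartheta_{\sup}$ and the integrability of $\xi_a$ in $(SDeg)$, together with the a priori regularity of strong solutions, should be invoked.
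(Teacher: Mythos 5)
Your overall scheme coincides with the paper's: an energy estimate for $u^-$, integration by parts with the boundary sign conditions, and Gr\"onwall. However, your treatment of the nonlinear term contains a direction error that then creates an avoidable -- and, in the strongly degenerate case, fatal -- complication. To bound $\tfrac{1}{2}\tfrac{d}{dt}\int_{-1}^1(u^-)^2\,dx$ from above you need an \emph{upper} bound for $\int_{\{u<0\}} f(x,t,u)\,u\,dx$. The lower inequality in \eqref{fsigni} with $v=0$, which you invoke, gives $f(x,t,u)\,u\geq -\nu\,(1+|u|^{\vartheta-1})\,u^2$: a bound in the wrong direction, from which your displayed inequality $f(x,t,u)(-u^-)\leq \nu(1+|u^-|^{\vartheta-1})(u^-)^2$ does not follow. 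What does the job is the \emph{right-hand} inequality in \eqref{fsigni} with $v=0$ (together with $f(x,t,0)=0$, which indeed follows from \eqref{Superlinearit}): it yields $f(x,t,u)\,u\leq \nu\,u^2$, a purely linear one-sided bound, and this is exactly what the paper uses. With it the superlinear factor never appears, and Gr\"onwall closes with the constant $\nu+\|\alpha\|_{L^\infty(Q_T)}$. Your fallback -- absorbing $1+|u^-|^{\vartheta-1}$ into a coefficient $g(t)\in L^1(0,T)$ -- would require a spatial $L^\infty$ bound on $u^-(\cdot,t)$; but the embedding $H^1_a(-1,1)\hookrightarrow L^\infty(-1,1)$ fails precisely in the $(SDeg)$ case (as the paper remarks), so your route as written does not close for strongly degenerate $a$, even though it could be repaired by simply switching to the right-hand inequality of \eqref{fsigni}.

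There is a second gap in the regularity bookkeeping. A strong solution lies only in $\mathcal{B}(Q_T)=C([0,T];L^2(-1,1))\cap L^2(0,T;H^1_a(-1,1))$; in general $u_t\notin L^2(Q_T)$, so the pairing $\int u_t\,u^-\,dx$ and the integration by parts on the diffusion term are not defined at that level, and ``invoking the well-posedness theory'' does not by itself license testing with $-u^-$. The paper proceeds in two stages: first it proves the estimate for \emph{strict} solutions ($u_0\in H^1_a(-1,1)$, $u\in\mathcal{H}(Q_T)$), where $u(\cdot,t)\in D(A_0)$ for a.e.\ $t$ and the boundary terms make sense (there the Robin conditions with $\beta_0\beta_1\leq 0$, $\gamma_0\gamma_1\geq 0$ give a boundary contribution of favorable sign, as you correctly anticipated); then, for general $u_0\in L^2(-1,1)$, it approximates $u_0$ by smooth nonnegative data $u_{0k}$, uses the continuous-dependence estimate of Proposition \ref{uni} to get convergence of the strict solutions in $\mathcal{B}(Q_T)$, and extracts an a.e.\ convergent subsequence to transfer the sign to the strong solution. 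Your proposal omits this density step entirely, so even after fixing the nonlinearity estimate it would prove nonnegativity only for $H^1_a$ initial data, not for the $L^2$ data in the statement.
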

A consequence of the result given in Proposition \ref{NN} is that
the solution to the system \eqref{PS} cannot be steered from a nonnegative initial state to any target state which is negative on a nonzero measure set in the space domain, regardless of the choice of the reaction coefficient $\alpha(x,t)$ as multiplicative control. 

Thus, in the following Theorem \ref{T1} we obtain an optimal goal, that is, we approximately control the system \eqref{PS} between nonnegative states via multiplicative controls at any time. 
Let us give the following definition.
%
%
%
\begin{definition}
The system \eqref{PS}  is said to be nonnegatively globally approximately controllable in $L^2(-1,1)$ at any 
time $T>0,$ by means of 
multiplicative controls $\alpha,$ 
if for any nonnegative 
$u_0,u^*\in L^2(-1,1)$ with $u_0\neq0,$ for every $\varepsilon>0$ 
there exists 
a piecewise static multiplicative control 
$\alpha=
\alpha (\varepsilon,u_0,u^*),\,\alpha
\in L^\infty(Q_T),$ such that for the correspon\-ding strong solution 
 $u(x,t)$ of $(\ref{PS})$
we obtain
$$\|u(\cdot,T)-u^*\|_{L^2(-1,1)}< \varepsilon. 
$$
\end{definition}

Now we can state the main controllability result.
\begin{theorem}\label{T1}
The nonlinear degenerate system \eqref{PS}
 is nonnegatively globally approximately controllable in $L^2(-1,1)$ at any 
 time $T>0,$ by means of 2-steps piecewise static multiplicative controls. 
\end{theorem}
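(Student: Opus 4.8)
The plan is to prove the approximate controllability by exploiting the multiplicative control $\alpha$ as a near-constant "mass rescaling" on the first short time interval, and then a "shape-adjustment" evolution on the second. The central idea rests on the following heuristic: when $\alpha(x,t)=\lambda$ is a large positive constant over a very short time $[0,\tau]$, the reaction term dominates the diffusion and the nonlinearity (which is controlled by $|f|\le\delta_*|u|^\vartheta$), so the solution behaves approximately like the ODE $u_t=\lambda u$, i.e. $u(x,\tau)\approx e^{\lambda\tau}u_0(x)$. By choosing $\lambda$ and $\tau$ appropriately (with $\lambda\tau$ of moderate size but $\tau\to 0$), we can inflate the initial datum while keeping its \emph{shape} essentially unchanged, so that after the first step we have reached a state close to $c\,u_0$ for a tunable constant $c>0$.

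The key steps, in order, are as follows. First I would establish the ODE-comparison heuristic rigorously: on a short interval $[0,\tau]$ with $\alpha\equiv\lambda$ spatially constant, I would show via energy estimates (testing \eqref{PS} against $u$ and using the sign/growth conditions \eqref{Superlinearit}--\eqref{fsigni}) that the diffusion and nonlinear contributions are $o(1)$ relative to the reaction as $\tau\to 0$, so that $\|u(\cdot,\tau)-e^{\lambda\tau}u_0\|_{L^2}$ is small. Second, I would invoke Proposition \ref{NN} to guarantee that the intermediate state $u(\cdot,\tau)$ stays nonnegative — this is essential because it forbids sign-changing behaviour that could otherwise defeat the shape-preservation argument. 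Third, having reached a state close to $c\,u_0$, I would use the second control step to steer $c\,u_0$ toward the target $u^*$. The most natural mechanism here is that the \emph{free} degenerate heat semigroup (take $\alpha\equiv 0$ on the second interval) relaxes any nonnegative datum toward the ground state / equilibrium of the degenerate operator, and by combining a suitable spatially-dependent $\alpha_2(x)$ with the diffusion one can approximate a broad class of nonnegative profiles. Concretely, I would write $u^*$ in terms of the spectral decomposition of the self-adjoint degenerate operator $A u=-(a u_x)_x$ with the prescribed ($WDeg$ Robin or $SDeg$ weighted-Neumann) boundary conditions, and argue that the reachable set from $c\,u_0$ under piecewise-static multiplicative controls is dense in the nonnegative cone of $L^2(-1,1)$.

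The hard part will be the second step: controlling the \emph{shape} of the solution rather than just its total mass. Multiplicative controls are severely constrained — they cannot change the sign structure of $u$ and act only by pointwise multiplication — so steering an arbitrary nonnegative $c\,u_0$ to an arbitrary nonnegative $u^*$ requires a delicate approximation argument. I expect this to hinge on two facts: that the degenerate diffusion mixes the datum across $(-1,1)$ so that after a positive time the solution is strictly positive on the interior (a strong maximum principle for the degenerate operator, available because $a>0$ on $(-1,1)$), and that one can then use a spatially-varying $\alpha_2(x)=\tfrac{1}{\tau_2}\log\!\big(u^*(x)/v(x)\big)$-type choice (with $v$ the post-diffusion profile) to reshape $v$ into a good approximation of $u^*$, again in the $\tau_2\to 0$ regime where reaction dominates. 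Making this rigorous demands careful $L^2$ estimates on the error between the true PDE solution and the formal ODE flow, uniformly controlling the nonlinear term via \eqref{Superlinearit} and the degenerate-diffusion loss near $x=\pm1$; the integrability hypotheses on $1/a$ and $\xi_a$ in assumption (Deg) are precisely what make these boundary estimates tractable in the two cases.
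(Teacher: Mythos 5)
Your Step 1 is essentially the paper's Step 1: a constant control $\alpha_1=\frac{\log S_\varepsilon}{T_1}$ on a short interval, with the reaction dominating so that the state is inflated to (approximately) a multiple of the initial profile, the Duhamel remainder being $O(T_1)$ by the $L^2$ bound on $f$ (Proposition \ref{f in L2}). The genuine gap is in your Step 2. The control $\alpha_2(x)=\frac{1}{\tau_2}\log\big(u^*(x)/v(x)\big)$ is in general not an admissible ($L^\infty$) control: the target $u^*$ is merely a nonnegative $L^2$ function and may vanish on a set of positive measure, so the logarithm is unbounded below; and your proposed remedy --- run free diffusion first and invoke a strong maximum principle to make $v$ strictly positive --- does not rescue this, because interior positivity of $v$ gives no uniform lower bound as $x\to\pm1$ where $a$ degenerates (there is no standard Hopf-type control at the degenerate endpoints, and $v$ is not explicit), so $u^*/v$ need not be bounded above or below. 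The paper sidesteps both difficulties at the outset: it approximates \emph{both} $u_0$ and $u^*$ by strictly positive $C^1([-1,1])$ functions $u_0^\varepsilon,u^*_\varepsilon$, and chooses the intermediate state $S_\varepsilon u_0^\varepsilon$ with $S_\varepsilon:=\max_x\{u^*_\varepsilon/u_0^\varepsilon\}+1$, which forces the ratio $u^*_\varepsilon/(S_\varepsilon u_0^\varepsilon)$ into $[\eta^*,1]$, hence $\alpha_\varepsilon=\log\big(u^*_\varepsilon/(S_\varepsilon u_0^\varepsilon)\big)$ is bounded \emph{and nonpositive}. That sign normalization is not cosmetic: it gives $e^{\alpha_{\varepsilon j}(x)(T-\tau)/T}\le 1$ in the variation-of-constants remainder, and it is what allows Corollary \ref{uni neg} (valid only for $\alpha\le 0$) to absorb the Step-1 error $\sigma_0^\varepsilon$ with the constant $\sqrt{2}$. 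Your proposal, lacking this normalization, would have to estimate short-time evolution under a control of both signs, and more importantly has no mechanism to make $\alpha_2$ bounded at all.

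Two further items you flag but do not supply, and which carry the real analytic weight. First, the smallness of the second-step Duhamel remainder requires the uniform bound $\|(a u_x)_x+f(\cdot,\cdot,u)\|_{L^2(\widetilde{Q}_T)}\le K$ as $T\to T_1^+$; the paper proves this by energy estimates after replacing $\alpha_\varepsilon$ with smooth nonpositive approximants $\alpha_{\varepsilon j}$ satisfying the boundary compatibility conditions \eqref{Ass alpha} ($\alpha_{\varepsilon j}'/a\to 0$ and $\alpha_{\varepsilon j}'a'\to 0$ at $\pm1$), precisely so that the integrations by parts close at the degenerate endpoints and under the Robin sign conditions --- this smoothing layer is absent from your sketch and cannot be skipped. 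Second, your spectral ``relaxation to the ground state / density of the reachable set'' idea for the second interval is both unnecessary and counterproductive: long diffusion times would destroy the small-time claim, and the theorem's ``any $T>0$'' for large $T$ is instead obtained in the paper by iterating the two-step scheme on $n$ short subintervals to stabilize the state near $u^*$, a closing argument your proposal omits.
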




\subsection{Outline of the paper} 
The proofs of the main results are given in Section \ref{main results}. In Section \ref{Wp} we recall the well-posedness of \eqref{PS}, in particular we introduce the notions of strict and strong solutions, and we give some useful estimates and properties for this kind of degenerate PDEs, that we use in Section \ref{main results}. The proofs of the existence and uniqueness results for strict and strong solutions are contained, in appendix, in Section \ref{AppA}.
In Section \ref{climate}
we present
some 
 motivations for studying degenerate parabolic problems with the above structure, in particular we introduce the Budyko-Sellers model, an energy balance model in climate science.
We complete this introduction with Section \ref{art} where we present the state of the art in both multiplicative controllability and degenerate reaction-diffusion equations.
\subsection{State of the art in multiplicative controllability and degenerate pa\-rabolic equations 
}\label{art}

Control theory appeared in the second part of last
century in the context of li\-near ordinary differential equations and was motivated by several engineering, Life sciences and economics applications.
Then, it  was extended to various linear partial differential equations (PDEs) governed by {additive} locally distributed controls (see \cite{ACF}, \cite{BFH}, \cite{CMVams16}, 
\cite{D1}, \cite{FattRuss}, \cite{FCZ} and \cite{PZ}), 
 or boundary controls. 
Methodologically-speaking, these kinds of controllability results for PDEs are typically obtained using the {linear} duality pairing technique between the control-to-state mapping at hand and its dual observation map (see the Hilbert Uniqueness Method - HUM - introduced in 1988 by J.L. Lions), sometimes using the Carleman estimates tool (see, e.g., \cite{ACF}, \cite{CFGY} and \cite{CFY1}).
If the above map is nonlinear, as it happens in our case for the multiplicative controllability, in general 
the aforementioned approach does not apply.

 From the point of view of applications, the approach based on multiplicative controls seems more realistic than the other kinds of controllability, since additive and boundary controls don't model in a realistic way the problems that involve inputs with high energy levels; such as energy balance models in climate science (see Se\-ction \ref{climate}), chemical reactions controlled by catalysts, nuclear chain reactions, smart materials, social science, ecological population dynamic (see \cite{TZZ}) and biome\-dical models. 
An important class of biomedical reaction-diffusion problems consists in the models of tumor growth 
(see, e.g., Section 7 {\it \lq\lq Control problems''} of the survey paper \cite{BP00} by Bellomo and Preziosi). As regards degenerate reaction-diffusion equations there are also interesting models in population genetics, in particular we recall the Fleming-Viot model (see Epstein's and Mazzeo's book \cite{EM}).


The above considerations motivate our investigation of the multiplicative controllability. As regards the topic of multiplicative controllability of PDEs we recall the pioneering paper 
\cite{BS} 
by Ball, Marsden, and Slemrod, and in the framework of the Schr\"odinger equation we especially mention \cite{BL} by Beauchard and Laurent, and \cite{CGM} by Coron, Gagnon and Moranceu.
 As regards parabolic and hyperbolic equations we focus on some results by Khapalov contained in the book \cite{KB}, and in the references therein. 

The main results of this paper deal with approximate multiplicative controllability of semilinear degenerate reaction-diffusion equations. This study is motivated by its applications (see in Section \ref{climate} its applications to an energy balance model in climate science: the Budyko-Sellers model) and also by the classical results
that hold for the corresponding 
non-degenerate reaction-diffusion equations, 
 governed via the coefficient of the reaction term (multiplicative control). 
 For the above class of uniformly parabolic  
 equations there are some important obstructions to multiplicative controllability due to the strong maximum principle (see the seminal papers by J.I. Diaz \cite{Dobstr} and \cite{D}, and also the papers \cite{CFK} and \cite{KB}), that implies the well-known nonnegative constraint. In this paper in Proposition \ref{NN}  we extend 
 to the semilinear degenerate system \eqref{PS} the above nonnegative constraint, that motivates our inve\-stigations regarding the nonnegative controllability for the semilinear degenerate system \eqref{PS} with general weighted Robin/Neumann boundary conditions.

Regarding the nonnegative controllability for reaction-diffusion equations, first, Khapalov in 
 \cite{KB} obtains the nonnegative approximate controllability in large time of the 
one dimensional 
 heat equation 
  via multiplicative controls. Thus,
%
the author and Cannarsa considered the linear degenerate problem associated with $(\ref{PS}),$ 
both in
 the {weakly degenerate} 
  $(WDeg)$ case,  in \cite{CF2},  
 and in the {strongly dege\-nerate} $(SDeg)$ case,
 in \cite{CFproceedings1}. 
Then, the author  in \cite{F1} investigates semilinear strongly degenerate problems. 
This paper can be seen as the final step of the study started in 
 \cite{CFproceedings1}, \cite{CF2} and \cite{F1},  
where the global nonnegative approximate controllability was obtained 
in large time.
Indeed, 
in this paper we introduce a new 
proof, 
that permits us to obtain the nonnegative controllability in
 {\it arbitrary small time} and consequently at any time, instead of {\it large time}. 
Moreover, the proof, contained in 
 \cite{F1}, of the nonnegative controllability in large time for the $(SDeg)$ case has the further obstruction that permitted to treat only superlinear growth, with respect to $u,$ of the nonlinearity function $f(x,t,u).$ While the new proof, adopted in this paper, 
permits us to control also linear growth of $f,$ with respect to $u.$ 
 
Finally, we mention some recent papers about the approximate multiplicative controllability for reaction-diffusion equations between sign-changing states:
 in \cite{CFK} by the author with Cannarsa and Khapalov regarding a semilinear uniformly parabolic system, 
and \cite{FNT} by the author with Nitsch and Trombetti, concerning degenerate parabolic equations.  Furthermore, 
some interesting contributions about exact controllability issues for evolution equations via bilinear controls have recently appeared, in particular we mention \cite{ACU} by Alabau-Boussouira, Cannarsa and Urbani, and \cite{DL} by Duprez and Lissy.

To round off the discussion regarding the multiplicative controllability, we note that recently there has been an increasing interest in these topics, so many authors are starting to extend the above results from reaction-diffusion equations to other operators. In \cite{V}, Vancostenoble proved a nonnegative controllability result in large time for a linear parabolic equation with singular potential, following the approach of \cite{CFproceedings1} and \cite{CF2}.
 An interesting work in progress, using 
 the technique of this paper,
 consists of approaching the problem of the approximate controllability via multiplicative control of nonlocal operators, e.g. the fractional heat equation studied in \cite{BWZ} by Biccari, Warma and Zuazua. Other interesting open problems are suggested by the papers \cite{FR} and \cite{KCPF}.

\section{Well-Posedness}\label{Wp}
The well-posedness of the $(SDeg)$ problem in \eqref{PS} 
is introduced in \cite{F1}, while the well-posedness of the $(WDeg)$ problem in \eqref{PS} 
is presented in \cite{F2}. 
In order to study the well-posedness of the degenerate problem $(\ref{PS})$, it is necessary to introduce in Section \ref{Wp1} the weighted Sobolev spaces $H^1_a(-1,1)$ and $H^2_a(-1,1),$ and their main properties. Finally, in Section \ref{WD} we introduce the notions of strict and strong solutions for this class of semilinear degenerate problems, and we give the corresponding existence and uniqueness results, that are proved in appendix in Section \ref{AppA}. 
\subsection{Weighted Sobolev spaces}\label{Wp1}
Let $a \in C([-1,1])\cap C^1(-1,1)$ such that the assumptions $(Deg)$ holds, we define the following spaces:
\begin{itemize}
 \item 
$
 H^1_a(-1,1)\!\!=\!\!\begin{cases}
\{u\in L^2(-1,1)\cap AC([-1,1])
|\sqrt{a}\,u_x\in L^2(-1,1)\}\:\mbox{for}\, (WDeg)\\
\{u\in L^2(-1,1)\cap AC_{loc}(-1,1)
|\sqrt{a}\,u_x\in L^2(-1,1)\}\;\mbox{for} (SDeg),
\end{cases}
\!\!\!\!\!
$\\
where $AC([-1,1])$ denotes the space of the absolutely continuous functions on $[-1,1],$ and $AC_{loc}(-1,1)$ 
denotes the space of the locally absolutely continuous functions on $(-1,1);$\\
\item $H^2_a(-1,1):=\{u\in H^1_a(-1,1)| \, au_x \in H^1 (-1,1)\}.$
\end{itemize}
See \cite{ACF} (and also \cite{CFproceedings1}, \cite{CF2}, \cite{F1} and \cite{F2}) for the main functional properties of these kinds of weighted Sobolev spaces, in particular we note that
$H^1_a(-1,1)$
and $H^2_a(-1,1)$ are Hilbert spaces with the natural scalar products induced,
respectively, by the following norms
$$\|u\|_{1,a}^2:=\|u\|_{L^2(-1,1)}^2+|u|_{1,a}^2 \;\;\;\mbox{ and }\;\;\; \|u\|_{2,a}^2:=\|u\|_{1,a}^2+\|(au_x)_x\|^2_{L^2(-1,1)},$$
where $|u|_{1,a}^2:=\|\sqrt{a}u_x\|_{L^2(-1,1)}^2$ is a seminorm.\\
We recall the following important remark.
\begin{remark}
The space $H^1_a(-1,1)$ is embedded in $L^\infty(-1,1)$ only in the weakly degenerate case (see \cite{ACF}, \cite{CFproceedings1}, \cite{CF2} 
and \cite{F1}).
\end{remark}
In \cite{CMV2}, see Proposition 2.1 (see also the Appendix of \cite{F1} and Lemma 2.5 in \cite{CMP}), the following proposition is obtained. 
 \begin{proposition}\label{caratH2}
In the $(SDeg)$ case, 
for every $u\in H^2_a(-1,1)$ we have
$$\lim_{x\rightarrow\pm1}a(x)u_x(x)
=0
\qquad\qquad\text{ and }\qquad\qquad
au\in H^1_0 (-1,1)\,.
$$ 
 \end{proposition}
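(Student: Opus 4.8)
The plan is to treat the two assertions separately, first pinning down the boundary behaviour of the weighted flux $au_x$, and then using strong degeneracy to place $au$ in $H^1_0(-1,1)$.

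\textbf{Step 1 (the flux vanishes at $\pm1$).} By the very definition of $H^2_a(-1,1)$, the function $g:=au_x$ lies in $H^1(-1,1)$, hence by the one-dimensional Sobolev embedding it has an absolutely continuous representative on $[-1,1]$; in particular the limits $g(\pm1)=\lim_{x\to\pm1}a(x)u_x(x)$ exist and are finite. To see that they vanish I argue by contradiction. Since $u\in H^1_a(-1,1)$ we have $\int_{-1}^1 a\,u_x^2\,dx=\int_{-1}^1 g^2/a\,dx<\infty$. If, say, $g(1)=L\neq0$, continuity of $g$ produces $\delta>0$ with $g^2\ge L^2/2$ on $(1-\delta,1)$, so that $\int_{1-\delta}^1 g^2/a\ge (L^2/2)\int_{1-\delta}^1 (1/a)\,dx=+\infty$, because in the $(SDeg)$ regime $1/a\notin L^1$ near the endpoint. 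This contradicts the finiteness of $\int_{-1}^1 a\,u_x^2\,dx$, forcing $L=0$, and the argument at $-1$ is identical.

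\textbf{Step 2 (placing $au$ in $H^1$).} I then show $au\in H^1(-1,1)$. Membership $au\in L^2(-1,1)$ is immediate since $a$ is bounded and $u\in L^2$. Testing against $\varphi\in C_c^\infty(-1,1)$ and using that $u\in AC_{loc}(-1,1)$ and $a\in C^1(-1,1)$, the product rule on compact subintervals of $(-1,1)$ identifies the distributional derivative of $au$ with $a'u+au_x$. Since $au_x=g\in H^1(-1,1)\subset L^2(-1,1)$, the whole matter reduces to the single point $a'u\in L^2(-1,1)$.

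\textbf{Step 3 (the traces are zero).} Once $au\in H^1(-1,1)$ is known, its continuous representative admits finite limits at $\pm1$. To identify them with $0$ I argue again by contradiction: if $a(x)u(x)\to\ell\neq0$ as $x\to1$, then $|u(x)|\ge |\ell|/(2a(x))$ near $1$, whence $u^2\ge \ell^2/(4a^2)\ge \ell^2/(4a)$ once $a<1$, and therefore $\int_{1-\delta}^1 u^2\ge(\ell^2/4)\int_{1-\delta}^1(1/a)\,dx=+\infty$ by strong degeneracy, contradicting $u\in L^2$. Hence $(au)(\pm1)=0$, and since in one dimension $H^1_0(-1,1)=\{w\in H^1(-1,1):w(\pm1)=0\}$, this yields $au\in H^1_0(-1,1)$.

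\textbf{Main obstacle.} The only genuinely delicate point is the claim $a'u\in L^2(-1,1)$ in Step 2: the standing hypothesis gives merely $a\in C^1(-1,1)$, so a priori $a'$ may blow up at $\pm1$. This is exactly where the structural degeneracy must enter. For the model weights $a(x)=(1-x^2)^\eta$ with $\eta\ge1$ one checks directly that $a'$ is bounded on $[-1,1]$, so $a'u\in L^2$ is trivial; in general the control of $a'u$ near the endpoints is obtained from the degeneracy assumptions on $a$ together with the quantitative requirement $\xi_a\in L^{q_\vartheta}(-1,1)$. I expect this $L^2$-estimate of $a'u$ to be the crux of the proof, the remaining steps being soft consequences of the Sobolev embedding and of the non-integrability of $1/a$.
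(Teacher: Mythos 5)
Your Steps 1 and 3 are correct, and they are exactly the arguments used in the sources to which the paper defers (the paper itself contains no proof of Proposition \ref{caratH2}; it cites Proposition 2.1 of \cite{CMV2}, the Appendix of \cite{F1} and Lemma 2.5 of \cite{CMP}): since $g=au_x\in H^1(-1,1)$ is continuous up to the boundary, a nonzero limit $g(\pm1)$ would make $\int_{-1}^1 au_x^2\,dx=\int_{-1}^1 g^2/a\,dx$ infinite because $1/a$ is not integrable near the endpoint, and a nonzero limit of $au$ would force $u^2\geq \ell^2/(4a^2)\geq \ell^2/(4a)$ near the endpoint, contradicting $u\in L^2(-1,1)$. (One small caution: as literally written, $1/a\notin L^1(-1,1)$ does not give non-integrability near \emph{each} endpoint separately, which both of your contradiction arguments use; this per-endpoint non-integrability is what the $(SDeg)$ setting intends and what holds in the cited works.)

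The genuine gap is the one you flagged yourself: Step 2 never proves $a'u\in L^2(-1,1)$, and the mechanism you propose for closing it cannot work. The hypothesis $\xi_a\in L^{q_\vartheta}(-1,1)$ controls only the size of $\int 1/a$ and is completely blind to oscillations of $a'$. Concretely, take $a(x)=(1-x^2)\left(2+\sin\frac{1}{1-x^2}\right)$: then $a\in C([-1,1])\cap C^1(-1,1)$, $a>0$ inside, $a(\pm1)=0$, $\frac{1}{3(1-x^2)}\leq\frac{1}{a}\leq\frac{1}{1-x^2}$, so $1/a\notin L^1$ near both endpoints while $\xi_a$ grows only logarithmically and hence lies in every $L^q(-1,1)$; all the standing $(Deg)$--$(SDeg)$ hypotheses hold. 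Yet for $u\equiv 1\in H^2_a(-1,1)$ the conclusion would assert $a\in H^1_0(-1,1)$, i.e.\ $a'\in L^2(-1,1)$, which fails because the term $2x\cos\left(\frac{1}{1-x^2}\right)/(1-x^2)$ in $a'$ is not square integrable near $\pm1$. So no argument can derive Step 2 from the hypotheses as you (and, taken literally, the paper) state them: the proposition is imported from settings with stronger endpoint regularity. In the strongly degenerate framework of \cite{F1} one has $a\in C^1([-1,1])$ (the paper hints at this when it remarks that $a\in C^1([-1,1])$ forces $1/a\notin L^1(-1,1)$), and in \cite{CMV2} the weight is of power type $x^\alpha$, $\alpha\in[1,2)$; in either case $a'$ is bounded, and then $a'u\in L^2(-1,1)$ is immediate from $u\in L^2(-1,1)$ alone, with no use of $\xi_a$ at all. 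With that one line supplied in place of your contradiction-free ``main obstacle'' paragraph, your proof is complete and coincides with the standard one.
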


\subsubsection{Some spectral properties} 

Let us define the operator $(A_0,D(A_0))$ in the following way
\begin{equation}
\label{DA0}
\left\{\begin{array}{l}
\displaystyle{D(A_0)\!\!=\!\!\begin{cases}\!\!
\left\{u\in H^2_a (-1,1)\bigg\rvert
\begin{cases}
\beta_0 u(-1)+\beta_1 a(-1)u_x(-1)= 0 
\\
\gamma_0\, u(1)\,+\,\gamma_1\, a(1)\,u_x(1)= 0 
\end{cases}
\!\!\!\!\!\!\! \right\}\;\mbox{for }\, (WDeg)\\
\;\;
H^2_a (-1,1)\qquad\qquad\qquad\qquad\qquad\qquad\qquad\qquad\;\;\quad\;\mbox{for }\, (SDeg)
\end{cases}
}\\ [4.8ex]
\displaystyle{\;\;\;\;A_0\,u=(au_x)_x\,,
\,\, \forall \,u \in D(A_0)}~.
\end{array}\right.
\end{equation}
\begin{remark}\label{DASdeg}
We note that in the $(SDeg)$ case,  for every $u\in D(A_0)$ Proposition \ref{caratH2} guarantees that $u$ satisfies the weighted Neumann boundary conditions.
\end{remark}

 In the case $\alpha\in L^\infty (-1,1),$ 
we define the operator $(A,D(A))$ as 
\begin{equation}\label{DA}
\left\{\begin{array}{l}
\displaystyle{
D(A)=D(A_0)
}\\ [2.5ex]
\displaystyle{A\,u=(au_x)_x +\alpha\,u, \,
\,\, \forall \,u \in D(A)}~.
\end{array}\right.
\end{equation}
We recall some spectral results obtained in \cite{CF2} for $(WDeg),$ and in \cite{CFproceedings1} for $(SDeg).$ Let us start with Proposition \ref{str cont}. 
\begin{proposition}\label{str cont}
$(A, D(A_0))$ is a closed, self-adjoint, dissipative operator with dense domain in $L^2 (-1,1)$.
Therefore, $A$ is the infinitesimal generator of a strongly continuous semigroup 
of bounded linear operators on $L^2(-1,1)$.
\end{proposition}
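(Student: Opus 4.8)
The plan is to reduce everything to the purely diffusive operator $A_0$ and then to treat the zero-order term $\alpha$ as a bounded self-adjoint perturbation. I would first introduce the symmetric bilinear form
$$\mE(u,v):=\int_{-1}^{1}a(x)\,u_x\,v_x\,\diff x$$
on the form domain $H^1_a(-1,1)$, augmented in the $(WDeg)$ case by the boundary contributions associated with the Robin coefficients $\beta_0,\beta_1,\gamma_0,\gamma_1$. The sign hypotheses $\beta_0\beta_1\le0$ and $\gamma_0\gamma_1\ge0$ guarantee that $\mE$ is nonnegative, while completeness of $H^1_a(-1,1)$ for the norm $\|\cdot\|_{1,a}$ makes $\mE$ closed. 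Density of $D(A_0)$ in $L^2(-1,1)$ is immediate, since $C^\infty_c(-1,1)\subset D(A_0)$ and is already dense in $L^2(-1,1)$.

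Next I would integrate by parts to obtain
$$\la A_0u,v\ra_{L^2(-1,1)}=-\mE(u,v)\qquad\text{for all }u\in D(A_0),\ v\in H^1_a(-1,1),$$
the delicate point being that the boundary terms $\big[a\,u_x\,v\big]_{-1}^{1}$ are under control: in the $(SDeg)$ case they vanish by Proposition \ref{caratH2}, which gives $\lim_{x\to\pm1}a(x)u_x(x)=0$, and in the $(WDeg)$ case they are precisely the Robin contributions absorbed into $\mE$. Symmetry of $A_0$ is then read off directly from this identity, and taking $v=u$ yields $\la A_0u,u\ra_{L^2(-1,1)}=-\mE(u,u)\le0$, so $A_0$ is dissipative (indeed nonpositive).

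To upgrade symmetry to self-adjointness I would establish maximal dissipativity, namely that $\lm I-A_0$ maps $D(A_0)$ onto $L^2(-1,1)$ for some $\lm>0$. Given $g\in L^2(-1,1)$, the weak problem $\lm\la u,v\ra_{L^2(-1,1)}+\mE(u,v)=\la g,v\ra_{L^2(-1,1)}$ for every $v\in H^1_a(-1,1)$ admits a unique solution $u\in H^1_a(-1,1)$ by the Lax--Milgram theorem, since its left-hand side is continuous and coercive on $H^1_a(-1,1)$; a degenerate elliptic regularity argument then shows $a\,u_x\in H^1(-1,1)$ and that $u$ meets the prescribed boundary conditions, i.e. $u\in D(A_0)$ and $\lm u-A_0u=g$. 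A densely defined, symmetric, maximally dissipative operator is automatically self-adjoint and closed, so $A_0$ carries all the required properties. Finally, multiplication by $\alpha\in L^\infty(-1,1)$ is a bounded self-adjoint operator, whence $A=A_0+\alpha I$ is self-adjoint and closed on $D(A)=D(A_0)$ and satisfies $\la Au,u\ra_{L^2(-1,1)}\le\|\alpha\|_{L^\infty}\|u\|^2_{L^2(-1,1)}$; thus $A-\|\alpha\|_{L^\infty}I$ is self-adjoint and dissipative, and by the Lumer--Phillips theorem (equivalently, by the spectral theorem for self-adjoint operators that are bounded above) it generates a $C_0$ semigroup of contractions, so that $A$ itself generates a strongly continuous semigroup of bounded operators with growth bound $\|\alpha\|_{L^\infty}$. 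I expect the genuine obstacle to be the degenerate regularity and trace step, namely showing that the Lax--Milgram solution really lies in $D(A_0)$ and that the boundary terms behave as claimed, since in the strongly degenerate regime pointwise traces need not exist and one must argue carefully through Proposition \ref{caratH2}.
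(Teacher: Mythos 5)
Your proof is correct, and it is essentially the paper's own route: the paper in fact gives no proof of this proposition, recalling it from \cite{CF2} (for the $(WDeg)$ case) and \cite{CFproceedings1} (for the $(SDeg)$ case), and in those references the argument is precisely the one you outline --- the closed nonnegative symmetric form on $H^1_a(-1,1)$, integration by parts with the boundary terms controlled by the sign conditions $\beta_0\beta_1\le0$, $\gamma_0\gamma_1\ge0$ in the weakly degenerate case and killed by Proposition \ref{caratH2} in the strongly degenerate case, maximal dissipativity via Lax--Milgram, and the degenerate regularity step showing $au_x\in H^1(-1,1)$ (which works because $(au_x)_x=\lambda u-g\in L^2(-1,1)$ distributionally and $au_x=\sqrt{a}\cdot\sqrt{a}\,u_x\in L^2(-1,1)$). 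Two small points to tidy up. First, when $\beta_1=0$ or $\gamma_1=0$ the Robin condition degenerates to a Dirichlet condition and there is no boundary term to ``absorb into $\mE$''; instead you must restrict the form domain to functions of $H^1_a(-1,1)$ vanishing at the corresponding endpoint (traces exist in the $(WDeg)$ case since $H^1_a(-1,1)\subset AC([-1,1])$ there). Second, what you actually establish for $A=A_0+\alpha I$ is quasi-dissipativity, i.e.\ that $A-\|\alpha\|_{L^\infty}I$ is dissipative, which is exactly what the generation claim requires; literal dissipativity of $A$ as written in the statement holds only under a sign restriction such as $\alpha\le0$ (the case actually used in the controllability argument), so your shifted formulation is the correct reading rather than a defect.
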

Proposition \ref{str cont} allows us to obtain the following.
\begin{proposition}\label{spectrum}
There exists an increasing sequence $\{\lambda_p\}_{p\in\N},$ with
$\lambda_p\longrightarrow +\infty$ \mbox{ as } $p\rightarrow\infty\,,$
such that the eigenvalues of the operator $(A_0,D(A_0))$
are given by $\{-\lambda_p\}_{p\in\N}$, and the corresponding eigenfunctions $\{\omega_p\}_{p\in\N}$ form a complete orthonormal system in $L^2(-1,1)$.
\end{proposition}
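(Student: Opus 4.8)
The plan is to deduce the statement from the classical spectral theorem for compact self-adjoint operators, the only substantial ingredient being the compactness of the resolvent of $A_0$. First I would observe that $A_0$ is the special case $\alpha\equiv 0$ of the operator $A$ in \eqref{DA}, so Proposition \ref{str cont} already guarantees that $(A_0,D(A_0))$ is closed, densely defined, self-adjoint and dissipative on $L^2(-1,1)$. Integrating by parts and exploiting the boundary conditions built into $D(A_0)$ (the weighted Neumann conditions in the $(SDeg)$ case, where $\lim_{x\to\pm1}a u_x=0$ by Proposition \ref{caratH2}, and the Robin conditions with the sign hypotheses $\beta_0\beta_1\le 0,\ \gamma_0\gamma_1\ge 0$ in the $(WDeg)$ case), the associated quadratic form satisfies $\la -A_0 u,u\ra_{L^2}\ge \int_{-1}^1 a(x)\,u_x^2\,dx=|u|_{1,a}^2\ge 0$; in particular the spectrum of $A_0$ lies in $(-\infty,0]$. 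Consequently, for any fixed $\lambda>0$ the resolvent $R_\lambda:=(\lambda I-A_0)\inv$ is a well-defined, bounded, self-adjoint and positive operator on $L^2(-1,1)$.

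The key step is to show that $R_\lambda$ is compact. By construction $R_\lambda$ maps $L^2(-1,1)$ continuously into $D(A_0)\subset H^1_a(-1,1)$: given $g\in L^2(-1,1)$, the function $u=R_\lambda g$ solves $\lambda u-(au_x)_x=g$ in the sense prescribed by the boundary conditions, and testing against $u$ while using $\la -A_0 u,u\ra_{L^2}\ge |u|_{1,a}^2$ yields $\lambda\|u\|_{L^2}^2+|u|_{1,a}^2\le \|g\|_{L^2}\|u\|_{L^2}$, hence $\|u\|_{1,a}\le C\|g\|_{L^2}$. It then suffices to invoke the compactness of the embedding $H^1_a(-1,1)\hookrightarrow L^2(-1,1)$, one of the functional properties of these weighted Sobolev spaces established in the cited references for both regimes. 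Writing $R_\lambda$ as the composition of the bounded map $L^2(-1,1)\to H^1_a(-1,1)$ with this compact embedding shows that $R_\lambda$ is compact.

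Since $R_\lambda$ is a compact, self-adjoint, positive and injective operator (injective because $\lambda\in\rho(A_0)$) on the separable Hilbert space $L^2(-1,1)$, the spectral theorem for compact self-adjoint operators furnishes a sequence of eigenvalues $\mu_p>0$ with $\mu_p\to 0$ as $p\to\infty$, together with eigenfunctions $\{\omega_p\}_{p\in\N}$ forming a complete orthonormal system in $L^2(-1,1)$; completeness is precisely where the injectivity of $R_\lambda$ enters, since it forces $\ker R_\lambda=\{0\}$. Finally, the eigenrelation $R_\lambda\omega_p=\mu_p\omega_p$ is equivalent to $A_0\omega_p=(\lambda-\mu_p\inv)\omega_p$, so the eigenvalues of $A_0$ are exactly the numbers $-\lambda_p$ with $\lambda_p:=\mu_p\inv-\lambda$. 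Because $\mu_p\to 0^+$ we get $\lambda_p\to+\infty$, and after reordering the $\lambda_p$ in increasing order we obtain the claimed increasing sequence $\{\lambda_p\}_{p\in\N}$ together with its complete orthonormal eigensystem.

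I expect the genuine difficulty to reside entirely in the compact embedding $H^1_a(-1,1)\hookrightarrow L^2(-1,1)$, especially in the strongly degenerate regime: there $H^1_a$ is no longer embedded in $L^\infty$ (cf.\ the Remark following the definition of the spaces) and its elements are only locally absolutely continuous on $(-1,1)$, so the degeneracy of $a$ at $x=\pm1$ must be controlled carefully near the boundary in order to upgrade weak convergence to strong $L^2$ convergence. Everything after that is the standard passage from a compact resolvent to a discrete spectrum accumulating only at $-\infty$ with an orthonormal eigenbasis.
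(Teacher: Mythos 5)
Your argument is correct and takes essentially the same route as the paper: the paper itself does not reprove this proposition but recalls it from \cite{CF2} and \cite{CFproceedings1}, where it is obtained precisely as you do, by combining the self-adjointness and dissipativity of Proposition \ref{str cont} with the compactness of the resolvent (via the compact embedding $H^1_a(-1,1)\hookrightarrow L^2(-1,1)$) and the spectral theorem for compact self-adjoint operators. You also correctly identify the compact embedding as the only nontrivial ingredient; it is indeed established in the cited references on weighted Sobolev spaces (e.g.\ \cite{ACF}) in both the weakly and strongly degenerate regimes.
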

\begin{remark}\label{Legendre}\rm
In the case 
$a(x)=1-x^2$, that is in the case of the Budyko-Sellers model, 
 the orthonormal eigenfunctions of the operator $(A_0,D(A_0))$ are reduced to Legendre's polynomials (see the complete and interesting Section 5.6 in \cite{NK} and Remark 3.2 in \cite{F1}).
%
\end{remark}

\subsubsection{Spaces involving time: ${\mathcal{B}}(Q_T)$ and ${\mathcal{H}}(Q_T)$}

\noindent Given $T>0,$ let us define the Banach spaces:
$$\mathcal{B}(Q_T):=C([0,T];L^2(-1,1))\cap L^2(0,T;H^1_a (-1,1))$$
with the following norm
\begin{equation*}
\label{normaB}
 \|u\|^2_{\mathcal{B}(Q_T)}= \sup_{t\in
[0,T]}\|u(\cdot,t)\|_{L^2(-1,1)}^2
+2\int^T_{0}\int^1_{-1}a(x)u^2_x
dx\,dt\,,
\end{equation*}
and
$$
{\mathcal{H}}(Q_T):=L^{2}(0,T;D(A_0)
)\cap H^{1}(0,T;L^2(-1,1))\cap C([0,T];H^{1}_a(-1,1))
$$
with the following norm 
\begin{multline*}
\label{normaH}
\|u\|^2_{\mathcal{H}(Q_T)}=
 \sup_{
 [0,T]}\left(\|u\|_{L^2(-1,1)}
 ^2+\|\sqrt{a}u_x\|_{L^2(-1,1)}
 ^2\right)\\
 +\int_0^T\left(\|u_t\|_{L^2(-1,1)}
 ^2+\|(au_x)_x\|_{L^2(-1,1)}
 ^2\right)\,dt.
\end{multline*}

In \cite{F1}, for the $(SDeg)$ case, and in \cite{F2}, for the $(WDeg),$ the following embedding lemma for the space $\mathcal{H}(Q_T)$  is obtained.
\begin{lemma}\label{sob3}
Let $\vartheta\geq1.$ 
Then
$
{\mathcal{H}}(Q_T)\subset L^{2\vartheta}(Q_T)
$
\, and 
$$
\|u\|_{L^{2\vartheta}(Q_T)}
\leq c\,
 T^{\frac{1}{2\vartheta}}\,\|u\|_{{\mathcal{H}}(Q_T)},
$$
where $c$ is a positive constant. 
\end{lemma}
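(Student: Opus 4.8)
The plan is to decouple the space–time estimate into a purely spatial embedding $H^1_a(-1,1)\hookrightarrow L^{2\vartheta}(-1,1)$, applied at each fixed time, followed by an integration in $t$ that produces the factor $T^{1/(2\vartheta)}$. The only feature of the space $\mathcal{H}(Q_T)$ I expect to use is that its norm dominates $\sup_{t\in[0,T]}\|u(\cdot,t)\|_{1,a}$, since
$$\sup_{t\in[0,T]}\|u(\cdot,t)\|_{1,a}^2=\sup_{[0,T]}\Big(\|u\|_{L^2(-1,1)}^2+\|\sqrt{a}\,u_x\|_{L^2(-1,1)}^2\Big)\leq \|u\|_{\mathcal{H}(Q_T)}^2 ,$$
so the $L^2(0,T;D(A_0))$ and $H^1(0,T;L^2)$ components will play no role in this particular bound.

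First I would establish the spatial inequality $\|v\|_{L^{2\vartheta}(-1,1)}\leq c\,\|v\|_{1,a}$ for every $v\in H^1_a(-1,1)$, with $c=c(a,\vartheta)$. Since $v\in AC_{loc}(-1,1)$, Cauchy--Schwarz with the weight $a$ gives, for $x>0$ (and symmetrically for $x<0$),
$$|v(x)-v(0)|\leq \left(\int_0^x \frac{ds}{a(s)}\right)^{1/2}\left(\int_0^x a\,v_x^2\,ds\right)^{1/2}\leq |\xi_a(x)|^{1/2}\,|v|_{1,a},$$
hence $|v(x)|\leq |v(0)|+|v|_{1,a}\,|\xi_a(x)|^{1/2}$. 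The base value $|v(0)|$ is controlled by $\|v\|_{1,a}$: because $a$ is continuous and strictly positive it is bounded below on $[-1/2,1/2]$, so $v\in H^1(-1/2,1/2)$ with $\|v_x\|_{L^2(-1/2,1/2)}\leq C\,|v|_{1,a}$, and the one-dimensional embedding $H^1(-1/2,1/2)\hookrightarrow C([-1/2,1/2])$ yields $|v(0)|\leq C\|v\|_{1,a}$. Raising to the power $2\vartheta$ and integrating in $x$,
$$\int_{-1}^1 |v|^{2\vartheta}\,dx\leq C\Big(|v(0)|^{2\vartheta}+|v|_{1,a}^{2\vartheta}\int_{-1}^1|\xi_a(x)|^{\vartheta}\,dx\Big)\leq C\,\|v\|_{1,a}^{2\vartheta},$$
where the last step requires $\xi_a\in L^{\vartheta}(-1,1)$.

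Verifying this integrability is the one genuine obstacle, and it is exactly where the dichotomy between the two regimes enters. In the $(WDeg)$ case $\tfrac1a\in L^1$ forces $\xi_a\in L^\infty\subset L^\vartheta$, so the spatial embedding is in fact a soft consequence of $H^1_a\hookrightarrow L^\infty$. In the harder $(SDeg)$ case $\xi_a$ is unbounded and $H^1_a\not\hookrightarrow L^\infty$, but the structural hypothesis $\xi_a\in L^{q_\vartheta}(-1,1)$ with $q_\vartheta=\max\{\tfrac{1+\vartheta}{3-\vartheta},\,2\vartheta-1\}$ rescues the estimate: for $\vartheta\geq1$ one has $q_\vartheta\geq 2\vartheta-1\geq\vartheta$, whence $L^{q_\vartheta}(-1,1)\subset L^{\vartheta}(-1,1)$ on the bounded interval. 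Thus the failure of the $L^\infty$-embedding in the strongly degenerate case is precisely compensated by the quantitative control of the degeneracy carried by $\xi_a$.

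Finally I would assemble the full estimate. Using $u\in C([0,T];H^1_a)$ and applying the spatial bound at each $t$,
$$\|u\|_{L^{2\vartheta}(Q_T)}^{2\vartheta}=\int_0^T\|u(\cdot,t)\|_{L^{2\vartheta}(-1,1)}^{2\vartheta}\,dt\leq c\int_0^T\|u(\cdot,t)\|_{1,a}^{2\vartheta}\,dt\leq c\,T\,\sup_{t\in[0,T]}\|u(\cdot,t)\|_{1,a}^{2\vartheta}\leq c\,T\,\|u\|_{\mathcal{H}(Q_T)}^{2\vartheta}.$$
Taking the $2\vartheta$-th root gives $\|u\|_{L^{2\vartheta}(Q_T)}\leq c\,T^{1/(2\vartheta)}\,\|u\|_{\mathcal{H}(Q_T)}$ with $c$ depending only on $a$ and $\vartheta$ (and not on $T$ or $u$), which is the claim. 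The time dependence is entirely confined to the $T^{1/(2\vartheta)}$ factor, a point that matters later since it is what will allow the controllability argument to run in arbitrarily small time.
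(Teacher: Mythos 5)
Your proof is correct. Note that there is no in-paper argument to compare against line by line: the paper quotes Lemma \ref{sob3} from \cite{F1} (strongly degenerate case) and \cite{F2} (weakly degenerate case) without reproducing the proof. Your reconstruction is, however, exactly the route the hypotheses are tailored for, and every step checks out: the $\mathcal{H}(Q_T)$-norm dominates $\sup_{t\in[0,T]}\|u(\cdot,t)\|_{1,a}$ by definition; the weighted Cauchy--Schwarz bound $|v(x)-v(0)|\le |\xi_a(x)|^{1/2}\,|v|_{1,a}$ is legitimate for $v\in AC_{loc}(-1,1)$ with $\sqrt{a}\,v_x\in L^2(-1,1)$, since $1/a\in L^1_{loc}(-1,1)$; the control of $|v(0)|$ via the uniform positivity of $a$ on $[-1/2,1/2]$ and the one-dimensional embedding $H^1(-1/2,1/2)\hookrightarrow C([-1/2,1/2])$ is sound; and your observation that $q_\vartheta\ge 2\vartheta-1\ge \vartheta$ for $\vartheta\ge 1$ correctly shows that the standing assumption $\xi_a\in L^{q_\vartheta}(-1,1)$ of $(SDeg)$ delivers the only integrability your argument actually needs, namely $\xi_a\in L^{\vartheta}(-1,1)$ (in the $(WDeg)$ case $\xi_a\in L^\infty$, as you say, so the spatial embedding degenerates to the known $H^1_a\hookrightarrow L^\infty$). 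Integrating the time-slice estimate against the $C([0,T];H^1_a(-1,1))$ component then produces a constant independent of $T$ together with the factor $T^{\frac{1}{2\vartheta}}$, and you rightly flag that this $T$-independence is what the small-time controllability argument later exploits. Two incidental remarks: your proof in fact isolates the minimal hypothesis $\xi_a\in L^\vartheta(-1,1)$, which is consistent with the paper's comment that the lemma ``holds in a more general setting'' than $(SL)$--$(Deg)$; and, cosmetically, one could avoid the interior Sobolev embedding by picking, via the mean value theorem for integrals, a point $x_0\in(-1/2,1/2)$ with $|v(x_0)|\le C\|v\|_{L^2(-1,1)}$ and integrating from $x_0$ instead of $0$ --- but this changes nothing essential.
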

We note that Lemma \ref{sob3} holds in a more general setting than the assumptions $(SL)-(Deg),$ where $\vartheta\in[1,\vartheta_{sup}),$ with $\vartheta_{sup}\in\{3,4\}$.

\subsection{Existence and uniqueness of solutions of semilinear degenerate problems}\label{WD}

In order to study the well-posedness, we represent the semilinear problem $(\ref{PS})$ 
using the following abstract setting in the Hilbert space $L^2(-1,1)$
\begin{equation}\label{operatorNL}
 \left\{\begin{array}{l}
\displaystyle{u^\prime(t)=\left(A_0+\alpha(t) I\right)u(t)
+\phi(u)\,,\qquad  t>0 }\\ [2.5ex]
\displaystyle{u(0)=u_0 \in L^2(-1,1) 
}~,
\end{array}\right.
\end{equation}
where $A_0$ is the operator defined in \eqref{DA0}, $I$ is the identity operator 
 and, for every $u\in {\mathcal{B}(Q_T)},
$ 
the Nemytskii operator associated with the problem $(\ref{PS})$ is defined as
\begin{equation}\label{phimap}
\phi(u)(x,t):=f(x,t ,u(x,t)),\;\;\forall (x,t)\in Q_T.
\end{equation}
In \cite{F1}, for the $(SDeg)$ case, and in \cite{F2}, for the $(WDeg)$ case, the following proposition is proved.
\begin{proposition}\label{loclip}
Let 
$1\leq\vartheta<\vartheta_{sup},$ 
let $f:Q_T\times\R\rightarrow \R$ be a function that satisfies assumption $(SL),$ and let us assume that the conditions $(Deg)$ hold.\\ 
Then
 $\phi:{\mathcal{B}(Q_T)}\longrightarrow L^{1+\frac{1}{\vartheta}}(Q_T),$ defined in \eqref{phimap},
is a locally Lipschitz continuous map 
 and $\phi ({\mathcal{H}(Q_T)})\subseteq L^2(Q_T).$
 \end{proposition}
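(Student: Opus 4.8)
The plan is to deduce the two statements from the structural hypotheses $(SL)$ by extracting pointwise estimates on $f$ and converting them into functional estimates via H\"older's inequality together with the space--time embeddings. Throughout set $p:=1+\tfrac1\vartheta=\tfrac{\vartheta+1}{\vartheta}$, whose H\"older conjugate is $\vartheta+1$; this conjugacy is precisely what makes $L^{1+1/\vartheta}(Q_T)$ the natural target. First I would record the pointwise Lipschitz bound hidden in \eqref{fsigni}: combining its two sides gives $|(f(x,t,u)-f(x,t,v))(u-v)|\le\nu(1+|u|^{\vartheta-1}+|v|^{\vartheta-1})(u-v)^2$, and dividing by $|u-v|$ (the case $u=v$ being trivial) yields
\[
|f(x,t,u)-f(x,t,v)|\le \nu\big(1+|u|^{\vartheta-1}+|v|^{\vartheta-1}\big)\,|u-v|\qquad\text{a.e. }(x,t)\in Q_T.
\]
Together with the Carath\'eodory property of $f$, this already guarantees that $\phi(u)$ is measurable whenever $u\in\mathcal{B}(Q_T)$.

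For the local Lipschitz claim I would raise this pointwise bound to the power $p$ and integrate,
\[
\|\phi(u)-\phi(v)\|_{L^{1+1/\vartheta}(Q_T)}^{p}\le \nu^{p}\int_{Q_T}\big(1+|u|^{\vartheta-1}+|v|^{\vartheta-1}\big)^{p}\,|u-v|^{p},
\]
and then apply H\"older with the conjugate exponents $\tfrac{\vartheta}{\vartheta-1}$ and $\vartheta$ to the two factors. A short computation shows that after this split the first factor contributes only $L^{\vartheta+1}$ norms of $u$ and $v$ (since $(\vartheta-1)\cdot\tfrac{\vartheta+1}{\vartheta-1}=\vartheta+1$) while the second contributes the $L^{\vartheta+1}$ norm of $u-v$. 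Invoking the embedding $\mathcal{B}(Q_T)\hookrightarrow L^{\vartheta+1}(Q_T)$ and the subadditivity of $x\mapsto x^{(\vartheta-1)/(\vartheta+1)}$, I would arrive at
\[
\|\phi(u)-\phi(v)\|_{L^{1+1/\vartheta}(Q_T)}\le \nu\,C\big(1+\|u\|_{\mathcal{B}(Q_T)}^{\vartheta-1}+\|v\|_{\mathcal{B}(Q_T)}^{\vartheta-1}\big)\,\|u-v\|_{\mathcal{B}(Q_T)},
\]
which is exactly local Lipschitz continuity, the prefactor being bounded on any ball of $\mathcal{B}(Q_T)$. The borderline case $\vartheta=1$ is handled directly, since then the prefactor reduces to a constant and one may estimate in $L^2(Q_T)$ using only $\mathcal{B}(Q_T)\subset C([0,T];L^2(-1,1))$.

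For $\phi(\mathcal{H}(Q_T))\subseteq L^2(Q_T)$ I would instead use the growth bound \eqref{Superlinearit} directly: from $|\phi(u)|^2\le\delta_*^2|u|^{2\vartheta}$ one gets $\|\phi(u)\|_{L^2(Q_T)}\le\delta_*\|u\|_{L^{2\vartheta}(Q_T)}^{\vartheta}$, and Lemma \ref{sob3} bounds the right-hand side by $\delta_*\big(c\,T^{1/(2\vartheta)}\|u\|_{\mathcal{H}(Q_T)}\big)^{\vartheta}<\infty$. This part is therefore immediate once Lemma \ref{sob3} is available.

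The main obstacle lies in the embedding invoked in the Lipschitz step. Whereas the inclusion $\phi(\mathcal{H})\subseteq L^2$ rests on the already-stated Lemma \ref{sob3}, the local Lipschitz estimate requires the analogous embedding for the \emph{larger} space, namely $\mathcal{B}(Q_T)\hookrightarrow L^{\vartheta+1}(Q_T)$ for $1\le\vartheta<\vartheta_{sup}$, which is not among the results quoted in the excerpt. This is exactly where the degeneracy and the admissible range of $\vartheta$ enter: in the $(WDeg)$ case it follows easily from $H^1_a(-1,1)\hookrightarrow L^\infty(-1,1)$ interpolated against $C([0,T];L^2(-1,1))$, which permits $\vartheta_{sup}=4$; in the harder $(SDeg)$ case, where no $L^\infty$ control is available, one must rely on a weighted Gagliardo--Nirenberg--Sobolev inequality whose validity is precisely encoded by the hypothesis $\xi_a\in L^{q_\vartheta}(-1,1)$, forcing $\vartheta_{sup}=3$. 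I would therefore either cite this $\mathcal{B}(Q_T)$-embedding from the well-posedness references \cite{F1,F2} or reprove it by interpolating $\sup_{t}\|u(t)\|_{L^2(-1,1)}$ against $\int_0^T\|\sqrt{a}\,u_x\|_{L^2(-1,1)}^2\,dt$.
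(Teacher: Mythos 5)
Your proposal should first be set against the fact that this paper does not actually prove Proposition \ref{loclip}: it quotes it from \cite{F1} (for $(SDeg)$) and \cite{F2} (for $(WDeg)$). That said, the architecture you reconstruct is the intended one, and its two computational ingredients coincide with computations the paper does display elsewhere: the pointwise bound $|f(x,t,u)-f(x,t,v)|\le\nu\big(1+|u|^{\vartheta-1}+|v|^{\vartheta-1}\big)|u-v|$ is exactly the consequence of \eqref{fsigni} that the paper records just before \eqref{lip_psi}, and your proof of $\phi(\mathcal{H}(Q_T))\subseteq L^2(Q_T)$ via \eqref{Superlinearit} and Lemma \ref{sob3} is verbatim the estimate carried out in the proof of Proposition \ref{f in L2}. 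Your H\"older split with conjugate exponents $\tfrac{\vartheta}{\vartheta-1}$ and $\vartheta$, the subadditivity step, and the separate treatment of $\vartheta=1$ are all correct, and you rightly identify that the whole burden falls on the embedding $\mathcal{B}(Q_T)\hookrightarrow L^{\vartheta+1}(Q_T)$, which is precisely where $(Deg)$ and the value of $\vartheta_{sup}$ enter; citing it from \cite{F1}, \cite{F2}, as you offer, is legitimate and is in effect what this paper does.

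The one genuine gap is your claim that in the $(WDeg)$ case this embedding \lq\lq follows easily from $H^1_a(-1,1)\hookrightarrow L^\infty(-1,1)$ interpolated against $C([0,T];L^2(-1,1))$''. Those two pieces of information only give $u\in L^\infty(0,T;L^2(-1,1))\cap L^2(0,T;L^\infty(-1,1))$, and from this pair of norms one can extract at most
\[
\int_{Q_T}|u|^s\,dx\,dt\le\sup_{t\in[0,T]}\|u(\cdot,t)\|^2_{L^2(-1,1)}\int_0^T\|u(\cdot,t)\|_{L^\infty(-1,1)}^{s-2}\,dt,
\]
which requires $s-2\le 2$, i.e. $s\le 4$; this exponent is sharp for the two norms alone, as time-dependent concentration examples show. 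Hence your easy argument covers only $\vartheta\le 3$ and falls short of the $(WDeg)$ range $\vartheta<\vartheta_{sup}=4$, where one needs $u\in L^{\vartheta+1}(Q_T)$ with $\vartheta+1$ up to $5$. Reaching the full range requires replacing the additive bound $\|u\|_{L^\infty}\le C(\|u\|_{L^2}+|u|_{1,a})$ by a multiplicative, weighted Gagliardo--Nirenberg-type inequality adapted to $\tfrac1a\in L^1(-1,1)$, which is the nontrivial content of the embedding in \cite{F2} --- the exact analogue of the role you correctly diagnosed for the hypothesis $\xi_a\in L^{q_\vartheta}(-1,1)$ in the $(SDeg)$ case of \cite{F1}. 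So your proof stands if the embedding is cited, but the proposed self-contained interpolation reproof would fail for $\vartheta\in(3,4)$.
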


Proposition \ref{loclip} justifies the introduction of the following notions of {\it strict solutions} and {\it strong solutions}. 
Such notions are classical in PDE theory, see, for instance, the book \cite{BDDM1}, pp. 62-64 (see also \cite{F1} and \cite{F2}), and the pioneer paper \cite{Fr} by K. O. Friedrichs.

\subsubsection{Strict solutions 
}

In this section we give the notion of solutions of \eqref{PS} with initial state in $H^1_a(-1,1),$ that is, we give the definition of  \textit{strict solutions}, introduced in \cite{F1} for $(SDeg)$ and in \cite{F2} for $(WDeg)$.
\begin{definition}\label{strictsolution}
If 
$u_0\in H^1_a(-1,1),$ u is a \textit{strict solution} 
to \eqref{PS}, 
if $u\in{\mathcal{H}}(Q_T)$ and
$$
\left\{\begin{array}{l}
\displaystyle{u_t-(a(x) u_x)_x =\alpha(x,t)u+ f(x,t,u)\,\quad\quad\, \mbox{a.e.\;\; in } \; Q_T \,:=\,(-1,1)\times(0,T)}\\ [2.5ex]
\displaystyle{
\begin{cases}
\begin{cases}
\beta_0 u(-1,t)+\beta_1 a(-1)u_x(-1,t)= 0 \quad\;\; \mbox{a.e.\;\;} \;t\in (0,T)\, \\
\qquad\qquad\qquad\qquad\qquad\qquad\qquad\qquad\qquad\qquad\qquad\qquad\;\,(\mbox{for } WDeg)\\
\gamma_0\, u(1,t)\,+\,\gamma_1\, a(1)\,u_x(1,t)= 0
 \qquad\quad\,\mbox{a.e.\;\;} t\in (0,T)\,
\end{cases}
\\
\quad a(x)u_x(x,t)|_{x=\pm 1} = 0\,\,\qquad\qquad\qquad\;\;\,\mbox{a.e.\;\;}\, t\in(0,T)\;\;\,\quad(\mbox{for }\, SDeg)\;\;\;
\end{cases}
}\\ [2.5ex]
\displaystyle{u(x,0)=u_0 (x) \,\qquad\qquad\qquad\qquad\quad\qquad\qquad\;\;\;\; \,x\in(-1,1)}~.
\end{array}\right.
$$
%
\end{definition}
\begin{remark}
Since a strict solution $u$ is such that $u\in{\mathcal{H}}(Q_T)\subseteq L^2(0,T; D(A_0)
),$ we have $$u(\cdot,t)\in D(A_0),\;\;\; 
 \text{ for a.e. } t\in(0,T).$$ 
Thus, thanks to the definition of the operator $(A,D(A))$ given in \eqref{DA} and Remark  \ref{DASdeg},  we deduce that the associated 
 boundary conditions hold, for almost every 
 $t\in(0,T)$.
\end{remark}
In Section \ref{AppA} the following existence and uniqueness result for strict solutions is proved (see also the Appendix B of \cite{F1} for $(SDeg),$ and \cite{F2} for $(WDeg)$).
\begin{theorem}\label{exB}
For all $u_0\in H^1_a(-1,1)$ there exists a unique strict solution $u\in{{\mathcal{H}}(Q_T)}$ to \eqref{PS}.
\end{theorem}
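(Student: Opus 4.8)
The plan is to recast \eqref{PS} in the abstract form \eqref{operatorNL} and solve it by a fixed-point argument built on the maximal $L^2$-regularity of the linear part, followed by global a priori estimates that promote a local solution to one living on all of $[0,T]$. The engine of the whole argument is Proposition \ref{str cont}: since $(A_0,D(A_0))$ is self-adjoint and dissipative, $-A_0\geq 0$ and $A_0$ generates an analytic contraction semigroup on $L^2(-1,1)$, while its form domain is precisely $H^1_a(-1,1)$, i.e. $H^1_a(-1,1)=D((-A_0)^{1/2})$. Because $\alpha$ is piecewise static (Definition \ref{piece}), on each time subinterval the generator $A_0+\alpha_k I$ is time-independent and self-adjoint, so one may work interval by interval (equivalently, treat $\alpha(t)I$ as a bounded measurable perturbation producing an evolution family). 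I would first record the linear statement: for every $g\in L^2(Q_T)$ and $u_0\in H^1_a(-1,1)$ the inhomogeneous problem $u'=(A_0+\alpha(t)I)u+g$, $u(0)=u_0$, has a unique solution in $\mathcal{H}(Q_T)$ obeying $\|u\|_{\mathcal{H}(Q_T)}\leq C(\|u_0\|_{1,a}+\|g\|_{L^2(Q_T)})$; here $H^1_a(-1,1)$ is exactly the correct trace space for maximal $L^2$-regularity, which is what makes $C([0,T];H^1_a(-1,1))$ natural.

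First I would establish \emph{local} existence and uniqueness. Define the solution map $\mathcal{F}\colon v\mapsto u$, where $u$ solves the linear problem with inhomogeneity $g=\phi(v)$. By Proposition \ref{loclip}, $\phi$ maps $\mathcal{H}(Q_{T_0})$ into $L^2(Q_{T_0})$ and is locally Lipschitz, so $\mathcal{F}$ is well defined on $\mathcal{H}(Q_{T_0})$. The decisive quantitative input is the explicit factor $T_0^{1/(2\vartheta)}$ in the embedding of Lemma \ref{sob3}: it shows that the $L^2(Q_{T_0})$-norm of the superlinear term $\phi(v)$ is small when $T_0$ is small, so that $\mathcal{F}$ maps a suitable closed ball of $\mathcal{H}(Q_{T_0})$ into itself and contracts it. Banach's fixed-point theorem then yields a unique strict solution on a short interval $[0,T_0]$.

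The core of the proof is the passage from local to \emph{global} existence through a priori bounds that preclude blow-up before time $T$, and here the structure assumptions $(SL)$ are decisive. From \eqref{Superlinearit} one gets $f(x,t,0)=0$, so setting $v=0$ in \eqref{fsigni} produces the one-sided bound $f(x,t,u)\,u\leq \nu u^2$. Multiplying the equation by $u$, integrating over $(-1,1)$, and integrating the diffusion term by parts --- where the boundary terms either carry the favourable sign coming from the Robin sign conditions in $(WDeg)$, or vanish because $a(\pm1)u_x(\pm1)=0$ by Proposition \ref{caratH2} in $(SDeg)$ --- I obtain
\[
\frac{1}{2}\frac{d}{dt}\|u\|_{L^2(-1,1)}^2+\int_{-1}^{1}a\,u_x^2\,dx\leq\big(\|\alpha\|_{L^\infty(Q_T)}+\nu\big)\|u\|_{L^2(-1,1)}^2,
\]
and Gronwall's inequality delivers the $\mathcal{B}(Q_T)$-bound. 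A second, higher-order estimate --- testing against $u_t$ (equivalently against $A_0u$), invoking the time-regularity hypothesis $f_t(x,t,u)\,u\geq -\nu u^2$ together with the growth bound \eqref{Superlinearit} and the embedding $\mathcal{H}(Q_T)\subset L^{2\vartheta}(Q_T)$ of Lemma \ref{sob3} --- controls $\|u_t\|_{L^2(Q_T)}$ and $\|(au_x)_x\|_{L^2(Q_T)}$, hence the full $\mathcal{H}(Q_T)$-norm. Since these bounds do not depend on the length of the local existence interval, a standard continuation argument extends the unique local solution to all of $[0,T]$, which is the assertion.

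I expect the main obstacle to be closing this higher-order estimate in the degenerate, superlinear regime. Because the diffusion degenerates at $x=\pm1$, integration by parts generates boundary contributions that must be shown to vanish or to carry the right sign, which is exactly where the split between the weakly and strongly degenerate cases and Proposition \ref{caratH2} enter. Moreover, since $\vartheta$ may be as large as $\vartheta_{sup}\in\{3,4\}$, the nonlinear term $\int_{Q_T}\phi(u)\,u_t$ (or $\int_{Q_T}\phi(u)\,A_0u$) is genuinely superlinear and can only be absorbed into the dissipative terms thanks to the sharp exponent in Lemma \ref{sob3}, whose admissible range $\vartheta<\vartheta_{sup}$ is tuned precisely so that this absorption succeeds. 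Balancing these nonlinear contributions, rather than the linear fixed-point step, is the delicate part of the argument.
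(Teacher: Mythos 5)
Your proposal is correct and follows essentially the same route as the paper: a Banach fixed point in $\mathcal{H}(Q_T)$ built on the maximal $L^2$-regularity of the analytic semigroup generated by $A_0$ (Proposition \ref{MaxReg}), with the contraction and self-map properties supplied by the small-time factor of Lemma \ref{sob3} via Proposition \ref{f in L2}, then global existence via the a priori estimate of Lemma \ref{esist glob0} and an interval-by-interval iteration over the steps of the piecewise static $\alpha$. The only cosmetic difference is that you sketch the energy proof of the a priori $\mathcal{H}(Q_T)$-bound directly (testing by $u$ and by $u_t$, using \eqref{fsigni} and $f_t u\geq-\nu u^2$), whereas the paper imports it as Lemma \ref{esist glob0} from \cite{F1} and \cite{F2}.
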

The proof of Theorem  \ref{exB}  is given in Section \ref{AppA}.

\subsubsection{Strong solutions 
}
In this section we introduce the notion of solutions when the initial state belongs to $L^2(-1,1).$ These kinds of solutions are called {\it strong solutions} and is defined by approximation of a sequence of strict solutions. 
%
%
\begin{definition}\label{strong}
Let 
$u_0\in L^2(-1,1).$ We say that $u\in\mathcal{B}(Q_T)$ is a \textit{strong solution} of \eqref{PS}, if $u(\cdot,0)=u_0$ and 
there exists a 
 sequence $\{u_k\}_{k\in\N}$ in $\mathcal{H}(Q_T)$ such that, as $k\rightarrow\infty,$ $u_k\longrightarrow u \mbox{  in }  \mathcal{B}(Q_T)$ and, for every $k\in\N$, $u_k$ is the strict solution of 
the Cauchy problem
$$
\;\;\left\{\begin{array}{l}
\displaystyle{u_{kt}-(a(x) u_{kx})_x =\alpha(x,t)u_k+f(x,t,u_k)\,\quad \mbox{ a.e. \, in } \;Q_T:=\,(-1,1)\times(0,T) }\\ [2.5ex]
\displaystyle{
\begin{cases}
\begin{cases}
\beta_0 u_k(-1,t)+\beta_1 a(-1)u_{kx}(-1,t)= 0 \quad\;\; \mbox{\,a.e.\;\;} \;t\in (0,T)\, \\
\qquad\qquad\qquad\qquad\qquad\qquad\qquad\qquad\qquad\qquad\qquad\qquad\;\,(\mbox{for } WDeg)\\
\gamma_0\, u_k(1,t)\,+\,\gamma_1\, a(1)\,u_{kx}(1,t)= 0
 \qquad\quad\,\mbox{a.e.\;\;} t\in (0,T)\,
\end{cases}
\\
\quad a(x)u_{kx}(x,t)|_{x=\pm 1} = 0\,\,\qquad\qquad\qquad\;\;\,\mbox{\;\;a.e.}\;\; t\in(0,T)\quad(\mbox{for } SDeg)\;\;\;
\end{cases}
}\\ [2.5ex]
\end{array}\right.
$$
with initial datum $u_k(x,0).$
\end{definition}
\begin{remark}
Let us consider the sequence of strict solutions of Definition \ref{strong}, $\{u_k\}_{k\in\N}\subseteq\mathcal{H}(Q_T)$ such that, as $k\rightarrow\infty,$ $u_k\longrightarrow u \mbox{  in }  \mathcal{B}(Q_T).$ Thus, it follows that
$u_k(\cdot,0)\longrightarrow u_0 \mbox{  in }  L^{2}(-1,1),$ due to the definition of the $\mathcal{B}(Q_T)-$norm.
\end{remark}
%
%
For the proof of the main results, the next proposition, obtained in \cite{F1} for $(SDeg)$ and in \cite{F2} for $(WDeg),$ will be very useful. 
\begin{proposition}\label{uni}
Let $\alpha\in L^\infty(Q_T)$ a piecewise static function and let $u_0, v_0\in L^2(-1,1).$ 
Let $u,v$ be 
the corresponding 
strong solutions of  \eqref{PS}, with initial data $u_0, v_0$ respectively.
Then, we have
\begin{equation}\label{inedc}
\|u-v\|_{\mathcal{B}(Q_T)}\leq C_T\,\|u_0-v_0\|_{L^2(-1,1)},
\end{equation}
where $C_T=e^{(\nu+\|\alpha^+\|_{\infty})T}$ and $\alpha^+:=\max\{\alpha,0\}$ 
denotes the positive part of $\alpha$.
\end{proposition}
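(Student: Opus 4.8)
The plan is to prove the estimate first for \emph{strict} solutions and then transfer it to strong solutions by the very definition of the latter. Since $u$ and $v$ are strong solutions of \eqref{PS} with the same $\alpha$, by Definition \ref{strong} there are sequences of strict solutions $\{u_k\},\{v_k\}\subseteq\mathcal{H}(Q_T)$ with $u_k\to u$ and $v_k\to v$ in $\mathcal{B}(Q_T)$, hence $u_k(\cdot,0)\to u_0$ and $v_k(\cdot,0)\to v_0$ in $L^2(-1,1)$. Because $u_k-v_k\to u-v$ in $\mathcal{B}(Q_T)$ and the norm is continuous, it suffices to prove \eqref{inedc}, with a constant $C_T$ independent of $k$, for the pairs $u_k,v_k$ and then pass to the limit $k\to\infty$.

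So fix two strict solutions $u,v$ with data in $H^1_a(-1,1)$ and set $w:=u-v\in\mathcal{H}(Q_T)$. Subtracting the two copies of \eqref{PS}, the function $w$ solves $w_t-(aw_x)_x=\alpha\,w+\big(f(x,t,u)-f(x,t,v)\big)$ a.e. in $Q_T$ with the homogeneous (Robin or weighted Neumann) boundary conditions. The regularity $w\in\mathcal{H}(Q_T)$ (so that $w_t,(aw_x)_x\in L^2(Q_T)$) legitimizes multiplying by $w$ and integrating over $(-1,1)$; writing $E(t):=\|w(\cdot,t)\|_{L^2(-1,1)}^2$ this gives, for a.e.\ $t$,
\[
\tfrac12\,E'(t)+\int_{-1}^1 a\,w_x^2\,dx=\big[a\,w_x\,w\big]_{-1}^1+\int_{-1}^1\alpha\,w^2\,dx+\int_{-1}^1\big(f(x,t,u)-f(x,t,v)\big)\,w\,dx.
\]

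The three terms on the right are then controlled in turn. In the $(SDeg)$ case Proposition \ref{caratH2} yields $a(\pm1)w_x(\pm1)=0$, so the boundary term vanishes; in the $(WDeg)$ case the Robin conditions give $[a w_x w]_{-1}^1=-(\gamma_0/\gamma_1)w(1)^2+(\beta_0/\beta_1)w(-1)^2$, and the sign hypotheses $\gamma_0\gamma_1\ge0$, $\beta_0\beta_1\le0$ of $(Deg)$ make both contributions $\le0$ (the relevant trace vanishing in the degenerate cases $\gamma_1=0$ or $\beta_1=0$). For the reaction term, $\int_{-1}^1\alpha\,w^2\le\|\alpha^+\|_{\infty}E(t)$, and for the nonlinear term the one-sided bound in \eqref{fsigni}, namely $\big(f(x,t,u)-f(x,t,v)\big)(u-v)\le\nu(u-v)^2$, gives $\int_{-1}^1\big(f(u)-f(v)\big)w\le\nu\,E(t)$. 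Setting $\mu:=\nu+\|\alpha^+\|_{\infty}$ we arrive at the differential inequality $\tfrac12 E'(t)+\int_{-1}^1 a\,w_x^2\,dx\le\mu\,E(t)$. Discarding the nonnegative gradient term gives $E'\le2\mu E$, so Gronwall yields $E(t)\le e^{2\mu t}E(0)\le C_T^2\,E(0)$ on $[0,T]$, which is the supremum part of the $\mathcal{B}(Q_T)$-norm; reinserting the gradient term via the integrating factor $e^{-2\mu t}$ and integrating over $(0,T)$ bounds $2\int_0^T\!\int_{-1}^1 a\,w_x^2$ by $C_T^2\,E(0)$ as well. Collecting the two pieces controls $\|w\|_{\mathcal{B}(Q_T)}^2$ by $C_T^2\,\|w(\cdot,0)\|_{L^2(-1,1)}^2$ with $C_T=e^{(\nu+\|\alpha^+\|_{\infty})T}$; applying this to $u_k,v_k$ and letting $k\to\infty$ gives \eqref{inedc}.

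The only genuinely delicate point is the boundary term $[a w_x w]_{-1}^1$: one must check that \emph{both} boundary regimes force it to be nonpositive, since this is exactly the purpose of the sign conditions $\beta_0\beta_1\le0$, $\gamma_0\gamma_1\ge0$ in the $(WDeg)$ case and of Proposition \ref{caratH2} in the $(SDeg)$ case. Everything else is a standard $L^2$ energy estimate, where the one-sided Lipschitz upper bound in \eqref{fsigni} (and \emph{only} that bound, not the lower one) absorbs the nonlinearity, followed by Gronwall's lemma and the density passage from strict to strong solutions.
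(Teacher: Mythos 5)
Your overall route --- prove the estimate for strict solutions by an $L^2$ energy estimate plus Gronwall, then transfer it to strong solutions through the approximating sequences in Definition \ref{strong} --- is exactly the standard argument; note that the present paper does not prove Proposition \ref{uni} at all but quotes it from \cite{F1} ($SDeg$) and \cite{F2} ($WDeg$), where the proof runs along the same lines as yours. Your sign analysis of the Robin boundary term, the observation that only the \emph{upper} one-sided bound in \eqref{fsigni} is needed, and the absence of circularity (you use only the definition of strong solution, not the uniqueness that the paper later derives from this very proposition) are all correct.

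Two points, however, need repair. (i) In the $(SDeg)$ case you kill the boundary term by citing Proposition \ref{caratH2}, which gives $a(x)w_x(x)\to 0$ at $\pm 1$; but since $H^1_a(-1,1)\not\hookrightarrow L^\infty(-1,1)$ in the strongly degenerate case, $w(\pm 1)$ need not exist, so ``$aw_x(\pm1)=0$, hence $[aw_xw]_{-1}^1=0$'' is not literally justified. What is needed is $\lim_{x\to\pm1}a\,w_x\,w=0$ for $w\in H^2_a(-1,1)$; this does hold (the derivative $\bigl((aw_x)w\bigr)_x=(aw_x)_xw+aw_x^2$ lies in $L^1(-1,1)$, so the limit exists, and a nonzero limit $L$ would force $aw_x^2\sim |L|/\bigl(2a\,\xi_a\bigr)$ near the endpoint, which is not integrable since $\int \frac{dx}{a\xi_a}=\lim\log\xi_a=\infty$, contradicting $\sqrt a\,w_x\in L^2$), but it must be proved or cited, not inferred from $aw_x\to0$ alone. (ii) Your final ``collecting the two pieces'' step is off by a factor $\sqrt2$: you bound $\sup_t E(t)$ and $2\int_0^T\!\int_{-1}^1 a\,w_x^2$ \emph{each} by $C_T^2E(0)$, so their sum is only $\le 2C_T^2E(0)$, i.e.\ $\|w\|_{\mathcal{B}(Q_T)}\le\sqrt2\,C_T\|w_0\|$. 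In fact, with the $\mathcal{B}(Q_T)$-norm as defined in this paper the bare constant $C_T$ is unattainable: take $f\equiv0$, $\alpha\equiv0$ (so $C_T=1$), Neumann conditions, and $w(t)=e^{-\lambda_p t}\omega_p$; then $\sup_tE(t)+2\int_0^T\!\int a\,w_x^2=2-e^{-2\lambda_pT}>1$. So this defect is inherited from the statement itself (as quoted from \cite{F1}, \cite{F2}); your Gronwall step does yield the exact constant $C_T$ for the $C([0,T];L^2(-1,1))$ part of the norm, which is the only part used in the paper's subsequent applications (Corollary \ref{uni neg} and Step 2 of the proof of Theorem \ref{T1}), but as written your last sentence claims more than your displayed estimates deliver.
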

%
From Proposition \ref{uni} trivially follows Corollary \ref{uni neg}.
\begin{corollary}\label{uni neg}
Let $u_0\in L^2(-1,1), \,\alpha\in L^\infty(Q_T),$ $\alpha$ piecewise static function with $\alpha(x,t)\leq0$ in $Q_T$, and  
let $u$ be 
the corresponding 
strong solution of  \eqref{PS}. 
If $T\in(0,\frac{1}{4\nu}),$ we have
\begin{equation}\label{inedcx}
\|u\|_{C([0,T],L^2(-1,1))}\leq \sqrt{2}\,\|u_0\|_{L^2(-1,1)}. 
\end{equation}
\end{corollary}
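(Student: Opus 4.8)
The plan is to apply Proposition \ref{uni} with the trivial comparison solution $v\equiv 0$. First I would check that the null function really is a strong solution of \eqref{PS} with initial datum $v_0=0$: the growth bound \eqref{Superlinearit} forces $f(x,t,0)=0$ for a.e.\ $(x,t)\in Q_T$, so $v\equiv0$ satisfies the equation $v_t-(a v_x)_x=\alpha v+f(x,t,v)$ together with the Robin (resp.\ weighted Neumann) boundary conditions, and it lies trivially in $\mathcal{H}(Q_T)$. Hence $v\equiv0$ is the strict solution, and a fortiori the strong solution, issuing from $v_0=0$, and it is a legitimate competitor in \eqref{inedc}.

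Next I would exploit the sign hypothesis $\alpha\le 0$. Since $\alpha^+=\max\{\alpha,0\}=0$ a.e.\ in $Q_T$, we have $\|\alpha^+\|_\infty=0$, so the constant in Proposition \ref{uni} collapses to $C_T=e^{(\nu+\|\alpha^+\|_\infty)T}=e^{\nu T}$. Applying the contraction estimate \eqref{inedc} to the pair $(u,v)=(u,0)$ then gives directly $\|u\|_{\mathcal{B}(Q_T)}\le e^{\nu T}\|u_0\|_{L^2(-1,1)}$.

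To conclude I would compare the two norms. By the very definition of the $\mathcal{B}(Q_T)$-norm, its supremum term dominates, so $\|u\|_{C([0,T];L^2(-1,1))}=\sup_{t\in[0,T]}\|u(\cdot,t)\|_{L^2(-1,1)}\le\|u\|_{\mathcal{B}(Q_T)}\le e^{\nu T}\|u_0\|_{L^2(-1,1)}$. Finally, the assumption $T\in(0,\tfrac{1}{4\nu})$ yields $\nu T<\tfrac14$, whence $e^{\nu T}<e^{1/4}<\sqrt2$ (the numerical inequality holding because $\tfrac14<\tfrac12\ln 2$), which is precisely \eqref{inedcx}.

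The argument is essentially immediate once $v\equiv0$ is admitted as a solution, so there is no genuine obstacle; the only two points deserving a word of care are the verification (via \eqref{Superlinearit}) that $f$ vanishes at $u=0$, so that the null function really solves the problem, and the elementary estimate $e^{1/4}<\sqrt2$. In the degenerate case $\nu=0$ the restriction on $T$ is vacuous and $e^{\nu T}=1$, so the bound \eqref{inedcx} holds trivially for every $T>0$.
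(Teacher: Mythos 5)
Your proposal is correct and matches the paper's intended argument: the paper derives Corollary \ref{uni neg} ``trivially'' from Proposition \ref{uni}, i.e.\ exactly by comparing $u$ with the null solution $v\equiv0$ (legitimate since \eqref{Superlinearit} forces $f(x,t,0)=0$), noting $\|\alpha^+\|_\infty=0$ so that $C_T=e^{\nu T}$, and using $e^{\nu T}\leq e^{1/4}<\sqrt{2}$ for $T\in(0,\tfrac{1}{4\nu})$. Your explicit verification of these steps, which the paper leaves unstated, is accurate.
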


Now, we can find the following existence and uniqueness result for strong solutions, given  in \cite{F1} for $(SDeg)$ and in \cite{F2} for $(WDeg)$). 
\begin{theorem}\label{strong th}
For each $u_0\in L^2(-1,1)$ and  $\alpha\in L^\infty(Q_T)$ piecewise static function there exists a unique strong solution 
to \eqref{PS}. 
\end{theorem}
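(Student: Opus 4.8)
The plan is to construct the strong solution by density, reducing everything to the already-established theory of strict solutions (Theorem~\ref{exB}) together with the continuous-dependence estimate of Proposition~\ref{uni}. The key observation is that the bound \eqref{inedc} is a Lipschitz estimate for the control-to-state map, measured in the $L^2(-1,1)$ norm of the datum and the $\mathcal{B}(Q_T)$ norm of the solution; this is exactly what is needed to pass from $H^1_a(-1,1)$ initial data to general $L^2(-1,1)$ initial data by a completion argument in the Banach space $\mathcal{B}(Q_T)$.

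First I would prove existence. Fix $u_0\in L^2(-1,1)$. Since $D(A_0)\subseteq H^2_a(-1,1)\subseteq H^1_a(-1,1)$ and $D(A_0)$ is dense in $L^2(-1,1)$ (part of Proposition~\ref{str cont}), the space $H^1_a(-1,1)$ is dense in $L^2(-1,1)$, so I may choose $\{u_{0,k}\}_{k\in\N}\subseteq H^1_a(-1,1)$ with $u_{0,k}\to u_0$ in $L^2(-1,1)$. By Theorem~\ref{exB}, for each $k$ there is a unique strict solution $u_k\in\mathcal{H}(Q_T)$ of \eqref{PS} with initial datum $u_{0,k}$. Each such $u_k$ is in particular a strong solution (take the constant approximating sequence $u_k\equiv u_k$ in Definition~\ref{strong}, using $\mathcal{H}(Q_T)\subseteq\mathcal{B}(Q_T)$), so Proposition~\ref{uni} applies to any two of them and yields
\[
\|u_k-u_j\|_{\mathcal{B}(Q_T)}\leq C_T\,\|u_{0,k}-u_{0,j}\|_{L^2(-1,1)},\qquad C_T=e^{(\nu+\|\alpha^+\|_{\infty})T}.
\]
Since $\{u_{0,k}\}$ is Cauchy in $L^2(-1,1)$, the family $\{u_k\}$ is Cauchy in $\mathcal{B}(Q_T)$ and hence converges to some $u\in\mathcal{B}(Q_T)$. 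By construction $u$ satisfies Definition~\ref{strong}; moreover, since the $\mathcal{B}(Q_T)$ norm dominates $\sup_{[0,T]}\|\cdot\|_{L^2(-1,1)}$, the initial traces converge, $u_k(\cdot,0)=u_{0,k}\to u_0$, whence $u(\cdot,0)=u_0$.

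Uniqueness is then immediate from Proposition~\ref{uni}: if $u,v$ are two strong solutions of \eqref{PS} with the same datum $u_0$, then $\|u-v\|_{\mathcal{B}(Q_T)}\leq C_T\,\|u_0-u_0\|_{L^2(-1,1)}=0$, so $u=v$. In particular, the limit $u$ built above is independent of the approximating sequence $\{u_{0,k}\}$, so the strong solution is genuinely well defined.

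The analytic content of the theorem has already been absorbed into Proposition~\ref{uni}, whose proof rests on the energy/a~priori estimates for \eqref{PS}; consequently the only genuine points to check here are (i) the density of $H^1_a(-1,1)$ in $L^2(-1,1)$, deduced from the density of $D(A_0)$ in Proposition~\ref{str cont}, and (ii) the fact that strict solutions qualify as strong solutions, so that Proposition~\ref{uni} is applicable to the approximating family. I expect the main (though essentially routine) obstacle to be purely organizational: verifying that the abstract $\mathcal{B}(Q_T)$-limit $u$ satisfies every clause of Definition~\ref{strong}, and in particular matches the initial datum, for which the $\sup$-in-time component of the $\mathcal{B}(Q_T)$ norm is precisely the tool required.
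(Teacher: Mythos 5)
Your proposal is correct and follows essentially the same route as the paper's own proof: approximate $u_0$ by data in $H^1_a(-1,1)$, produce strict solutions via Theorem~\ref{exB}, use the Lipschitz estimate \eqref{inedc} of Proposition~\ref{uni} to get a Cauchy sequence in $\mathcal{B}(Q_T)$, and derive both existence and uniqueness of the strong solution from that same estimate. Your added remarks (density of $H^1_a(-1,1)$ via $D(A_0)$, and the observation that strict solutions are strong solutions so Proposition~\ref{uni} applies) are just explicit bookkeeping of steps the paper leaves implicit.
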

The proof of Theorem \ref{strong th} is showed in appendix in Section \ref{AppA}.
\subsubsection{Some further estimates}
First, we recall the following Lemma \ref{esist glob0}, obtained in \cite{F1} (see Lemma B.2 in Appendix B), for the $(SDeg)$ case, and in \cite{F2}, for the $(WDeg)$ case, in the case of  static reaction $\alpha\in L^\infty(-1,1)$ (a similar argument to that used in Subsection \ref{strictth0} to prove Theorem \ref{exB} permits to extend the following lemma to the case of $\alpha\in L^\infty(Q_T)$ piecewise static function).
\begin{lemma}\label{esist glob0}
Let $\alpha\in L^\infty(Q_T)$ piecewise static function 
 and $u_0\in H^1_a(-1,1).$ 
The strict solution $u\in {\mathcal{H}}(Q_T)$ of system \eqref{PS}, under the assumptions $(SL)$ and $(Deg)$,
satisfies the following estimate
$$\|u\|_{{\mathcal{H}}(Q_{T})}\leq 
c\,e^{k T}\|u_0\|_{1,a},$$
where
$c=c(\|u_0\|_{1,a})
$ and \;
 $k$ are positive constants.
\end{lemma}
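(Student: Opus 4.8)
I would prove the estimate at two energy levels: the zeroth-order $L^2$–energy, which is essentially already available, and a first-order energy obtained by testing the equation against $u_t$. The entire difficulty lies in absorbing the superlinear reaction $f$ without losing global-in-time validity and without a circular appeal to Lemma \ref{sob3}. The crucial dependence $c=c(\|u_0\|_{1,a})$ will come precisely from the superlinear contribution of the initial datum.

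\emph{Step 1 (the $\mathcal{B}(Q_T)$–part).} First I would dispose of $\sup_{[0,T]}\|u\|_{L^2(-1,1)}^2$ and $\int_0^T\int a\,u_x^2$. From \eqref{Superlinearit} with $u=0$ one has $f(x,t,0)=0$, so $v\equiv0$ is the strong solution issued from $v_0=0$; applying Proposition \ref{uni} to the pair $(u,0)$ gives $\|u\|_{\mathcal{B}(Q_T)}\le C_T\|u_0\|_{L^2(-1,1)}$ with $C_T=e^{(\nu+\|\alpha^+\|_\infty)T}$. This controls the sup-of-$L^2$ and the $\int\int a u_x^2$ contributions linearly in $\|u_0\|_{1,a}$; in particular $\int_0^T\|u(\cdot,t)\|_{L^2}^2\,dt\le T\,C_T^2\|u_0\|_{1,a}^2$, which is where the exponential time factor enters.

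\emph{Step 2 (first-order energy by testing with $u_t$).} Since $u\in\mathcal{H}(Q_T)$ we have $u_t,(au_x)_x\in L^2(Q_T)$, so I may multiply the equation by $u_t$ and integrate over $(-1,1)$. Integrating $-\int(au_x)_x u_t$ by parts produces $\tfrac12\frac{d}{dt}\|\sqrt a\,u_x\|_{L^2}^2$ and a boundary term $[a u_x u_t]_{-1}^1$; this term vanishes in $(SDeg)$ by Proposition \ref{caratH2}, while in $(WDeg)$ the Robin conditions together with the sign conditions $\beta_0\beta_1\le0,\ \gamma_0\gamma_1\ge0$ turn it, after integration in time, into a boundary energy of favourable sign at $t=T$ plus the data traces $u_0(\pm1)^2$ (available since $H^1_a\hookrightarrow AC[-1,1]$ in $(WDeg)$). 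The linear reaction is handled by Young's inequality, $\int\alpha u u_t\le\tfrac14\|u_t\|^2+\|\alpha\|_\infty^2\|u\|^2$. The \textbf{main obstacle} is the term $\int_0^T\!\int f(x,t,u)\,u_t$: bounding it crudely would reintroduce $\|u\|_{L^{2\vartheta}(Q_T)}$ and, through Lemma \ref{sob3}, a circular $\|u\|_{\mathcal{H}(Q_T)}^{2\vartheta}$, and would leave open the threat of finite-time blow-up from the growth \eqref{Superlinearit}. Instead I would integrate by parts in time using the primitive $F(x,t,u):=\int_0^u f(x,t,s)\,ds$:
\[
\int_0^T\!\!\int_{-1}^1 f\,u_t\,dx\,dt=\int_{-1}^1 F(x,T,u(x,T))\,dx-\int_{-1}^1 F(x,0,u_0)\,dx-\int_0^T\!\!\int_{-1}^1 F_t\,dx\,dt.
\]
The one-sided bounds in \eqref{fsigni} (taking $v=0$) give $F(x,t,u)\le\tfrac{\nu}{2}u^2$ and $F(x,t,u)\ge-\nu\big(\tfrac12 u^2+\tfrac1{\vartheta+1}|u|^{\vartheta+1}\big)$, while $f_t(x,t,u)u\ge-\nu u^2$ yields $-F_t(x,t,u)\le\tfrac{\nu}{2}u^2$. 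Hence the whole reaction contribution is majorised by $L^2$–norms of $u$ (already controlled in Step 1) plus the single datum term $\|u_0\|_{L^{\vartheta+1}(-1,1)}^{\vartheta+1}$. This structural cancellation is exactly what prevents blow-up and is the origin of $c=c(\|u_0\|_{1,a})$.

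\emph{Step 3 (assembling).} Collecting Step 2 bounds $\sup_{[0,T]}\|\sqrt a\,u_x\|_{L^2}^2$ and $\int_0^T\|u_t\|_{L^2}^2$, hence $\sup_{[0,T]}\|u(\cdot,t)\|_{1,a}^2$, by $c(\|u_0\|_{1,a})\,e^{kT}\|u_0\|_{1,a}^2$, once $\|u_0\|_{L^{\vartheta+1}}$ is dominated by $\|u_0\|_{1,a}$ through the spatial embedding of $H^1_a$ valid under $(Deg)$ (namely $H^1_a\hookrightarrow L^\infty$ for $(WDeg)$, and the embedding underlying Lemma \ref{sob3} for $(SDeg)$, guaranteed by the standing assumption $\xi_a\in L^{q_\vartheta}$). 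It remains to bound $\int_0^T\|(au_x)_x\|_{L^2}^2$; from the equation $(au_x)_x=u_t-\alpha u-f$ this reduces to $\int_0^T\|f\|_{L^2}^2\le\delta_*^2\int_0^T\|u(\cdot,t)\|_{L^{2\vartheta}(-1,1)}^{2\vartheta}\,dt$, which I would estimate by the \emph{spatial} embedding $\|u(\cdot,t)\|_{L^{2\vartheta}}\le C\|u(\cdot,t)\|_{1,a}$ and the already-established bound on $\sup_{[0,T]}\|u(\cdot,t)\|_{1,a}^2$ — deliberately not using the space-time Lemma \ref{sob3} here, to keep the argument non-circular. Assembling the four contributions of $\|u\|_{\mathcal{H}(Q_T)}^2$ and tracking the $e^{kT}$ factors produced by $C_T$ gives $\|u\|_{\mathcal{H}(Q_T)}\le c\,e^{kT}\|u_0\|_{1,a}$ with $c=c(\|u_0\|_{1,a})$, as claimed.
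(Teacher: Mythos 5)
Your proposal is correct, and it reconstructs essentially the argument that the paper relies on: note that the paper does not prove Lemma \ref{esist glob0} in the text, but defers to Lemma B.2 of \cite{F1} for $(SDeg)$ and to \cite{F2} for $(WDeg)$, and those proofs are first-order energy estimates of exactly your type. In particular, the hypothesis in $(SL)$ that $t\mapsto f(x,t,u)$ is locally absolutely continuous with $f_t(x,t,u)\,u\ge-\nu u^2$ exists precisely to run your Step 2: testing with $u_t$ and integrating $\int_0^T\!\int_{-1}^1 f\,u_t$ by parts in time through the primitive $F(x,t,u)=\int_0^u f(x,t,s)\,ds$, and your one-sided bounds $F\le\tfrac{\nu}{2}u^2$, $F\ge-\nu\bigl(\tfrac12 u^2+\tfrac{1}{\vartheta+1}|u|^{\vartheta+1}\bigr)$ and $F_t\ge-\tfrac{\nu}{2}u^2$ all check out, using \eqref{fsigni} with $v=0$ and $f(x,t,0)=0$ from \eqref{Superlinearit}. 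Two points where you genuinely differ from (and slightly improve on) the route indicated in the paper: (i) your estimate treats a piecewise static --- indeed any $L^\infty(Q_T)$ --- coefficient in a single pass, since only $\|\alpha\|_{L^\infty(Q_T)}$ enters, whereas the paper obtains the piecewise static case by iterating the static-coefficient result over the intervals $[t_{k-1},t_k]$ as in Subsection \ref{strictth0}; (ii) you deliberately avoid any circular appeal to Lemma \ref{sob3} or Proposition \ref{f in L2} by using purely spatial embeddings, and your verification is sound: since $q_\vartheta\ge 2\vartheta-1\ge\vartheta$ for $\vartheta\ge1$, the standing assumption $\xi_a\in L^{q_\vartheta}(-1,1)$ gives $\xi_a^{1/2}\in L^{2\vartheta}$, whence $\|u(\cdot,t)\|_{L^{2\vartheta}(-1,1)}\le C\|u(\cdot,t)\|_{1,a}$ in $(SDeg)$ (via $|u(x)|\le|u(0)|+|u|_{1,a}\,\xi_a(x)^{1/2}$), and likewise $\|u_0\|_{L^{\vartheta+1}}\le C\|u_0\|_{1,a}$; in $(WDeg)$ one simply uses $H^1_a\hookrightarrow L^\infty$. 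This also correctly explains the shape of the constant: the superlinear datum terms $\|u_0\|_{1,a}^{\vartheta+1}$ and $\|u_0\|_{1,a}^{2\vartheta}$ are what force $c=c(\|u_0\|_{1,a})$, consistently with the structure displayed in Proposition \ref{f in L2}. The only soft spots are standard justifications you leave implicit: multiplying by $u_t$ and the boundary manipulations at $x=\pm1$ (in particular writing $u_t(\pm1,t)$ and $\tfrac{d}{dt}\,u(\pm1,t)^2$ in the Robin case) are best made rigorous either by Galerkin approximation or by the absolute continuity of $t\mapsto\langle -A_0u(t),u(t)\rangle$ for $u\in L^2(0,T;D(A_0))\cap H^1(0,T;L^2(-1,1))$, the quadratic form of $-A_0$ already containing the favourable boundary contributions $\tfrac{\gamma_0}{\gamma_1}u(1)^2-\tfrac{\beta_0}{\beta_1}u(-1)^2\ge0$ under the sign conditions; this is routine and does not affect the structure of your argument.
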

Using Lemma \ref{sob3}, Proposition \ref{loclip} and Lemma \ref{esist glob0}
we can obtain the following Proposition \ref{f in L2}. 
\begin{proposition}\label{f in L2}
Let 
$\alpha\in L^\infty(Q_T)$ piecewise static function and $u_0\in H^1_a(-1,1).$ 
Let
$u\in {\mathcal{H}}(Q_T)$ the strict solution 
of system \eqref{PS}, under the assumptions $(SL)$ and $(Deg)$. 
Then, the function
$(x,t)\longmapsto f(x,t,u(x,t))$
belongs to $L^2(Q_T)$ and the following estimate holds
$$
\|f(\cdot,\cdot,u)\|_{L^2(Q_T)}\leq Ce^{k \vt T}
\sqrt{T}\|u_{0}\|^{\vt}_{1,a
}\;, 
%
$$
where $C=C(\|u_{0}\|_{1,a}) \text{ and } k$ are positive constants.
\end{proposition}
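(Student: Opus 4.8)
The plan is to obtain the $L^2(Q_T)$ bound on $f(\cdot,\cdot,u)$ by chaining together three ingredients already available: the pointwise growth hypothesis on $f$ from assumption (SL), the Sobolev-type embedding of Lemma \ref{sob3}, and the a priori estimate on strict solutions from Lemma \ref{esist glob0}. No single step is delicate; the only care needed is in tracking how the constants and the powers of $T$ combine to reproduce exactly the factors $\sqrt{T}$ and $e^{k\vartheta T}$ in the claimed bound.

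First I would invoke the growth condition \eqref{Superlinearit}, namely $|f(x,t,u)|\leq \delta_*|u|^\vartheta$, valid for a.e. $(x,t)\in Q_T$ and every $u\in\R$. Setting $u=u(x,t)$ and integrating over $Q_T$ gives
$$\|f(\cdot,\cdot,u)\|_{L^2(Q_T)}^2 \leq \delta_*^2\int_{Q_T}|u(x,t)|^{2\vartheta}\,dx\,dt = \delta_*^2\,\|u\|_{L^{2\vartheta}(Q_T)}^{2\vartheta},$$
so that $\|f(\cdot,\cdot,u)\|_{L^2(Q_T)}\leq \delta_*\,\|u\|_{L^{2\vartheta}(Q_T)}^{\vartheta}$. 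In particular, once the right-hand side is shown to be finite, this already establishes $f(\cdot,\cdot,u)\in L^2(Q_T)$.

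Next I would control $\|u\|_{L^{2\vartheta}(Q_T)}$. Since $u$ is a strict solution, $u\in\mathcal{H}(Q_T)$, so Lemma \ref{sob3} applies and yields $\|u\|_{L^{2\vartheta}(Q_T)}\leq c\,T^{1/(2\vartheta)}\,\|u\|_{\mathcal{H}(Q_T)}$; raising to the power $\vartheta$ turns the factor $T^{1/(2\vartheta)}$ into precisely $T^{1/2}=\sqrt{T}$. Finally, Lemma \ref{esist glob0} bounds the energy norm of the strict solution by $\|u\|_{\mathcal{H}(Q_T)}\leq c\,e^{kT}\|u_0\|_{1,a}$, whose prefactor depends on $\|u_0\|_{1,a}$; raising this to the power $\vartheta$ produces the factor $e^{k\vartheta T}$. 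Collecting everything,
$$\|f(\cdot,\cdot,u)\|_{L^2(Q_T)}\leq \delta_*\,\big(c\,T^{1/(2\vartheta)}\big)^\vartheta\big(c\,e^{kT}\|u_0\|_{1,a}\big)^\vartheta = C\,e^{k\vartheta T}\sqrt{T}\,\|u_0\|_{1,a}^{\vartheta},$$
with $C$ absorbing $\delta_*$ together with the two powers of the embedding and energy constants; since the constant in Lemma \ref{esist glob0} depends on $\|u_0\|_{1,a}$, so does $C$, matching the statement exactly.

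There is no deep obstacle here, as the analytic substance is carried by the previously established lemmas. The only genuine point of attention is the \emph{bookkeeping}: verifying that applying the power $\vartheta$ to $T^{1/(2\vartheta)}$ and to $e^{kT}$ reproduces precisely the claimed $\sqrt{T}$ and $e^{k\vartheta T}$, and that the dependence $C=C(\|u_0\|_{1,a})$ is correctly inherited from Lemma \ref{esist glob0} rather than from the (universal) constant of Lemma \ref{sob3}.
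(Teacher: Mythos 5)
Your proposal is correct and follows essentially the same route as the paper's own proof: the pointwise growth bound \eqref{Superlinearit}, then Lemma \ref{sob3}, then Lemma \ref{esist glob0}, with the powers of $T$ and the exponential combining exactly as you describe. The paper simply carries out the chain at the level of squared norms in one display, so your version is the same argument with the bookkeeping made explicit.
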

\begin{proof} 
Applying the inequality \eqref{Superlinearit}, Lemma \ref{sob3} and Lemma \ref{esist glob0}, we obtain
$$
\int_0^T  \int_{-1}^1   f^2(x,t,u) dx dt\leq \delta_*^2 \int_0^T  \int_{-1}^1   |u|^{2\vt} dx dt\\
\leq 
c
T\|u\|^{2\vt}_{{{\mathcal{H}}(Q_T)}}
\leq 
 Ce^{2k \vt T}
T\|u_{0}\|^{2\vt}_{1,a
}, 
$$
where $c, 
\,C=C(\|u_{0}\|_{1,a}) \text{ and } k$ are positive constants. 
\end{proof}

\section{Proof of the main results}\label{main results}

In this section we prove the main results of this paper. In particular, in Section \ref{Nsec}
%
we prove Proposition \ref{NN}, that is, we find that the solution to \eqref{PS} remains nonnegative at any time when the initial state is nonnegative, regardless of the choice of the multiplicative control $\alpha(x,t)$. 
In Section \ref{control section},
we prove Theorem \ref{T1}, that is, the 
global approximate multiplicative controllability 
 between nonnegative states at any time.\\


In this section, we will use $\|\cdot\|,$ $\|\cdot\|_\infty$ and $\langle\cdot,\cdot\rangle$ 
 instead of
the norms $\|\cdot\|_{L^2(-1,1)}$ and $\|\cdot\|_{L^\infty(Q_T)},$ and the inner product $\langle\cdot,\cdot\rangle_{L^2(-1,1)}$, respectively. 

\subsection{Nonnegative solutions}\label{Nsec}

Before starting with the proof of Proposition \ref{NN} we give a regularity property of the positive and negative part of a function, that will be used in that proof.\\

\noindent Let 
$u:(-1,1)\rightarrow\R$ we consider the positive and negative part functions, respe\-ctively,
$$
u^+(x) := \max\left\{u(x),0\right\},\qquad u^-(x) := \max\left\{0,-u(x)\right\},\qquad
 x\in (-1,1)\,.
$$
Then we have the following equality
$$
u=u^+ -u^- \qquad\quad\quad \mbox{  in    }\,(-1,1)\,.
$$
For the functions $u^+$ and $u^-$  we give the following result of
regularity in weighted Sobolev's spaces, obtained as trivial consequence of a classical result for the usual Sobolev's spaces, that we can find, e.g., in the Appendix $A$ of \cite{KS}.
\begin{proposition}
\label{A.1}
Let 
$u\in H^{1}_a(-1,1),$
then
$
u^+,\,u^-\in H^{1}_a(-1,1).$ 
Moreover, we have 
\begin{equation*}
(u^+)_{x}=
\left\{\begin{array}{l}
{ u_{x}(x) \;\, \mbox{ if }\;\, u(x)>0 
}\\ [2.5ex]
{ 0 \;\;\;\quad\;\;\mbox{ if }\;\, u(x)\leq0
}~
\end{array}\right.
\quad\mbox{ and }\quad
(u^-)_{x}
=\left\{\begin{array}{l}
{ -u_{x}(x) \;\;\; \mbox{ if }\;\, u(x)<0 
}\\ [2.5ex]
{ 0 \,\,\qquad\,\,\,\;\; \mbox{ if }\;\, u(x)\geq0
 }~.
\end{array}\right.
\end{equation*}
\end{proposition}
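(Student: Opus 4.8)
The plan is to reduce this weighted statement to the corresponding classical result in unweighted Sobolev spaces by a localization argument, exploiting the fact that the diffusion coefficient $a$ degenerates only at the endpoints $x=\pm1$. Fix any compact subinterval $[c,d]\subset(-1,1)$. Since $a\in C([-1,1])$ and $a>0$ on $(-1,1)$, the coefficient attains a positive minimum on $[c,d]$, so there is a constant $m_{c,d}>0$ with $a(x)\geq m_{c,d}$ for all $x\in[c,d]$; hence for $u\in H^1_a(-1,1)$ we have $u_x=\frac{1}{\sqrt a}\,(\sqrt a\,u_x)\in L^2(c,d)$, so that $u\in H^1(c,d)$ in the ordinary (unweighted) sense. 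In other words, $H^1_a(-1,1)$ and $H^1(c,d)$ coincide locally in the interior.

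Next I would invoke the classical result for usual Sobolev spaces (e.g. the Appendix $A$ of \cite{KS}): if $u\in H^1(c,d)$, then $u^+,u^-\in H^1(c,d)$ and their weak derivatives are given exactly by the stated piecewise formulas, namely $(u^+)_x=u_x$ on $\{u>0\}$ and $0$ on $\{u\leq0\}$, and symmetrically for $u^-$. Because these identities are purely local and $[c,d]\subset(-1,1)$ was arbitrary, the derivative formulas in the statement hold a.e.\ on the whole interval $(-1,1)$.

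The final step is to verify that $u^+$ and $u^-$ satisfy the three requirements defining membership in $H^1_a(-1,1)$. First, $|u^\pm|\leq|u|$ pointwise, so $u^\pm\in L^2(-1,1)$. Second, the map $t\mapsto t^+=\max\{t,0\}$ is globally Lipschitz with constant one; composing an absolutely continuous function with a Lipschitz function preserves absolute continuity, so $u^+=(\cdot)^+\circ u$ inherits the $AC([-1,1])$ regularity in the $(WDeg)$ case and the $AC_{loc}(-1,1)$ regularity in the $(SDeg)$ case, and likewise for $u^-$. Third, using the derivative formula just established we have the pointwise bound $|\sqrt a\,(u^+)_x|=|\sqrt a\,u_x|\,\uno_{\{u>0\}}\leq|\sqrt a\,u_x|$ a.e.\ on $(-1,1)$, and the analogous bound for $u^-$; since $\sqrt a\,u_x\in L^2(-1,1)$ by hypothesis, it follows that $\sqrt a\,(u^\pm)_x\in L^2(-1,1)$. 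Combining these three facts yields $u^+,u^-\in H^1_a(-1,1)$.

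The only delicate point is the behaviour near the degeneracy points $x=\pm1$: the localization in the first step fails there, so one cannot directly transplant the global unweighted result up to the boundary. This is precisely why the third verification is carried out through the pointwise domination $|\sqrt a\,(u^\pm)_x|\leq|\sqrt a\,u_x|$, which holds everywhere on $(-1,1)$ including arbitrarily close to the endpoints and therefore secures global square-integrability of the weighted derivative without any separate endpoint analysis.
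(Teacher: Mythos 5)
Your proof is correct and takes essentially the same route as the paper, which derives Proposition \ref{A.1} as a direct consequence of the classical unweighted result in Appendix A of \cite{KS}: your localization to compact subintervals (where $a$ is bounded below) plus the pointwise domination $|\sqrt{a}\,(u^\pm)_x|\leq|\sqrt{a}\,u_x|$ is exactly the reduction the paper leaves implicit when it calls the statement a ``trivial consequence''. Your explicit check of the absolute continuity of $u^\pm$ via composition with the Lipschitz map $t\mapsto t^+$ simply fills in the details of that same argument.
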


Now, we can give the proof of Proposition \ref{NN}.
\begin{proof}{(of Proposition \ref{NN}).}\\
\underline{Case 1:} \textit{$u_0\in H^1_a(-1,1).$}\\
Firstly, let us prove Proposition \ref{NN} under the further assumption $u_0\in H^1_a(-1,1).$\\ So, we can note that the corresponding unique solution $u(x,t)$ is a {\it strict solution}, that is 
$$u\in{\mathcal{H}}(Q_T)=L^{2}(0,T;D(A_0)
)\cap H^{1}(0,T;L^2(-1,1))\cap C([0,T];H^{1}_a(-1,1)).$$ 
We denote with $u^+$ and $u^-$ the positive and negative part of $u$, respectively. Since $u=u^+-u^-,$ thus
it is sufficient to prove that $u^-(x,t)=0, \; \mbox{for a.e. in } Q_T\,.$
Multiplying by
$u^{-}$ 
 both sides of the equation in $(\ref{PS})$
and integrating on $(-1,1)$
we obtain
\begin{equation}
\label{4.2}
\int^1_{-1} u_{t} u^- dx= \int^1_{-1}\left[(a(x)u_{x})_x u^{-}+\alpha u
u^{-}+f(x,t,u) u^{-}\right]dx.
\end{equation}
We start with the estimation of the terms of the second member in \eqref{4.2}.\\
Integrating by parts and recalling that $u^-(\cdot,t)\in H^1_a (-1,1), \mbox{ for every } t\in (0,T),$ 
using Proposition \ref{A.1} we
deduce
\begin{align}\label{prpart}
\int^1_{-1} (a(x)u_{x})_x u^-\,dx &=[a(x)u_{x} u^-]^1_{-1} - \int^1_{-1}
a(x)u_{x}(u^-)_x\,dx\\
&=[a(x)u_{x} u^-]^1_{-1} + \int^1_{-1}
a(x)u^2_{x}\,dx\,.\nonumber
\end{align}
If $\beta_1\gamma_1\neq0,$ keeping in mind the boundary conditions, for $t\in(0,T)$ we have
\begin{multline}\label{bdterm}
[a(x)u_{x} u^-]^1_{-1}=a(1)u_{x}(1,t) u^-(1,t)-a(-1)u_{x}(-1,t) u^-(-1,t)\\
=-\frac{\gamma_0}{\gamma_1}(u^+(1,t)-u^-(1,t))u^-(1,t)
+\frac{\beta_0}{\beta_1}(u^+(-1,t)-u^-(-1,t))u^-(-1,t)\\
=\frac{\gamma_0}{\gamma_1}(u^-(1,t))^2
-\frac{\beta_0}{\beta_1}(u^-(-1,t))^2\geq0.
\end{multline}
Thus, including also the simple case $\beta_1\gamma_1=0,$ from \eqref{prpart} and \eqref{bdterm} we obtain
\begin{equation}\label{a(x)}
\int^1_{-1} (a(x)u_{x})_x u^-\,dx\geq0.
\end{equation}
We also have the following equality
\begin{equation}\label{alfa}
\int^1_{-1}\alpha u u^- dx = -\int^1_{-1}\alpha(u^-)^2 dx,
\end{equation}
moreover, using 
(\ref{fsigni})
we have
\begin{multline}\label{NL}
\int^1_{-1} f(x,t,u) u^-\,dx=\int^1_{-1} f(x,t,u^+-u^-) u^-\,dx
=\int^1_{-1} f(x,t,-u^-) u^-\,dx\\=-\int^1_{-1} f(x,t,-u^-) \left(-u^-\right)\,dx
\geq -\int^1_{-1}  \nu
\left(-u^-\right)^2\,dx=-\int^1_{-1}  \nu
\left(u^-\right)^2\,dx\,.
\end{multline}
We can compute the first member of the equation in \eqref{4.2} in the following way
$$\int^1_{-1}u_{t} u^- dx = \int^1_{-1} (u^+ - u^-)_t u^- dx =
-\int^1_{-1} (u^-)_t u^- dx = -\frac{1}{2} \frac{d}{dt} \int
(u^-)^2 dx\,.$$
Applying to (\ref{4.2})  the last equality and \eqref{a(x)}-\eqref{NL} we have
$$
\frac{1}{2}\frac{d}{dt}\int^1_{-1}(u^-)^2 dx\leq  \int^1_{-1}\left(\alpha(x,t)+\nu
\right) (u^-)^2
dx\leq \left(\|\alpha\|_\infty+\nu
\right)\int^1_{-1}(u^-)^2 dx,
$$
so by
Gronwall's Lemma since $u_0^-(x)\equiv0$ 
 we obtain
$$
\int^1_{-1}(u^-(x,t))^2 dx\leq
e^{2(\nu+\|\alpha\|_\infty) T}\,\,\int^1_{-1}(u^-(x,0))^2dx=0,\;\;\,\;
\forall t\in(0,T).
$$
Therefore,
\begin{equation}\label{preGW}
u^-(x,t)=0 \qquad \forall (x,t)\in Q_T,
\end{equation}
that proves Proposition \ref{NN} in the case $u_0\in H^1_a(-1,1).$\\

\noindent \underline{Case 2:} \textit{$u_0\in L^2(-1,1).$}\\ 
If $u_0\in L^2(-1,1),$
 $
u_0\geq0 \mbox{ a.e. } x\in(-1,1),$ there exists $\{u_{0k}\}_{k\in\N}\subseteq C^\infty([-1,1]),$ such that $u_{0k}\geq0$ 
on $(-1,1)$ for every $k\in\N,$ and $u_{0k}\longrightarrow u_0$ in $L^2(-1,1),$ as $k\rightarrow\infty.$
%
For every $k\in\N,$ we consider $u_k\in{\mathcal{H}}(Q_T)$ the strict solution to $(\ref{PS})$  with initial state $u_{0k}$. 
For the well-posedness there exists $u\in{\mathcal{B}}(Q_T)$ such that $u_k\longrightarrow u \mbox{ in }{\mathcal{B}}(Q_T),$ as $k\rightarrow\infty.$ This convergence implies that
there exists 
$\{u_{k_p}\}_{p\in\N}\subseteq\{u_{k}\}_{k\in\N}$ such that, as $p\rightarrow\infty,$
\begin{equation}
\label{estratta}
u_{k_p}(x,t)\longrightarrow u(x,t),\quad \mbox{ a.e. } (x,t)\in Q_T.
\end{equation}
For every $p\in\N,$ we can apply the Case 1 to the system \eqref{PS} with initial datum $u_{0k_p},$ so 
by \eqref{preGW} we deduce
$$
u_{k_p}(x,t)\geq0,\quad \mbox{ a.e. } (x,t)\in Q_T,
$$
thus from the convergence \eqref{estratta} the conclusion that follows is
$$u(x,t)\geq 0, \quad\mbox{ a.e. } (x,t)\in Q_T.$$
\end{proof}

\subsection{Nonnegative controllability}\label{control section}
%
In this section, let us give the proof of
Theorem \ref{T1}.

\begin{proof}{(Proof of Theorem \ref{T1}).}

\noindent Let us fix $\ve>0$. 
Since $u_{0},\,u^*\in L^2(-1,1),$ there exist
$u^\ve_{0},\,u^*_\ve \in C^1([-1,1])$ 
 such that 
 \begin{equation}\label{5.1}
u^\ve_0,\;u^*_\ve>0\;\: \text{ on }\,[-1,1],\;\;
\parallel u^*_\ve-u^*
 \parallel
 <\frac{\ve}{4} \;\text{  and  } \; \parallel u_{0}^\ve-u_{0} \parallel
<
 \frac{\sqrt{2}}{36 S_\ve e^{\nu}}\ve\,, 
\end{equation}
where $\nu$ is the nonnegative constant of assumptions $(SL)$ and
\begin{equation}\label{Sve}
\displaystyle S_\ve
:=
\max
_{x\in [-1,1]
}
\Big\{\frac{u^*_\ve (x)}{u_{0}^\ve (x)}\Big\}+1\,.
\end{equation} 
From  \eqref{5.1} and \eqref{Sve} follows
\begin{equation}\label{ass step}
\exists\, \eta^*>0\,:
\; \;\eta^*\leq
\frac{u^*_\ve
(x)}{S_\ve u^\ve_{0} (x)}
\leq 1, \;\;\qquad \forall x \in[-1,1]\,.
\end{equation} 

The strategy of the proof consists of two control actions: in the first step we steer the system from the initial state $u_0$ to the intermediate state $S_\ve u^\ve_0,$ then in the second step we drive the system from this to $u^*_\ve.$ In the second step, the condition \eqref{ass step} will be crucial, that is justify the choice of the intermediate state $S_\ve u^\ve_0.$

%

\noindent {\bf Step 1:} {\it Steering the system from $u_{0}
$ to $S_\ve u^\ve_{0}.$}
Let us choose the positive constant bilinear control 
\begin{equation*}
\alpha (x,t) = 	\alpha_1 := \frac{\log S_\ve}{T_1}>0, \;\;\;\;\; (x,t) \in (-1,1)\times(0, T_1), \;\;\;\text{ for some } T_1>0.
\end{equation*} 
Let us denote by $u^\ve(x,t)$ and $u(x,t)$  
the strict and strong solution 
of \eqref{PS} with initial state $u_0^\ve$ and $u_0$, respectively. 
So, keeping in mind the abstract formulation \eqref{operatorNL} for the problem \eqref{PS}, the Duhamel's principle and the Proposition \eqref{loclip}, the strict solution $u^\ve(x,t)$ is given, in terms of a Fourier series approach, by
\begin{equation}\label{w conv}
u^\ve (x,T_1)
= \; 
e^{\alpha_1 T_1}\,\sum_{p = 1}^\infty   e^{-\lambda_p T_1} 
\langle u^\ve_0,\omega_p\rangle
   \omega_p(x)
+ R_\ve(x,T_1)\,,
\end{equation}
with \;$\displaystyle R_\ve(x,T_1):= \sum_{p = 1}^\infty   \left[\int_0^{T_1} e^{(\alpha_1 -\lambda_p ) (T_1 -t)} 
\langle 
f (\cdot,t,u^\ve(\cdot, t)),\omega_p\rangle
dt
  \right]\omega_p(x)
,$\\ where
$\{-\lambda_p\}_{p\in\N}$
are the eigenvalues of the operator $(A_0,D(A_0)),$ defined in \eqref{DA0},
and $\{\omega_p\}_{p\in\N}$ are the corresponding eigenfunctions, 
 that form a complete orthonormal system in $L^2(-1,1),$ see Proposition \ref{spectrum}. 
 We recall that  the eigenvalues of the operator $(A,D(A)),$ with $Au=A_0u+\alpha_1u$ (defined in \eqref{DA}) are obtained from the eigenvalues of the operator $(A_0,D(A_0))$ by shift, that is we have $\{-\lambda_p+\alpha_1\}_{p\in\N},$ and the corresponding orthonormal system in $L^2(-1,1)$ of eigenfunctions is the same as $(A_0,D(A_0))$, that is 
 $\{\omega_p\}_{p\in\N}$.
 
By the strong continuity of the semigroup, see Proposition \ref{str cont}, 
we have that
$$
 \sum_{p = 1}^\infty   e^{-\lambda_p T_1} 
  \langle u^\ve_0,\omega_p\rangle \omega_p(x)\longrightarrow u^\ve_0
   \; \text{ in } L^2(-1,1) \;\;  \text{ as }\;\; T_1\rightarrow0.
 $$
 So, there exists a small time $\overline{T}_1\in(0,1)
$ such that 
 \begin{equation}\label{R}
\left\| 
S_\ve\sum_{p = 1}^\infty   e^{-\lambda_p T_1} 
\langle u^\ve_0,\omega_p\rangle\omega_p(\cdot)-S_\ve u^\ve_0(\cdot)\right\|<\frac{\ve 
}{8},\qquad \forall\: T_1\in(0,\overline{T}_1].
\end{equation}
Since $u^\ve$  is a strict solution, by Proposition \ref{f in L2} we have $f(\cdot,\cdot,u^\ve(\cdot,\cdot))\in L^2(Q_T),$ 
then using also H\"older's inequality and Parseval's identity we deduce
\begin{align}\label{H}
\|&R_\ve(x,T_1)\|^2
=
 \sum_{p = 1}^\infty \left| \int_0^{T_1} e^{(\alpha_1 -\lambda_p ) (T_1 -t)}
 \langle 
f (\cdot,t,u^\ve(\cdot, t)),\omega_p\rangle
 dt
\right|^2
\\
&\leq  \,
 \sum_{p = 1}^\infty \left( \int_0^{T_1}  e^{2(\alpha_1 -\lambda_p ) (T_1 -t)}dt
\right)\int_0^{T_1}\left|
\langle 
f (\cdot,t,u^\ve(\cdot, t)),\omega_p\rangle
\right|^2\!dt 
\nonumber\\
 &\leq
 e^{2\alpha_1T_1}T_1
\int_0^{T_1}\sum_{p = 1}^\infty \left|\langle 
f (\cdot,t,u^\ve(\cdot, t)),\omega_p\rangle
\right|^2\!dt 
\nonumber\\
&=S_\ve^2 
T_1 
\int_0^{T_1} \|f (\cdot,t,u^\ve(\cdot, t))\|^2
dt 
\leq 
 CS_\ve^2 e^{2k \vt T_1}
T_1^2\|u^\ve_{0}\|^{2\vt}_{1,a
}\,,\nonumber
\end{align}
where $C=C(\|u^\ve_{0}\|_{1,a
})$ and $k$ are the positive constants introduced in the statement of Proposition \ref{f in L2}.\\
Then there exists $T^*_1\in (0, \overline{T}_1]$ such that
\begin{equation}\label{small}
\sqrt{C}S_\ve e^{k \vt T_1}
T_1\|u^\ve_{0}\|^{\vt}_{1,a}
< \frac{\sqrt{2}}{36}\ve
,\;\;\qquad\forall\: T_1\in(0,T^*_1].
\end{equation}
Using Proposition \ref{uni}, by \eqref{w conv}-\eqref{small} and keeping in mind \eqref{5.1}, for every\\ $T_1\in(0,T^*_1],$
we obtain 
\begin{multline}\label{5.10}
\|u (\cdot, T_1) - S_\ve u^\ve_0 (\cdot)\|
\leq \|u (\cdot, T_1) - u^\ve (\cdot, T_1)\|
+\|u^\ve (\cdot, T_1) - S_\ve u^\ve_0 (\cdot)\|\\
\leq e^{(\nu+\|\alpha_1^+\|_\infty)T_1}\|u_0-u_0^\ve\|+ \left\| 
S_\ve\,\sum_{p = 1}^\infty   e^{-\lambda_p T_1} 
\langle u^\ve_0,\omega_p\rangle
   \omega_p(\cdot)-S_\ve u_0^\ve(\cdot)\right\|
+\| R_\ve(\cdot,T_1)\|\\
\leq e^{(\nu+\|\alpha_1^+\|_\infty)} \frac{\sqrt{2}}{36 S_\ve e^{\nu}}\ve+
\frac{\sqrt{2}}{36}\ve+\sqrt{C}S_\ve e^{k \vt T_1}
T_1\|u^\ve_{0}\|^{\vt}_{1,a
}<\frac{\sqrt{2}}{12}\ve, 
\end{multline}
where $\nu\geq0$ is given in assumption (SL).\\ 
Let us set 
\begin{equation}\label{sigma_0e}
\sigma_0^\ve(x):=u(x, T_1) - S_\ve u^\ve_0 (x),\end{equation}
we note that by \eqref{5.10} we have
\begin{equation}\label{sigma}
\|\sigma_0^\ve\|<\frac{\sqrt{2}}{12}\ve\,.
\end{equation}
{\bf Step 2:} {\it Steering the system from $S_\ve u^\ve_{0}+\sigma_0^\ve
$ to $u^*$ at $T\in(0,T^*],$ for some $T^*>0.$}
In this step let us restart at time $T_1$ from the initial state $S_\ve u^\ve_{0}+\sigma_0^\ve$ and our goal is to steer the system arbitrarily close to $u^*.$ 
Let us consider
\begin{equation}\label{ave}
\alpha_\ve (x) \; := \;  \left\{ \begin{array}{ll}
\log \left( \frac{u^*_\ve
(x)}{S_\ve u^\ve_{0} (x)}
 \right) \;\;\;\; &  {\rm for } \;\; x \neq  \pm1, 
   \\ 
0 \;\;\;\; &  {\rm for} \;\; x =  \pm1, 
\end{array}
\right.  
\end{equation} 
thus by \eqref{ass step} we deduce that $\alpha_\ve\in L^\infty(-1,1)$ and $\alpha_\ve(x)\leq0\;\:\text{ for a.e. } x\in [-1,1].$ So, 
there exists a sequence 
$\displaystyle \{ \alpha_{\ve j}\}_{j \in \N}\subset C^2( [-1, 1])$ such that 
\begin{equation}\label{a1}
\alpha_{\ve j} (x) \leq 0\;\; \forall x\in [-1,1],\; \;\alpha_{\ve j} (\pm1)=0, \;\alpha_{\ve j}\rightarrow \alpha_{\ve }\;{\rm in} \; L^2 (-1,1) \; {\rm as} \; j \rightarrow \infty,
\end{equation}
and the following further conditions hold
\begin{equation}\label{Ass alpha}
\displaystyle \lim_{x\rightarrow\pm1}\frac{\alpha_{\ve j}^\prime(x)}{a(x)}=0\;\; \text{ and }\;\;\lim_{x\rightarrow\pm1}\alpha_{\ve j}^\prime(x)a'(x)=0.
\end{equation}
%
%
Since, from \eqref{a1} we deduce 
$$
e^{\alpha_{\ve j} (x)}S_\ve u^\ve_{0} (x) 
 \; \longrightarrow  \;
e^{\alpha_\ve (x)} S_\ve u_{0}^\ve(x)
\; = \;u^*_\ve (x) \;\;{\rm in} \;\; L^2 (-1,1) \;\; {\rm as} \; j \rightarrow \infty,
$$
there exists $j^*\in\N$ such that for every $j\in\N,\:j\geq j^*$ we have
\begin{equation}\label{a2}
\|e^{\alpha_{\ve j}}S_\ve u^\ve_{0}-u_\ve^*\|<\frac{\ve}{12},\qquad\qquad  \forall j\in\N \quad \text{with} \quad\: j\geq j^*\,.
\end{equation}
Let us fix an arbitrary $j\in\N$ with $j\geq j^*,$ and let us choose as control
the following static multiplicative function
\begin{equation}\label{control2}
\alpha (x,t) \; := \; \frac{1}{T-T_1} \alpha_{\ve j} (x)\leq0     \;\;\;\:\; \forall (x,t) \in \widetilde{Q}_T:=(-1,1)\times(T_1,T),
\end{equation}
and we call $u^\sigma(x,t)$ the unique strong solution that solves the problem \eqref{PS} with the following changes:
\begin{itemize}
\item time interval $(T_1,T)$ instead of $(0,T);$ 
\item multiplicative control given by \eqref{control2};
\item initial condition $u^\sigma(x,T_1)=S_\ve u^\ve_0 (x)+\sigma_0^\ve(x).$
\end{itemize}
Let us also denote with $u(x,t)$ the unique strict solution of the following problem
 \begin{equation}\label{PSmod}
\left\{\begin{array}{l}
\displaystyle{u_t-(a(x) u_x)_x=
\frac{\alpha_{\ve j} (x)}{T-T_1} u+ f(x,t,u)\;\;\;\; \mbox{ in } \; \widetilde{Q}_T:=(-1,1)\times(T_1,T)
}\\ [2.5ex]
\displaystyle{ B.C.
%
%
}\\ [2.5ex]
\displaystyle{u(x,T_1)=S_\ve u^\ve_0 (x),\qquad\qquad\qquad\qquad\qquad\quad\,\;\;\;\: x\in (-1,1)
}
~.
\end{array}\right.
\end{equation}
%
For \text{a.e.} $x
\in (-1,1),$ from the equation 
$$u_t(\cdot,t)=\frac{\alpha_{\ve j}(\cdot)}{T-T_1}u(\cdot,t)+\left(\left(a(\cdot)u_{x}(\cdot,t)\right)_x +  
f(\cdot,t,u)\right)\;\;\;\;\quad t\in (T_1,T),$$
 by the classical variation constants technique, we can obtain a representation formula of the solution $u(x,t)$ of $\eqref{PSmod}$, that computed at time $T,$
 for every $x\in(-1,1),$ becomes 
\begin{equation*}
u(x,T) \; = \; 
e^{\alpha_{\ve j}(x)}S_\ve u_{0}^\ve(x) \; + \int_{T_1}^T e^{\alpha_{\ve j} (x) \frac{(T - \tau)}{T}} \big((a(x)u_{x})_x (x, \tau) + f (x,\tau, u(x,\tau))\big) d\tau\,.
\end{equation*}
Let us show that 
$u(\cdot, T) \longrightarrow u^*_\ve 
$ in $L^2 (-1,1),$ as $ T \rightarrow {T_1}^+.$\\
Since $ \alpha_{\ve j} (x)\leq 0$ let us note that by the above formula, using H\"older's inequality and \eqref{a2},  we deduce 
\begin{multline}\label{uT}
\!\!\!\!\!\!\|u (\cdot,T)-
u^*_\ve (\cdot)\|^2
\leq 2\|e^{\alpha_{\ve j}(x)}S_\ve u_{0}^\ve(x)-u^*_\ve\|^2\\
+2\int_{-1}^1 \left(\int_{T_1}^T e^{\alpha_{\ve j} (x)
\frac{(T - \tau)}{T}} ((a(x)u_{x})_x (x, \tau) + f (x,\tau,u(x,\tau))) d \tau \right)^2 \!\!\!dx\\ \;\leq  \frac{\ve^2}{72}\;+\:
\left(T-T_1\right) \parallel  \left(a(\cdot)u_{x}\right)_x  + f (\cdot,\cdot,u) \parallel^2_{L^2 (\widetilde{Q}_T)}.
\end{multline}
In the final Step. 3, as an appendix to this proof, we will prove that the norm at right-hand side of (\ref{uT}) is bounded 
 as $ T \rightarrow {T_1}^+$. Precisely, we will find 
 the following 
 \begin{equation}\label{bound}
  \parallel  \left(a(\cdot)u_{x}\right)_x  + f (\cdot,\cdot,u) \parallel_{L^2 (\widetilde{Q}_T)}\leq  K, \qquad \text{ for a.e.} \; 
    T\,\in (T_1, T_1+1),
 \end{equation}
 where $K=K(u_0^\ve,\,u_\ve^*,\,\|u_0^\ve\|_{1,a})$ is a positive constant.
 
 Thus, from \eqref{uT} and \eqref{bound} there exists $T_2\in (T_1, T_1+1)$ such that for every $T\in (T_1, T_2)$ we have
  \begin{equation}\label{bound2}
  \!\!\!\!\!\!\|u (\cdot,T)-
u^*_\ve (\cdot)\|<\frac{\ve}{6}.
   \end{equation}
 Then, using Corollary \ref{uni neg}, from \eqref{5.1}, \eqref{bound2} and \eqref{sigma}, there exists\\
  $T^*\in (T_1,\min\{T_2,T_1+\frac{1}{4\nu}\})$ such that for every $T\in(T_1,T^*]$ we obtain 
\begin{multline*}
\|u^\sigma(\cdot,T) -u^*(\cdot)\|\leq 
\|u^\sigma(\cdot,T)-u(\cdot,T)
\|
+ \|u(\cdot,T) -u^*_\ve(\cdot)\|+\|u^*_\ve-u^*\|\\
\leq
\sqrt{2}\|S_\ve u^\ve_{0}+\sigma_0^\ve-S_\ve u^\ve_{0}\|+\frac{\ve}{6}+\frac{\ve}{6}<\frac{\ve}{2},
\end{multline*}
from which 
follows the approximate controllability at any $T\in(0,T^*]$, since $T_1>0$ was arbitrarily small.
%
Moreover, if $T>T^*$ using the above argumentation 
we first obtain the ap\-proximate controllability at time $T^*.$ Then, we restart at time $T^*$ close to $u^*,$ and we stabilize the system into the neighborhood of $u^*,$ applying the above strategy 
$n$ times, for some $n\in\N,$ on $n$ small time interval by measure $\frac{T-T^*}{n},$ steering the system in every interval from a suitable approximation of $u^*$ to $u^*$.\\
{\bf Step 3:} {\it Evaluation of $\|\left(a(\cdot)u_{x}\right)_x  + f (\cdot,\cdot,u) \|^2_{L^2 (\widetilde{Q}_T)}$: Proof of the inequality \eqref{bound}}.\\ 
Multiplying by $\left(a(x)u_{x}\right)_x$ the equation in \eqref{PSmod}, 
integra\-ting 
over\\ $\widetilde{Q}_T=(-1,1)\times(T_1,T)$ and applying Young's 
inequality
we have
\begin{align}\label{a_x}
\parallel\left(a(\cdot)u_{x}\right)_x  &\parallel^2_{L^2 (\widetilde{Q} _T)}
\leq \int_{T_1}^T \int_{-1}^1 u_t \left(a(x)u_{x}\right)_x dx dt\\ &
- 
\frac{1}{T-T_1} \int_{T_1}^T \int_{-1}^1 \alpha_{\ve j}(x) u \left(a(x)u_{x}\right)_x dx dt \nonumber\\&+ \: 
\frac{1}{2}  \int_{T_1}^T  \int_{-1}^1   f^2(x,t,u) dx dt
+ \frac{1}{2}  \int_{T_1}^T  \int_{-1}^1  \left|\left(a(x)u_{x}\right)_x\right|^2 dx dt\,.\nonumber
\end{align}
Thus, by \eqref{a_x} using Proposition \eqref{f in L2} we obtain
\begin{multline}\label{a+f}
\|\left(a(\cdot)u_{x}\right)_x  + f (\cdot,\cdot,u)\|^2_{L^2 (\widetilde{Q}_T)}\leq
2\left( \| \left(a(\cdot)u_{x}\right)_x\|^2_{L^2 (\widetilde{Q}_T)}  + \|f (\cdot,\cdot,u) \|^2_{L^2 (\widetilde{Q}_T)}\right)\\
\leq 4
\int_{T_1}^T \int_{-1}^1 u_t \left(a(x)u_{x}\right)_x dx dt - 
\frac{4}{T-T_1} \int_{T_1}^T \int_{-1}^1 \alpha_{\ve j}(x) u \left(a(x)u_{x}\right)_x dx dt 
\\
+ 4C^2S_\ve^{2\vt}e^{2k \vt (T-T_1)}
\left(T-T_1\right)\|u^\ve_{0}\|^{2\vt}_{1,a
},
\end{multline}
where $C=C(\|u^\ve_{0}\|_{1,a})$ and $k$ are the positive constants given by Proposition \ref{f in L2}.
Let us estimate the first two terms of the right-hand side of \eqref{a_x}. Without loss of generality, let us consider the $(WDeg)$ problem with $\beta_0\gamma_0\neq0.$ 
Integrating by parts and using the sign condition $\beta_0\beta_1\leq0$ and $\gamma_0\gamma_1\geq0$ we have 
\begin{align}\label{deg2}
\;\;\;\;\;&\int_{T_1}^T \int_{-1}^1 u_t \left(a(x)u_{x}\right)_x dx dt \\
&=\int_{T_1}^T \left[u_t \left(a(x)u_{x}\right)\right]_{-1}^1 dt- \frac{1}{2}\int_{T_1}^T \int_{-1}^1 a(x)\left(u^2_{x}\right)_t dx dt
\nonumber\\
&\leq\frac{S_\ve}{2}\frac{\gamma_1}{\gamma_0}a^2(1)({u^\ve_{0x}}(1))^2-\frac{S_\ve}{2}\frac{\beta_1}{\beta_0}a^2(-1)(u^\ve_{0 x}(-1))^2+ \frac{S_\ve^2}{2} \int_{-1}^1 a(x)(u^\ve_{0 x})^2 dx dt\nonumber\\
&= c_1(S_\ve, u_0^\ve)+c_2(S_\ve)|u_0^\ve|^2_{1,a},\nonumber
\end{align}
where $c_1(S_\ve, u_0^\ve)\geq0$ and $c_2(S_\ve)>0$ are two constants. 
Let us note that in the $(SDeg)$ case or in the $(WDeg)$ problem with $\beta_0\gamma_0=0,$ we obtain a similar estimate, but in the third line of \eqref{deg2} at least one of the two boundary contributions is zero.\\
Furthermore, 
using \eqref{Ass alpha} and Proposition \ref{uni} 
we obtain
\begin{align}\label{deg3}
& \int_{T_1}^T \int_{-1}^1 \alpha_{\ve j}(x) u \left(a(x)u_{x}\right)_x dx dt \\
 &= 
 -\int_{T_1}^T \int_{-1}^1 \alpha_{\ve j}(x)a(x)u^2_{x} dx dt 
  -\frac{1}{2}\int_{T_1}^T \int_{-1}^1 \alpha'_{\ve j}(x)a(x)\left(u^2\right)_{x} dx dt \nonumber\\
&\geq -\frac{1}{2}\int_{T_1}^T \left[\alpha'_{\ve j}(x)a(x)u^2\right]_{-1}^1 dt+ \frac{1}{2}\int_{T_1}^T \int_{-1}^1 \left(\alpha''_{\ve j}(x)a(x)+\alpha'_{\ve j}(x)a'(x)\right)u^2 dx dt \nonumber\\
&\geq-\frac{1}{2}\sup_{x\in [-1,1]}\left|\alpha''_{\ve j}(x)a(x)+\alpha'_{\ve j}(x)a'(x)\right|  \int_{T_1}^T \int_{-1}^1u^2 dx dt\nonumber\\
 &\geq-(T-T_1) c(\alpha_{\ve j}',\alpha_{\ve j}'')e^{\nu (T-T_1)}S_\ve^2
\|u_{0}^\ve\|^2\,.\nonumber 
\end{align}
Finally, using \eqref{a+f}-\eqref{deg3}, we prove the inequality \eqref{bound}, that is for almost every \\$T\in(T_1,T_1+1)$ we have
\begin{multline*}
\|\left(a(\cdot)u_{x}\right)_x  + f (\cdot,\cdot,u)\|^2_{L^2 (\widetilde{Q}_T)}\leq
k_1(S_\ve, u_0^\ve)+k_2(S_\ve)|u_0^\ve|^2_{1,a}\\
+k_3(\nu,S_\ve,\alpha_{\ve j}',\alpha_{\ve j}'')
\|u_{0}^\ve\|^2+ k_4(S_\ve,\|u^\ve_{0}\|_{1,a})\|u^\ve_{0}\|^{2\vt}_{1,a
}\\
\leq k_1(S_\ve, u_0^\ve)+K_2(\nu,S_\ve,\alpha_{\ve j}',\alpha_{\ve j}'',\|u^\ve_{0}\|_{1,a})\|u^\ve_{0}\|^{2}_{1,a
},
\end{multline*}
where $k_1\geq0$ and $k_2,k_3,k_4,K_2>0$ are constants.
\end{proof}

\section{
Energy balance models in climate science}\label{climate}
Climate depends on various variables and parameters such as temperature,
humi\-dity, wind intensity, the effect of greenhouse gases, and so
on. It is also
affected by a complex set of interactions in the atmosphere,
oceans and continents, 
 that involve  physical, chemical,
geological and biological processes.\\ 
One of the first attempts to model the effects of the interaction between large ice masses and solar radiation on
climate is the one due, independently, to Budyko 
(see \cite{B2})
and Sellers (see \cite{S}). 
A complete treatment of the mathematical formulation of the Budyko-Sellers model has been obtained by J.I. Diaz and collaborators in \cite{D2}--\cite{DHT} and \cite{BCDT} (see also the interesting recent  monograph \cite{NK} on \lq\lq {\it Energy Balance Climate Models}'' 
by North and Kim, and 
some papers by P. Cannarsa and coauthors, see e.g. 
 \cite{CFproceedings1} and \cite{CMVams16} 
). \\
The Budyko-Sellers model is an {\it energy balance model}, which studies the role played by
continental and oceanic areas of ice on the evolution of the climate.
The effect of solar radiation on climate can be summarized in Figure 1. 

%

\begin{figure}
\label{EBM_2}
\begin{center}
\includegraphics[width=2.8in]{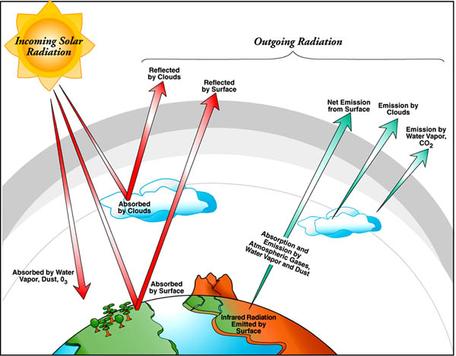}
\caption{Copyright by ASR}
\end{center}
\end{figure}
%
We have the following {\it energy balance}:\hspace{0.5cm}
$$\!\!\!\!\mbox{\textit{Heat variation}}=R_a-R_e+D,$$
where $ R_a$ is the \textit{absorbed
energy}, $ R_e$ is the \textit{emitted energy} and $ D$ is the \textit{diffusion part}.
If we represent the Earth by a compact two-dimensional manifold without boundary $\mathcal M,$ the general formulation of the Budyko-Sellers model is as follows
\begin{equation}\label{BSM}
c(X,t)u_t(X,t)-\Delta_{\mathcal M} u(X,t)= R_a(X,t,u)-R_e(
u),
\end{equation}
\noindent where $c(X,t)$ is a positive function (the heat capacity of the Earth), $u(X,t)$ is the annually (or seasonally) averaged Earth surface temperature, and $\Delta_{\mathcal M}$ is the classical Laplace-Beltrami operator. In order to simplify the equation \eqref{BSM}, in the following we can assume that the thermal capacity is $c\equiv 1.$
$R_e(u)$ denotes the Earth radiation, that is, the mean emitted energy flux, that depends on the amount of greenhouse gases, clouds and water vapor in the atmosphere and may be affected by anthropo-generated changes. In literature there are different empiric expressions of $R_e(u).$ In \cite{S}, Sellers proposes a Stefan-Boltzman type radiation law:
$$R_e(u)=\ve(u) u^4,$$
where $u$ is measured in Kelvin (and thus $u>0$), the positive function $\ve(u)=\sigma\left(1-m\tanh(\frac{19u^6}{10^6})\right)\!$ represents the emissivity, $\sigma$ is the emissivity constant and $m>0$ is the atmospheric opacity.
 In its place, in \cite{B2} Budyko considers a Newtonian linear type radiation, that is,
$R_e(u)=A+Bu,$
 with suitable $A\in\R,\,B>0,$ which is a linear approximation of the above law near the actual mean temperature of the Earth, $u=288,15 K \,(15^\circ C)$. \\
$R_a(X,t,u)$ denotes the fraction of the solar energy absorbed by the Earth and is assumed to be of the form
$$R_a(X,t,u)=QS(X,t)\beta(u),$$
in both the models. In the above relation,
$Q$ is the {\it Solar constant}, $S(X,t)
$ is the distribution of solar radiation over the Earth, in seasonal models (when the time scale is smaller) $S$ is a positive {\it\lq\lq almost periodic''} function in time (in particular, it is constant in time, $S=S(X),$ in annually averaged models, that is, when the time scale is long enough), and $\beta(u)$ is the {\it planetary coalbedo} representing the fraction absorbed according the average temperature ($\beta(u)\in [0,1]$) 
The \textit{coalbedo} function is equal to 1-\textit{albedo function}.
In climate science the albedo (see Figure 2) is more used and well-known than the coalbedo, and is the reflecting power of a surface. It is defined as the ratio of reflected radiation from the surface to incident radiation upon it. It may also be expressed as a percentage, and is measured on a scale from 0, for no reflecting power of a perfectly black surface, to 1, for perfect reflection of a white surface
.
The coalbedo is assumed to be a non-decreasing function of $u,$ that is, over ice-free zones (like oceans) the coalbedo is greater than over ice-covered regions. Denoted with $u_s=263,15K (
-10^\circ C)$ the critical value of the temperature at which ice becomes white (the \lq\lq {\it snow line}''), given two experimental values $a_i$ and $a_f,$ such that $0< a_i<a_f<1,$ in \cite{B2}
 Budyko 
proposes the following coalbedo function, discontinuity at $u_s,$ 
$$
 \beta(u)=\left\{\begin{array}{l}
\displaystyle{a_i, \qquad \text{over ice-covered}\qquad\{X\in {\mathcal M}:u(X,t)<u_s\},}\\ [2.5ex]
\displaystyle{a_f, \qquad \text{over ice-free}\qquad\;\:\quad\{X\in {\mathcal M}:u(X,t)>u_s\}}~.
\end{array}\right.
$$
Coversely, in \cite{S} Sellers proposes a more regular (at most Lipschitz continuous) function of $u.$ Indeed, Sellers represents $\beta(u)$ as a continuous piecewise linear function (beetween $a_i$ and $a_f$) with greatly increasing rate near $u=u_s,$ such that $\beta(u)=a_i,$ if $u(X,t)<u_s-\eta$ and $\beta(u)=a_f,$ if $u(X,t)>u_s+\eta,$ 
 for some small $\eta>0.$ 
 If we assume that $\mathcal M
$ is the unit sphere of $\R^3,$ the Laplace-Beltrami operator becames
$$\Delta_{\mathcal M}\,u=\frac1{\sin \phi}\Big\{\frac{\partial}{\partial \phi}\Big(\sin \phi \frac{\partial u}{\partial\phi}\Big)+\frac1{\sin \phi}\,\frac{\partial^2u}{\partial \lambda^2}\Big\},$$
where $\phi$ is the \textit{colatitude} and $\lambda$ is the \textit{longitude}.
\begin{figure}[pb]\label{Albedo}
\centerline{\includegraphics[width=1.3in]{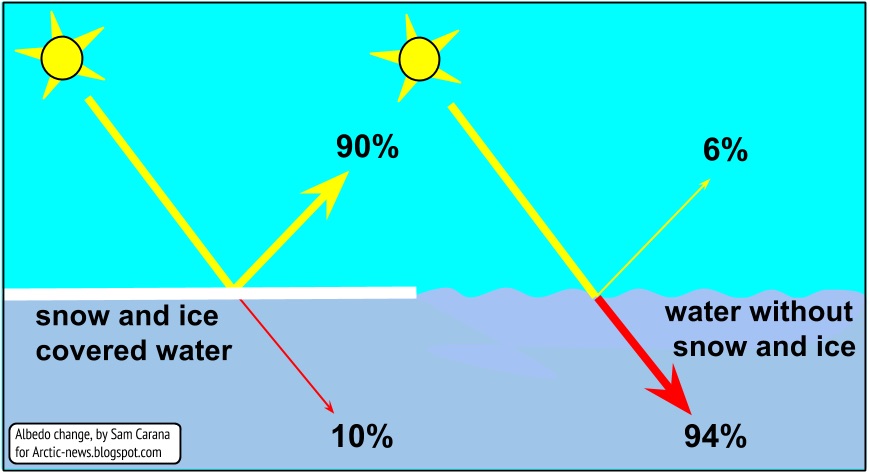}}
\vspace*{8pt}
\caption{Copyright by ABC Columbia}
\end{figure}
{\begin{figure}[pb]\label{mean}
\centerline{\includegraphics[width=1.3in]{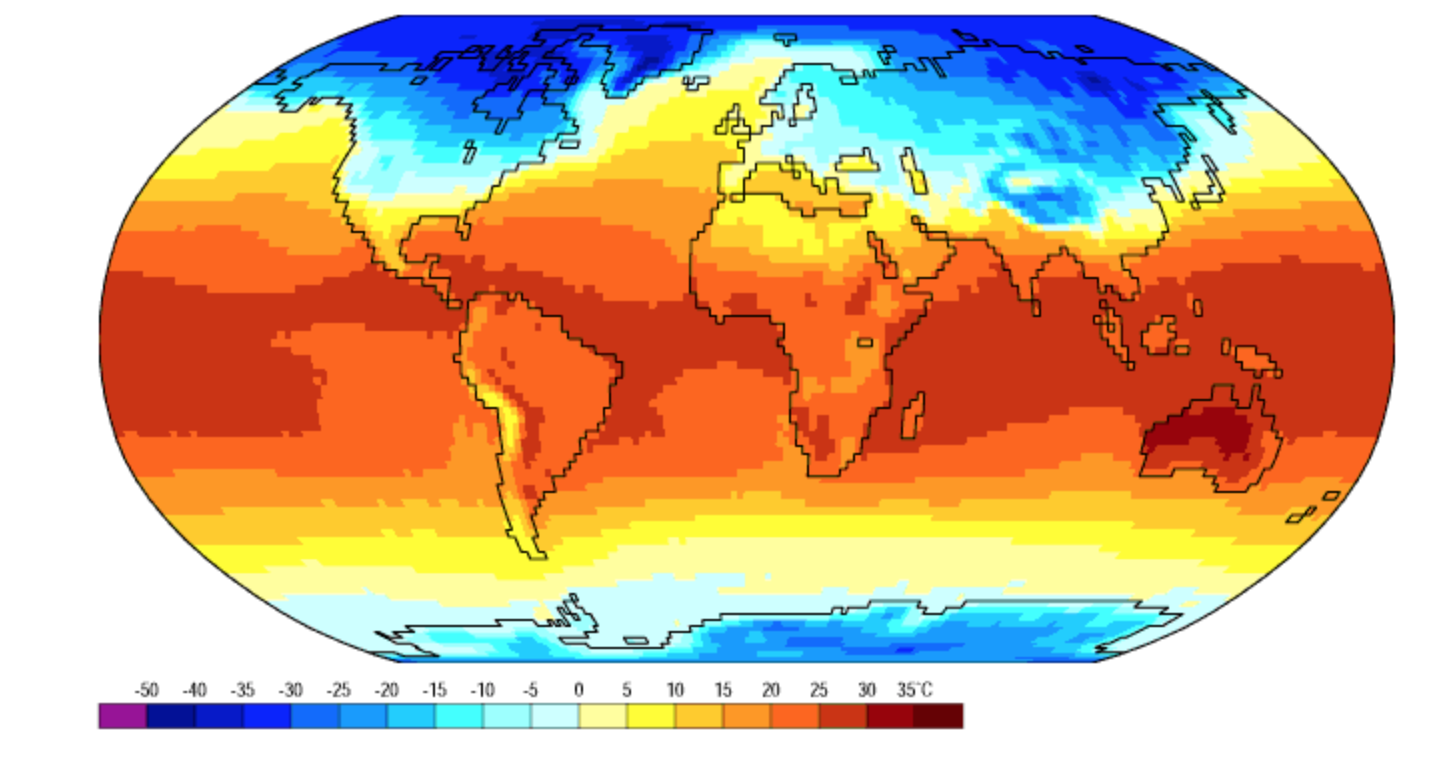}}
\vspace*{8pt}
\caption{Copyright by Edu-Arctic.eu}
\end{figure}}

Thus, if we take the average of the temperature at $ x=\cos\phi$ (see in Figure 3,
 that the distribution of the temperature at the same colatitude can be considered approximately uniform). 
In such a model, the sea level mean
zonally averaged temperature $u(x,t)$ on the Earth, where $t$ still
denotes time, 
 satisfies a \textit{Cauchy-Neumann} strongly degenerate problem, 
 in
the bounded domain $(-1,1),$ of the following type 
\begin{equation*}
  \left\{\begin{array}{l}
\displaystyle{u_t-\big((1-x^2)u_x\big)_x=\alpha(x,t)\,\beta(
u)+f(x,t,u), \qquad\,\, x\in (-1,1),}\\ [2.5ex]
\displaystyle{
\lim_{x\rightarrow \pm1}(1-x^2)u_{x}(x,t)
=0, \qquad\qquad\qquad\qquad\quad\quad\quad\,\;\, t\in(0,T)
}~.
\end{array}\right.
\end{equation*}
Then, the uniformly parabolic equation \eqref{BSM} has been transformed into a 1-D degenerate parabolic equation.
So, we have showen that our degenerate reaction-diffusion system \eqref{PS} reduces to the 1-D Budyko-Sellers model when $a(x)=1-x^2.$

%
\subsubsection*{Environmental aspects}
We remark that the Budyko-Sellers model studies the effect of solar radiation on climate, so it takes into consideration the influence of \lq\lq\textit{greenhouse gases}'' on climate. These cause \lq\lq\textit{global warming}'' which, consequently, provokes the increase in the average temperature of the Earth's atmosphere and of oceans. 
 This process consists of a warming of the Planet Earth by the action of greenhouse gases, compounds present in the air in a relatively low concentration (carbon dioxide, water vapour, methane, etc.). Greenhouse gases allow solar radiation to pass through the atmosphere while obstructing the passage towards space of a part of the infrared radiation from the Earth's surface and from the lower atmosphere.
The majority of climatologists believe that Earth's climate is destined to change, because human activities are altering atmosphere's chemical composition. In fact, the enormous anthropogenic emissions of greenhouse gases are causing an increase in the Earth's temperature, consequently, provoking profound changes in the Planetary climate.
One of the aims of this kind of research is to estimate the possibility of controlling the variation of the temperature over decades and centuries 
 and it proposes to provide a study of the possibility of slowing down \textit{global warming.}

\subsubsection*{Related open problems} Keeping in mind the meaning of the multiplicative control $\alpha$ in the climate framework, since in the main control result of this paper, Theorem \ref{T1}, the action must be realized over any latitude x in $[-1,1],$ it would be more realistic follow up in future papers the formulation that was already proposed by Von Neumann in 1955 (see the paper \cite{DVon} by J.I. Diaz for its comprehensive presentation), that is,  by using a localized control defined merely for some set of latitudes. From the multiplicative controllability point of view, that problem is hard but it is under research by J.I. Diaz and the author; one possible approach consists to follow some ideas introduced in \cite{FK}, in the case of uniformly parabolic equations.
 
%
%
%
%


%

%
%
\section{Appendix: The proofs of the existence and uniqueness results 
}\label{AppA}
In this appendix, in Section \ref{strictsec} we prove Theorem \ref{exB},
that is,
we show that, for all $\alpha\in L^\infty(Q_T)$ piecewise static functions, there exists a unique strict solution $u\in{\mathcal{H}(Q_T)}$ to $(\ref{PS})$, for all initial state $u_0\in H^1_a(-1,1)$. Thus, in Section \ref{strongth} we can prove Theorem \ref{strong}, that is, by an approximation argument we obtain the existence and uniqueness of the strong solution to \eqref{PS}, for all initial state $u_0\in L^2(-1,1)$.
\subsection{Existence and uniqueness of the strict solution to \eqref{PS}}\label{strictsec}
The aim of all Section \ref{strictsec} is to obtain the proof  of Theorem \ref{exB}, that is given in Subsection \ref{strictth0}.  
For this purpose, we will follow the following strategy:
\begin{itemize}
\item in Subsection \ref{MR} we present a {\it maximal regularity} result for abstract nonhomogeneous linear evolution equations in Hilbert spaces;
\item in Subsection \ref{strictth1} we introduce the notion of {\it mild solutions} and we give an existence and uniqueness result for mild solutions;
\item in Subsection \ref{local_strict} we prove the existence and uniqueness of strict solutions for static coefficient $\alpha\in L^\infty(-1,1);$
\item in Subsection \ref{strictth0}, finally we prove that if $u_0\in H^1_a(-1,1)$ then the mild solution is also a strict solution, for all $\alpha\in L^\infty(Q_T)$ piecewise static \- function. 
\end{itemize}


\subsubsection{A \textit{maximal regularity} result
for linear problems 
}\label{MR}

Let us consider
the following linear problem 
in the Hilbert space $L^2(-1,1)$
\begin{equation}\label{Ball}
 \left\{\begin{array}{l}
\displaystyle{u^\prime(t)=A\,u(t)\,+g(t),\qquad  t>0 }\\ [2.5ex]
\displaystyle{u(0)=u_0\, 
}~,
\end{array}\right.
\end{equation}
where $A$ is the operator in (\ref{DA}), $g\in L^1(0,T;L^2(-1,1)),\,u_0\in L^2(-1,1)$.\\ 
First, let us recall the notion of {\it \lq\lq weak solution''} introduced by J. Ball in \cite{Ba} for the linear problem $(\ref{Ball})$ (see, e.g., also \cite{CMP}, \cite{ACF}, \cite{F1} and  \cite{F2}).
\begin{definition} We define a \textit{weak solution} of the linear problem (\ref{Ball}) a function $u\in C([0,T];L^2(-1,1))$ such that for every $v\in D(A^*)$ ($A^*$ denotes the adjoint of A) the function $\langle u(t),v\rangle$ is absolutely continuous on $[0,T]$
and
$$\frac{d}{dt}\langle u(t),v\rangle=\langle u(t),A^*v\rangle\,+\langle g(t),v\rangle,$$
for almost all $t\in [0,T].$ 
\end{definition}
Then, we recall the following existence and uniqueness result obtained by J. Ball in \cite{Ba} (see also \cite{CFproceedings1}, \cite{CF2}, \cite{F1} and  \cite{F2}).
\begin{proposition}\label{Ball_mild}
For every $u_0 \in L^2(-1,1)$ there exists a unique weak solution $u$
of (\ref{Ball}), which is given by the following representation formula 
$$u(t)=e^{tA}u_0+\int_0^t e^{(t-s)A} g(s)\,ds,\; \;\;t\in[0,T].$$ 
\end{proposition}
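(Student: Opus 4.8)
The plan is to take the variation of constants formula itself as the candidate solution and to split the argument into \emph{existence} (verifying that $u(t)=e^{tA}u_0+\int_0^t e^{(t-s)A}g(s)\,ds$ is a weak solution) and \emph{uniqueness}. Throughout I would exploit that, by Proposition \ref{str cont}, $A$ is self-adjoint, so that $A^*=A$, the domain $D(A^*)=D(A_0)$ is dense in $L^2(-1,1)$, and the adjoint semigroup coincides with $e^{tA}$. This is what lets me pair the formula against any test vector $v\in D(A^*)$ and transfer the semigroup onto $v$, where differentiation is legitimate.

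For existence, I would first note that $u\in C([0,T];L^2(-1,1))$: the term $e^{tA}u_0$ is continuous by strong continuity of the semigroup, and the convolution term is continuous because $\|e^{tA}\|$ is bounded on $[0,T]$ and $g\in L^1(0,T;L^2(-1,1))$. Fixing $v\in D(A^*)$, I would write
\[
\langle u(t),v\rangle=\langle u_0,e^{tA^*}v\rangle+\int_0^t\langle g(s),e^{(t-s)A^*}v\rangle\,ds .
\]
Since $v\in D(A^*)$, the map $t\mapsto e^{tA^*}v$ is strongly $C^1$ with derivative $e^{tA^*}A^*v$, so the first term is differentiable with derivative $\langle e^{tA}u_0,A^*v\rangle$. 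For the convolution term I would split off the diagonal contribution, $\int_0^t\langle g(s),v\rangle\,ds$, whose derivative is $\langle g(t),v\rangle$ for a.e.\ $t$ by Lebesgue differentiation, while the smoother remainder $\int_0^t\langle g(s),(e^{(t-s)A^*}-I)v\rangle\,ds$ differentiates to $\int_0^t\langle e^{(t-s)A}g(s),A^*v\rangle\,ds$ (the boundary term vanishing since $e^{0}-I=0$). Collecting terms gives $\frac{d}{dt}\langle u(t),v\rangle=\langle u(t),A^*v\rangle+\langle g(t),v\rangle$ for a.e.\ $t$, and since the right-hand side lies in $L^1(0,T)$ the function $\langle u(\cdot),v\rangle$ is absolutely continuous. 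This is exactly the definition of a weak solution of (\ref{Ball}).

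For uniqueness --- the genuinely subtle part --- I would set $w=u_1-u_2$, which is a weak solution with $u_0=0$ and $g=0$, so in particular $w(0)=0$. Fix $\tau\in(0,T]$ and $z\in D(A^*)$, and introduce the time-reversed test function $\varphi(t):=e^{(\tau-t)A^*}z$, which remains in $D(A^*)$ and is strongly $C^1$ with $\varphi'(t)=-A^*\varphi(t)$. I would then show that $t\mapsto\langle w(t),\varphi(t)\rangle$ is absolutely continuous with a.e.\ derivative
\[
\frac{d}{dt}\langle w(t),\varphi(t)\rangle=\langle w(t),A^*\varphi(t)\rangle+\langle w(t),\varphi'(t)\rangle=0,
\]
where the first summand uses the homogeneous weak identity for $w$ evaluated at the fixed vector $v=\varphi(t)$, and the second uses $\varphi'(t)=-A^*\varphi(t)$. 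Absolute continuity forces $\langle w(\cdot),\varphi(\cdot)\rangle$ to be constant, so $\langle w(\tau),z\rangle=\langle w(\tau),\varphi(\tau)\rangle=\langle w(0),\varphi(0)\rangle=\langle w(0),e^{\tau A^*}z\rangle=0$. As $z$ ranges over the dense set $D(A^*)$ and $\tau$ is arbitrary, $w\equiv0$, yielding uniqueness.

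I expect the main obstacle to be the rigorous justification of this product rule for $\langle w(t),\varphi(t)\rangle$, since $w$ is only a weak solution while $\varphi$ is time-dependent. I would handle it by splitting the difference quotient as
\[
\langle w(t+h)-w(t),\varphi(t)\rangle+\langle w(t+h)-w(t),\varphi(t+h)-\varphi(t)\rangle+\langle w(t),\varphi(t+h)-\varphi(t)\rangle,
\]
where the first term divided by $h$ converges to $\langle w(t),A^*\varphi(t)\rangle$ via the integrated weak identity and continuity of $s\mapsto\langle w(s),A^*\varphi(t)\rangle$, the middle term is $o(h)$ because $\|w(t+h)-w(t)\|\to0$ while $\|\varphi(t+h)-\varphi(t)\|=O(h)$, and the last term yields $\langle w(t),\varphi'(t)\rangle$ by strong differentiability of $\varphi$ and continuity of $w$. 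The point worth stressing is that one needs genuine absolute continuity, not merely a.e.\ differentiability, to deduce constancy from a vanishing derivative; hence the integrated form of the identity must be established, and this is the technical heart of the proof.
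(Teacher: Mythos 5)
Your proposal is correct, but there is little in the paper to compare it against line by line: the paper does not prove Proposition \ref{Ball_mild} at all --- it explicitly recalls it as a known result of J.~Ball \cite{Ba}, where the statement is established for an arbitrary generator of a $C_0$-semigroup on a Banach space and $g\in L^1(0,T;X)$. What you have written is essentially a correct reconstruction of Ball's argument, streamlined by the Hilbert-space setting of the paper: since $A$ is self-adjoint (Proposition \ref{str cont}), $A^*=A$ has dense domain and $e^{tA^*}=(e^{tA})^*$ is again a $C_0$-semigroup, so both your existence computation (pairing the variation-of-constants formula with $v\in D(A^*)$ and differentiating, with the off-diagonal splitting of the convolution term) and your uniqueness argument (constancy of $t\mapsto\langle w(t),e^{(\tau-t)A^*}z\rangle$, then density of $D(A^*)$) go through exactly as you describe. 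In the general Banach-space version the uniqueness step is more delicate because $D(A^*)$ need not be norm-dense; your use of self-adjointness is precisely what buys you the clean duality argument.

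Two small technical remarks, neither a genuine gap. In the existence half, the inference ``the a.e.\ derivative lies in $L^1(0,T)$, hence $\langle u(\cdot),v\rangle$ is absolutely continuous'' is stated backwards --- a.e.\ differentiability with integrable derivative does not by itself imply absolute continuity --- but the conclusion is immediate anyway: each of your three pieces is AC by construction ($\langle u_0,e^{tA^*}v\rangle$ is $C^1$; $\int_0^t\langle g(s),v\rangle\,ds$ is an indefinite integral; the remainder term has AC modulus thanks to the bound $\bigl\lVert\bigl(e^{(t_2-s)A^*}-e^{(t_1-s)A^*}\bigr)v\bigr\rVert\leq M\,\lVert A^*v\rVert\,(t_2-t_1)$), or one can verify the integrated identity directly by Fubini. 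You correctly flag the same trap in the uniqueness half; note, however, that there your three-term splitting of the difference quotient actually produces the derivative at \emph{every} $t\in(0,\tau)$, since the integrated identity $\langle w(t+h)-w(t),v\rangle=\int_t^{t+h}\langle w(s),A^*v\rangle\,ds$ is already part of the definition of weak solution and the integrand is continuous in $s$; hence constancy of $\langle w(\cdot),\varphi(\cdot)\rangle$ follows from the mean value theorem alone, and the separate absolute-continuity discussion you anticipate as ``the technical heart'' is in fact dispensable.
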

Now, we are able to present Proposition \ref{MaxReg}, a \textit{maximal regularity} result that holds
in the Hilbert space $L^2(-1,1).$ Before giving the statement of Proposition \ref{MaxReg} we recall that 
by \textit{maximal regularity} for (\ref{Ball}) we mean that $u^\prime$ and $Au$ have the same regularity of $g$.
\begin{prop}\label{MaxReg}
Given $T>0$ and $g\in L^2(0,T;L^2(-1,1))
$\,
. 
For every 
$u_0 \in H^1_a(-1,1)$, there exists a unique
solution of (\ref{Ball})
$$u\in{\mathcal{H}}(Q_T)=L^{2}(0,T;D(A_0)
)\cap H^{1}(0,T;L^2(-1,1))\cap C([0,T];H^{1}_a(-1,1))\,. 
$$
Moreover, a positive constant $C_0(T)$ exists (nondecreasing in $T$), such that the following inequality holds
$$\|u\|_{{\mathcal{H}}(Q_T)}\leq C_0(T)\left[\|u_0\|_{1,a}+\|g\|_{L^{2}(Q_{T})}\right].$$
\end{prop}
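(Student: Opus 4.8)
The plan is to prove existence by a Faedo--Galerkin approximation in the eigenbasis $\{\omega_p\}_{p\in\N}$ of $(A_0,D(A_0))$ furnished by Proposition \ref{spectrum}, to extract the quantitative $\mathcal{H}(Q_T)$ bound from two energy identities carried out on the approximations, and to identify the limit with the weak solution of Proposition \ref{Ball_mild} so that the representation formula comes for free. Abstractly the result is nothing but $L^2$-maximal regularity for the self-adjoint, bounded-above generator $A$, but the energy method is the most self-contained route and makes transparent the role of the structural conditions in $(Deg)$.

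First I would fix $n$, set $V_n=\mathrm{span}\{\omega_1,\dots,\omega_n\}$ with $L^2$-orthogonal projection $P_n$, and seek $u^n(t)=\sum_{p=1}^n d^n_p(t)\,\omega_p$ solving the projected system $\langle u^n_t,\omega_p\rangle=\langle Au^n,\omega_p\rangle+\langle g,\omega_p\rangle$ for $p=1,\dots,n$, with $u^n(0)=P_nu_0$. Since the $\omega_p$ diagonalize $A_0$ and $\alpha\in L^\infty$, this is a linear ODE system with $L^2$-in-time datum, hence possesses a unique absolutely continuous solution, and $u^n(t),\,u^n_t(t)\in V_n\subset D(A_0)$. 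Testing with $u^n$ and integrating by parts, the identity $\langle(au^n_x)_x,u^n\rangle=-|u^n|_{1,a}^2+[\text{bdry}]$ together with the boundary conditions (the bracket vanishing in $(SDeg)$ and having favourable sign in $(WDeg)$, exactly as in \eqref{bdterm}) and Gronwall's lemma yields the uniform \emph{basic} bound $\sup_t\|u^n\|^2+\int_0^T|u^n|_{1,a}^2\,dt\le C(T)\big(\|u_0\|^2+\|g\|_{L^2(Q_T)}^2\big)$, i.e. a $\mathcal{B}(Q_T)$ estimate.

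The maximal-regularity bound comes from testing with $u^n_t$. Then $\|u^n_t\|^2=\langle(au^n_x)_x,u^n_t\rangle+\langle\alpha u^n+g,u^n_t\rangle$, and integrating the first term by parts gives $\langle(au^n_x)_x,u^n_t\rangle=[a u^n_x\,u^n_t]_{-1}^1-\tfrac12\tfrac{d}{dt}|u^n|_{1,a}^2$. Here lies the decisive point: in $(SDeg)$ the bracket vanishes because the flux $au_x\to0$ at $\pm1$ by Proposition \ref{caratH2}, whereas in $(WDeg)$ the Robin conditions convert it into $-\tfrac12\tfrac{d}{dt}$ of the boundary quantity $\tfrac{\gamma_0}{\gamma_1}u^n(1)^2-\tfrac{\beta_0}{\beta_1}u^n(-1)^2$, which is nonnegative precisely by the sign conditions $\gamma_0\gamma_1\ge0$ and $\beta_0\beta_1\le0$. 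Absorbing $\tfrac12\|u^n_t\|^2$ through Young's inequality, controlling $\|u^n\|$ by the basic bound, and estimating the initial energy by $\|u_0\|_{1,a}^2$ (using continuity of the trace in the $(WDeg)$ case), Gronwall then produces $\int_0^T\|u^n_t\|^2\,dt+\sup_t|u^n|_{1,a}^2\le C_0(T)^2\big(\|u_0\|_{1,a}^2+\|g\|_{L^2(Q_T)}^2\big)$ with $C_0(T)$ nondecreasing in $T$; reinserting this into $(au^n_x)_x=u^n_t-\alpha u^n-P_ng$ controls $\int_0^T\|(au^n_x)_x\|^2\,dt$ and closes the full $\mathcal{H}(Q_T)$ estimate.

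Finally I would pass to the limit: the uniform bounds give a subsequence with $u^n\rightharpoonup u$ in $H^1(0,T;L^2(-1,1))\cap L^2(0,T;D(A_0))$ and weak-$*$ in $L^\infty(0,T;H^1_a(-1,1))$; the limit solves \eqref{Ball}, the estimate survives by weak lower semicontinuity, and the continuity $u\in C([0,T];H^1_a(-1,1))$ follows from $u\in H^1(0,T;L^2)\cap L^2(0,T;D(A_0))$ by a standard interpolation (Lions-type) lemma. Uniqueness is immediate from linearity and the basic energy estimate, and by Proposition \ref{Ball_mild} this $u$ is the weak solution. The step I expect to be the main obstacle is the rigorous handling of the boundary bracket $[a u_x\,u_t]_{-1}^1$: guaranteeing that it vanishes in $(SDeg)$ or is of the correct sign in $(WDeg)$ is exactly where the degeneracy and the structural sign conditions of $(Deg)$ enter, and where the argument genuinely departs from the uniformly parabolic case.
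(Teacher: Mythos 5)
Your proposal is correct in substance but follows a genuinely different route from the paper. The paper disposes of Proposition \ref{MaxReg} in a few lines by invoking the abstract maximal regularity theorem of Bensoussan--Da Prato--Delfour--Mitter (Theorem 3.1 in Section 3.6.3 of \cite{BDDM1}), with exactly three inputs: $g\in L^2(0,T;L^2(-1,1))$, the fact that $A$ generates an \emph{analytic} semigroup (quoting \cite{CRV}), and the identification of $H^1_a(-1,1)$ as an interpolation space between $D(A_0)$ and $L^2(-1,1)$. Your Faedo--Galerkin scheme in the eigenbasis of Proposition \ref{spectrum}, with the two energy estimates (testing with $u^n$ and with $u^n_t$), the weak-limit passage, and the identification with Ball's weak solution via Proposition \ref{Ball_mild}, is instead the classical variational proof of $L^2$-maximal regularity for self-adjoint, bounded-above generators, and it does work here: the sign conditions in $(Deg)$ make the boundary brackets favourable exactly as you say, and Gronwall delivers $C_0(T)$ nondecreasing in $T$. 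What your approach buys is self-containedness and transparency about where the degeneracy and the Robin sign conditions enter; what the paper's buys is brevity, at the cost of nontrivial external inputs (analyticity, interpolation theory). Two small caveats you should address to make your argument airtight. First, in the $(SDeg)$ case the statement that the bracket $[a u^n_x\, u^n_t]_{-1}^1$ vanishes ``because $au^n_x\to 0$'' is incomplete: in the strongly degenerate regime $H^1_a$ functions need not be bounded near $\pm1$, so the decay of the flux alone does not kill the product; you need the full strength of Proposition \ref{caratH2} (in particular $au\in H^1_0(-1,1)$), or equivalently the standard Green formula without boundary terms for $u\in H^2_a$, $v\in H^1_a$ available in \cite{ACF} and \cite{CMV2} --- legitimate here since $u^n_t\in V_n\subset D(A_0)$. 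Second, two points you pass over quickly deserve mention: the uniform bound $\|P_n u_0\|_{1,a}\le C\|u_0\|_{1,a}$ holds because the spectral projections are contractions for the quadratic form of $-A_0$, whose form domain is $H^1_a$ with boundary terms of favourable sign; and your final continuity step $u\in C([0,T];H^1_a)$ via the Lions-type lemma quietly uses the very same interpolation identification of $H^1_a$ between $D(A_0)$ and $L^2$ on which the paper's citation rests, so that ingredient is not actually avoided, merely relocated.
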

\begin{proof}
It is a direct consequence of Theorem 3.1 in Section 3.6.3 of \cite{BDDM1}, pp. $79-82$ (see also \cite{CV} and \cite{F1}
), keeping in mind the following three crucial issues related to the abstract setting in Section 3.6.3 of \cite{BDDM1}: 
\begin{itemize}
\item $g\in L^2(0,T;X),$ where $X$ is the Hilbert space $L^2(-1,1)$;
\item $A$ is the infinitesimal generator of an {\it analytic} semigroup (see, e.g., \cite{CRV});
\item $u_0 \in H^1_a(-1,1),$ where $H^1_a(-1,1)$ is an interpolation space between the domain $D(A_0)$ and $L^2(-1,1).$
\end{itemize}
\end{proof}

\subsubsection{Existence and uniqueness of the mild solution to \eqref{PS}}\label{strictth1}
Before
introduce the notion of {\it mild solutions} in Definition \ref{mild}, in this section
we consider the following abstract representation of the semilinear problem $(\ref{PS})$ 
in the Hilbert space $L^2(-1,1)$
\begin{equation}\label{NLmild}
 \left\{\begin{array}{l}
\displaystyle{u^\prime(t)=A_0\,u(t)+\psi(t,u(t))\,,\qquad  t>0 }\\ [2.5ex]
\displaystyle{u(0)=u_0 \in L^2(-1,1) 
}~,
\end{array}\right.
\end{equation}
where $A_0$ is the operator defined in \eqref{DA0} 
 and, for every $u\in {\mathcal{B}(Q_T)},
$ 
\begin{equation}\label{psimap}
\psi(t,u)=:
\psi(x,t,u(x,t))=
\alpha(x,t)u(x,t)+f(x,t ,u(x,t)),\;\;\forall (x,t)\in Q_T,
\end{equation}
with $\alpha\in L^\infty(Q_T),$ piecewise static, given in \eqref{PS}.

We note that, since from \eqref{fsigni}, $\mbox{ for a.e. } (x,t)\in Q_T,\;\forall u,v\in \R,$ it follows that
$$
|f(x,t,u)-f(x,t,v)|
\leq\nu\left(1+|u|^{\vartheta-1}+|v|^{\vartheta-1}\right)|u-v|,
$$
we deduce
\begin{multline*}
|\psi(t,u)-\psi(t,v)|\leq |\alpha(x,t)u-\alpha(x,t)v|+|f(x,t,u)-f(x,t,v)|\\\leq \|\alpha\|_\infty |u-v|+\nu\left(1+|u|^{\vartheta-1}+|v|^{\vartheta-1}\right)|u-v|
.
\end{multline*}
Thus,
\begin{equation}\label{lip_psi}
|\psi(t,u)-\psi(t,v)|\leq L |u-v|, \;\;\;  \mbox{ for a.e. } \,(x,t)\in Q_T,\;\forall u,v\in L^2(-1,1),
\end{equation}
where $L=L(u,v)=\|\alpha\|_\infty+\nu\left(1+|u|^{\vartheta-1}+|v|^{\vartheta-1}\right)$ doesn't depend on t.
\begin{definition}\label{mild}
Let 
$u_0\in L^2(-1,1).$ We say that $u\in C([0,T];L^2(-1,1))
$ is a \textit{mild solution} of \eqref{PS}, if 
$u$ is a solution of the following integral equation
$$u(t)=e^{tA_0}u_0+\int_0^te^{A_0(t-s)}\psi(s,u(s))\,ds,\qquad t\in[0,T].$$
\end{definition}
The existence and the uniqueness of the mild solution of \eqref{PS} follows from the following Proposition \ref{mild_LY}, that is a consequence of a result contained in the Li and Yong's book \cite{LY}, see Proposition 5.3 in Chapter 2, Sec. 5, pp. 63--66. For the next proposition the following general assumptions on the function $\psi$ are introduced:
\begin{enumerate}
\item[($\psi$SM)] for each $\bar{u}\in L^2(-1,1),$ $\psi(\cdot,\bar{u}):[0,T]\longrightarrow L^2(-1,1)$ is {\it strongly mea\-surable}, that is there exists a sequence of {\it simple functions} (piecewise static fun\-ctions) $\psi_k(\cdot,\bar{u}):[0,T]\longrightarrow  L^2(-1,1)$ such that
$$\lim_{k\rightarrow\infty}|\psi_k(t,\bar{u})-\psi(t,\bar{u})|=0,\;\;\;\,\;\;\;\text{ a.e. }\;\;t\in[0,T];$$
\item[($\psi$L)] there exists a function $L(t)\in L^1(0,T)$ such that $$|\psi(t,u)-\psi(t,v)|\leq L(t) |u-v|, \;\;\;  
 \,\text{ a.e. } t\in[0,T],\;\forall u,v\in L^2(-1,1),$$
\qquad\qquad\qquad\qquad\qquad\qquad 
and
$$|\psi(t,0)|\leq L(t),\,\text{ a.e. } t\in[0,T].$$
\end{enumerate}

\begin{proposition}\label{mild_LY}
There exists a unique mild solution to 
\eqref{NLmild} 
under the assumptions ($\psi$SM) and ($\psi$L).
\end{proposition}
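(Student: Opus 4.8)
The plan is to recast the integral equation of Definition \ref{mild} as a fixed point problem and to invoke the Banach contraction principle on the Banach space $C([0,T];L^2(-1,1))$. I would introduce the solution map
\begin{equation*}
(\Gamma u)(t) := e^{tA_0}u_0 + \int_0^t e^{(t-s)A_0}\,\psi(s,u(s))\,ds, \qquad t\in[0,T],
\end{equation*}
where $e^{tA_0}$ is the strongly continuous semigroup generated by $A_0$ (Propositions \ref{str cont} and \ref{spectrum}); by construction a fixed point of $\Gamma$ is exactly a mild solution of \eqref{NLmild}. Since $A_0$ is dissipative (Proposition \ref{str cont} with $\alpha\equiv0$), the semigroup is a contraction semigroup, so $\|e^{tA_0}\|\leq 1$ for every $t\geq 0$; this is essentially the only quantitative property of the semigroup I would need.

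First I would verify that $\Gamma$ maps $C([0,T];L^2(-1,1))$ into itself. The delicate point is that, for a fixed continuous $u$, the Nemytskii composition $s\mapsto \psi(s,u(s))$ must be strongly measurable: I would approximate $u$ uniformly by functions piecewise constant in $t$, apply assumption ($\psi$SM) on each subinterval (where the state is frozen), and pass to the limit using the Lipschitz bound ($\psi$L), which yields pointwise convergence of the compositions and hence strong measurability of the limit. Integrability in $L^1(0,T;L^2(-1,1))$ then follows from ($\psi$L), since $\|\psi(s,u(s))\|\leq \|\psi(s,0)\|+L(s)\|u(s)\|\leq L(s)\bigl(1+\|u\|_{C([0,T];L^2)}\bigr)$. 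Thus the Bochner integral exists, and continuity of $t\mapsto(\Gamma u)(t)$ follows from the strong continuity of the semigroup together with dominated convergence.

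To produce the contraction I would use a Bielecki-type weighted norm rather than shrinking the time horizon. Setting $\Lambda(t):=\int_0^t L(s)\,ds$ and, for a fixed $k>1$,
\begin{equation*}
\|u\|_{*}:=\sup_{t\in[0,T]} e^{-k\Lambda(t)}\,\|u(t)\|,
\end{equation*}
which is equivalent to the sup-norm because $\Lambda$ is bounded, the bound $\|e^{tA_0}\|\leq 1$ and ($\psi$L) give
\begin{align*}
\|(\Gamma u)(t)-(\Gamma v)(t)\| &\leq \int_0^t L(s)\,\|u(s)-v(s)\|\,ds \\
&\leq \|u-v\|_{*}\int_0^t L(s)\,e^{k\Lambda(s)}\,ds = \frac{1}{k}\bigl(e^{k\Lambda(t)}-1\bigr)\,\|u-v\|_{*}.
\end{align*}
Multiplying by $e^{-k\Lambda(t)}$ and taking the supremum yields $\|\Gamma u-\Gamma v\|_{*}\leq k^{-1}\|u-v\|_{*}$, so $\Gamma$ is a strict contraction for any $k>1$.

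The Banach fixed point theorem then provides a unique $u\in C([0,T];L^2(-1,1))$ with $\Gamma u=u$, i.e. a unique mild solution on the whole interval $[0,T]$, with no need to glue together local pieces. I expect the genuine obstacle to be the well-definedness step, specifically the strong measurability of the composed map $s\mapsto\psi(s,u(s))$: the contraction estimate is routine once the semigroup bound and the Lipschitz assumption are in hand, whereas the measurability is exactly where the hypothesis ($\psi$SM) is indispensable, compensating for the fact that $\psi$ is only assumed measurable, and not continuous, in the time variable.
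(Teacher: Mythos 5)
Your proposal is correct, but it should be said up front that the paper does not actually prove Proposition \ref{mild_LY}: it is quoted as a consequence of Proposition 5.3 in Chapter 2, Sec.\ 5 of Li and Yong's book \cite{LY}, so you have in effect reconstructed the proof of the cited result rather than paralleled an argument in the paper. Your reconstruction is the standard one — the map $\Gamma$ is exactly the mild-solution map, and fixed points are mild solutions by Definition \ref{mild} — with one worthwhile variation: the Bielecki weighted norm $\|u\|_{*}=\sup_{t}e^{-k\Lambda(t)}\|u(t)\|$, $\Lambda(t)=\int_0^t L(s)\,ds$, makes $\Gamma$ a strict contraction on all of $C([0,T];L^2(-1,1))$ in a single step, whereas the textbook route (and the natural reading of \cite{LY}) gets a contraction either on a short interval followed by continuation, or for a high iterate $\Gamma^n$ via the factorial bound $\bigl(\int_0^T L\bigr)^n/n!$; your weighted-norm computation $\int_0^t L(s)e^{k\Lambda(s)}\,ds=\frac{1}{k}\bigl(e^{k\Lambda(t)}-1\bigr)$ is legitimate because $\Lambda$ is absolutely continuous, and it buys global-in-time existence without any gluing. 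Two small points deserve explicit mention. First, the bound $\|e^{tA_0}\|\leq 1$ does follow from Proposition \ref{str cont} (a densely defined self-adjoint dissipative operator generates a contraction semigroup, by Lumer--Phillips), but your argument does not really need contractivity: the uniform bound $\sup_{[0,T]}\|e^{tA_0}\|=:M<\infty$ suffices after choosing $k>M$, which keeps the proof at the level of generality of \cite{LY}. Second, you correctly identified the only genuinely delicate step — strong measurability of $s\mapsto\psi(s,u(s))$ — and your treatment is sound: freeze the state on the steps of a piecewise-constant uniform approximation of $u$, apply ($\psi$SM) step by step, and pass to the a.e.\ limit using $\|\psi(s,u_n(s))-\psi(s,u(s))\|\leq L(s)\|u_n(s)-u(s)\|$ with $L(s)<\infty$ a.e.; together with the growth bound $\|\psi(s,u(s))\|\leq L(s)\bigl(1+\|u\|_{C([0,T];L^2)}\bigr)$, which uses the second half of ($\psi$L), namely $|\psi(t,0)|\leq L(t)$, this makes the Bochner integral well defined. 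In short: a complete and correct proof, essentially the standard argument behind the result the paper only cites, streamlined by the weighted norm.
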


\subsubsection{Existence and uniqueness of strict solutions for static coefficient $\alpha\in L^\infty(-1,1)$
}\label{local_strict}
In this section
 we obtain the proof of Theorem \ref{exB}, proving that if the initial state belongs to $H^1_a(-1,1),$ then the {\it mild solution} is also {\it strict}.


\begin{lemma}\label{AppL1}
For every $M>0,$ there exists $T_M>0$ 
such that for all $\alpha\in L^\infty(-1,1),$ 
and all $u_0\in H^1_a(-1,1)$ with $\|u_0\|_{1,a}\leq M$ there is a unique strict solution $u\in{\mathcal{H}}(Q_{T_M})$ to $(\ref{PS}).$
\end{lemma}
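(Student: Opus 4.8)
The plan is to recast \eqref{PS} with the static coefficient absorbed into the generator, $A=A_0+\alpha I$ as in \eqref{DA}, and to solve the resulting semilinear Cauchy problem $u'=Au+\phi(u)$, $u(0)=u_0$, by a contraction argument in the complete metric space
\[
\mathcal{K}_{R,T}:=\big\{v\in\mathcal{H}(Q_{T})\;:\;v(\cdot,0)=u_0,\ \|v\|_{\mathcal{H}(Q_{T})}\le R\big\},
\]
for a radius $R=R(M)$ and a time $T=T_M$ to be fixed (and $T\le 1$, so that $C_0(T)\le C_0(1)=:C_0$). First I would define the solution map $\Gamma$ by letting $\Gamma(v)$ be the unique element of $\mathcal{H}(Q_T)$, furnished by the maximal regularity result Proposition \ref{MaxReg}, that solves the \emph{linear} problem $u'=Au+\phi(v)$, $u(0)=u_0$, where $\phi(v)(x,t)=f(x,t,v(x,t))$. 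This is well posed because, by Proposition \ref{loclip}, $\phi(\mathcal{H}(Q_T))\subseteq L^2(Q_T)$, so the forcing term lies in $L^2(0,T;L^2(-1,1))$ as required by Proposition \ref{MaxReg}; note also that $\Gamma(v)(\cdot,0)=u_0$ by construction, so $\Gamma$ respects the affine constraint defining $\mathcal{K}_{R,T}$.

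Next I would establish the two ingredients of Banach's fixed point theorem. For the self-mapping property, Proposition \ref{MaxReg} gives $\|\Gamma(v)\|_{\mathcal{H}(Q_T)}\le C_0\big(\|u_0\|_{1,a}+\|\phi(v)\|_{L^2(Q_T)}\big)$, and the nonlinear term is controlled by combining the growth bound \eqref{Superlinearit} with the embedding Lemma \ref{sob3}: since $\|\phi(v)\|_{L^2(Q_T)}\le\delta_*\|v\|_{L^{2\vt}(Q_T)}^{\vt}\le c\,\delta_*\,T^{1/2}\|v\|_{\mathcal{H}(Q_T)}^{\vt}$, one obtains $\|\Gamma(v)\|_{\mathcal{H}(Q_T)}\le C_0\big(M+c\,\delta_*\,T^{1/2}R^{\vt}\big)$. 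Choosing $R:=2C_0M$ and then $T$ small enough that $C_0\,c\,\delta_*\,T^{1/2}R^{\vt}\le R/2$ makes $\Gamma$ map $\mathcal{K}_{R,T}$ into itself. For the contraction property I would use the local Lipschitz estimate for $f$ coming from \eqref{fsigni}, namely $|f(x,t,u)-f(x,t,v)|\le\nu\big(1+|u|^{\vt-1}+|v|^{\vt-1}\big)|u-v|$, together with H\"older's inequality and Lemma \ref{sob3}, to obtain
\[
\|\phi(v_1)-\phi(v_2)\|_{L^2(Q_T)}\le C\,T^{1/2}\big(1+R^{\vt-1}\big)\|v_1-v_2\|_{\mathcal{H}(Q_T)};
\]
since $\Gamma(v_1)-\Gamma(v_2)$ solves the linear problem with zero initial datum and forcing $\phi(v_1)-\phi(v_2)$, Proposition \ref{MaxReg} yields $\|\Gamma(v_1)-\Gamma(v_2)\|_{\mathcal{H}(Q_T)}\le C_0\,C\,T^{1/2}(1+R^{\vt-1})\|v_1-v_2\|_{\mathcal{H}(Q_T)}$, and shrinking $T$ once more makes this factor strictly less than $1$.

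Having produced a unique fixed point $u=\Gamma(u)\in\mathcal{K}_{R,T}$, I would observe that $u\in\mathcal{H}(Q_T)$ solves $u'=A_0u+\alpha u+\phi(u)$ with $u(\cdot,0)=u_0$, the boundary conditions being encoded in $D(A_0)$; hence $u$ is a strict solution of \eqref{PS} in the sense of Definition \ref{strictsolution}. Uniqueness in all of $\mathcal{H}(Q_{T_M})$, and not merely within the ball $\mathcal{K}_{R,T}$, is then immediate: a strict solution is in particular a strong solution, so any two strict solutions issued from the same datum coincide by the contraction estimate \eqref{inedc} of Proposition \ref{uni} with $u_0=v_0$.

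The crucial point, and the main obstacle, is to arrange that the final time $T_M$ depends only on the bound $M$ on $\|u_0\|_{1,a}$. This forces one to reconcile two competing effects: the radius $R$ must be large (of order $C_0 M$) to absorb the linear evolution of the initial datum, while the superlinear growth of $f$ — with $\vt$ up to $\vt_{\sup}\in\{3,4\}$ — inflates the self-mapping and contraction constants by the factors $R^{\vt}$ and $R^{\vt-1}$. The reconciliation rests entirely on the gain of the positive power $T^{1/2}$ supplied by the smoothing embedding $\mathcal{H}(Q_T)\hookrightarrow L^{2\vt}(Q_T)$ of Lemma \ref{sob3}, which lets one defeat these growing constants by taking $T_M=T_M(M)$ sufficiently small. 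Finally, the uniformity with respect to $\alpha$ is what dictated placing $\alpha$ inside the self-adjoint generator $A$ at the outset (Proposition \ref{str cont}): with this choice the nonlinear estimates above involve only the structural constants $\delta_*,\nu,\vt$ of assumption $(SL)$ and no longer see $\alpha$ explicitly, so that $T_M$ may be selected independently of $\alpha\in L^\infty(-1,1)$ and of the particular $u_0$ with $\|u_0\|_{1,a}\le M$.
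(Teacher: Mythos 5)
Your fixed-point scheme is, in substance, the paper's own: the paper likewise works in the ball $\{u\in\mathcal{H}(Q_T):\|u\|_{\mathcal{H}(Q_T)}\leq 2C_0(1)M\}$, defines the solution map through the variation-of-constants formula, invokes the maximal regularity estimate of Proposition \ref{MaxReg} together with Lemma \ref{sob3} and the growth and Lipschitz bounds of assumption $(SL)$, and defeats the factors $R^{\vt}$ and $R^{\vt-1}$ by the same gain of $T^{1/2}$. Two of your deviations are harmless and even slightly cleaner: you cite Proposition \ref{loclip} for $\phi(\mathcal{H}(Q_T))\subseteq L^2(Q_T)$ (the paper invokes Proposition \ref{f in L2}, whose statement concerns strict solutions but whose proof uses only \eqref{Superlinearit} and Lemma \ref{sob3}, hence applies to any element of $\mathcal{H}(Q_T)$), and you upgrade uniqueness from the ball to all of $\mathcal{H}(Q_{T_M})$ via Proposition \ref{uni}, whereas the paper stops at uniqueness of the fixed point in $\mathcal{H}_M(Q_{T_M})$. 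The main structural difference is that you place $\alpha$ inside the generator, $A=A_0+\alpha I$, while the paper keeps the generator $A_0$ and treats $\alpha u$ as part of the forcing $\psi(t,u)=\alpha u+f(\cdot,\cdot,u)$.

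The genuine gap is your closing claim that this choice makes $T_M$ independent of $\alpha$. That conclusion requires the constant $C_0(T)$ of Proposition \ref{MaxReg} to be uniform over $\alpha\in L^\infty(-1,1)$, which is neither asserted in the proposition nor true in general: Proposition \ref{MaxReg} is stated for the $\alpha$-dependent operator $A$ of \eqref{DA}, already the semigroup bound degrades like $e^{t\|\alpha^+\|_\infty}$, and if one derives maximal regularity for $A$ by perturbation from $A_0$ one must absorb a term $\|\alpha\|_\infty\|u\|_{L^2(Q_T)}$, which forces either $C_0$ or the admissible time to depend on $\|\alpha\|_\infty$. So your recasting hides the $\alpha$-dependence inside $C_0(T)$ rather than eliminating it, and the final sentence of your argument is unsupported. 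To be fair, the paper's proof carries the same dependence in plain sight — its Step 1 estimate contains $\|\alpha\|_\infty\|u\|_{L^2(Q_T)}$, so its $T_0(M)$ implicitly depends on $\|\alpha\|_\infty$ despite the notation — and for everything the lemma is used for (the extension to global existence in Remark \ref{s_global} and the interval-by-interval iteration in the proof of Theorem \ref{exB}, where $\alpha$ is fixed) only uniformity in $u_0$ with $\|u_0\|_{1,a}\leq M$ is needed. Your construction does deliver that, so the existence and uniqueness conclusion stands; you should either delete the uniformity-in-$\alpha$ claim or make $C_0$'s dependence on $\|\alpha\|_\infty$ explicit and accept $T_M=T_M(M,\|\alpha\|_\infty)$.
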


\begin{proof}
Let us fix $M>0,$ $u_0\in H^1_a(-1,1)$ such that $\|u_0\|_{1,a}\leq M.$
Let $0<T\leq 1,$ 
we define
$${\mathcal{H}}_M(Q_{T}):=\{u\in {\mathcal{H}}(Q_{T})\,:\,\|u\|_{{\mathcal{H}}(Q_{T})}\leq 2C_0(1) M\},$$
 where $C_0(1)$ is the constant $C_0(T)$ (nondecreasing in $T$) defined in Proposition \ref{MaxReg} and valued in $T=1$.   Then, let us consider the map
$$\Phi :{\mathcal{H}}_M(Q_{T})\longrightarrow{\mathcal{H}}_M(Q_{T}),$$
defined by
\begin{equation*}\label{contr}
   \Phi(u)
   :=e^{tA_0}u_0+\int_0^t e^{(t-s)A_0} 
 \left(\alpha u(s)+f(s,u(s))  \right)ds\,,\;\;\;\;\;\forall u\in {\mathcal{H}}(Q_{T}). 
\end{equation*}
\textit{STEP. 1 \;\;We prove that 
the map $\Phi$ is well defined for some $T$.}\\
Fix $u\in {\mathcal{H}}_M(Q_{T}).$
Let us consider
$y(t):=\Phi\left(u\right)(t),$ 
then, keeping in mind Proposition \ref{Ball_mild}, we can see the function $y$ as the solution of the 
problem
\begin{equation}\label{NLmild0}
 \left\{\begin{array}{l}
\displaystyle{y^\prime(t)=A_0\,y(t)+ \left(\alpha
u(t)+f(t,u(t))  \right)
\,,\qquad  t\in[0,T] }\\ [2.5ex]
\displaystyle{y(0)=u_0 \in H^1_a(-1,1) 
}~.
\end{array}\right.
\end{equation}
By Proposition \ref{f in L2} we deduce that $f(\cdot,\cdot,u)\in L^2(Q_{T}),$ thus 
$$g(t):=\psi(t,u(t))\in L^2(0,T;L^2(-1,1)).$$
Then, applying Proposition \ref{MaxReg} we deduce that there exists a unique solution $y\in {\mathcal{H}}(Q_{T})$ of (\ref{NLmild0})  such that
$$\|y\|_{{\mathcal{H}}(Q_{T})}\leq C_0(T) \left(\|u_0\|_{1,a}+\|\psi(\cdot,u(\cdot))\|_{L^2(Q_{T})}\right).$$
Thus, keeping in mind that by Proposition \ref{MaxReg} we have $C_0(T)\leq C_0(1)$ since $T\leq1,$ applying Lemma \ref{sob3} and Proposition \ref{f in L2} there exists $T_0(M)\in(0,1)$ such that
\begin{multline*}
\!\!\!\! \|\Phi(u)\|_{{\mathcal{H}}(Q_{T})}=\|y\|_{{\mathcal{H}}(Q_{T})}\leq
 C_0(1) \left(\|\alpha\|_\infty\|u\|_{L^2(Q_T)}+\|f(\cdot,\cdot,u)\|_{L^2(Q_{T})}+\|u_0\|_{1,a}\right)\\
 \leq 2C_0(1)M, \; \forall T\in[0,T_0(M)].
\end{multline*}
Then,
$\Phi u\in{\mathcal{H}}_M(Q_{T}),\; \forall T\in[0,T_0(M)].$ 
\\
\textit{STEP. 2\; We prove that exists
$T_M
$ such that the map $\Phi$ is a contraction.}\\
Let $T\in(0,T_0(M)].$ 
  Fix $u,v\in {\mathcal{H}}_M(Q_T)$ and
set
$$w:=\Phi(u)-\Phi(v)=\int_0^t e^{(t-s)A_0} 
 \left[\alpha\left(u(s)-v(s)\right)+\left(f(s,u(s))-f(s,v(s))\right)\right]ds$$ 
then, keeping in mind Proposition \ref{Ball_mild}, 
 $w$ is solution of the following problem
\begin{equation}\label{LA4W}
\begin{cases}
w_t-(a\,w_x)_x=\psi(t,u)-\psi(t,v)  \;\;\; \mbox{ in }\; Q_T
\\
B.C.
\\
w(x,0)=0\;.\qquad\qquad\qquad  
\end{cases}
\end{equation}
By Lemma \ref{f in L2} $f(\cdot,u)\in L^2(Q_T)
$ and applying Proposition \ref{MaxReg} we deduce that
a unique solution $w\in {\mathcal{H}}(Q_T)$ of (\ref{LA4W}) exists and we have
\begin{equation}\label{Wcontr}
\|\Phi(u)-\Phi(v)\|_{{\mathcal{H}}(Q_T)}=\|w\|_{{\mathcal{H}}(Q_T)}\leq
 C_0(1) 
 \|\psi(\cdot,u)-\psi(\cdot,v)\|_{L^2(Q_T)}.
\end{equation}
From \eqref{Wcontr} using \eqref{lip_psi} and applying Lemma \ref{sob3} and Proposition \ref{f in L2} (for similar estimates see also Lemma B.1 in \cite{F1}), there exists $T_M\in(0,T_0(M))$ such that
$\Phi$ is a contraction map.
Therefore, $\Phi$ has a unique fix point in ${\mathcal{H}}_M(Q_{T_M}),$ from which the conclusion follows.\\
\end{proof}

\begin{remark}\label{s_global}
Using a classical \lq\lq semigroup'' result (see, e.g., \cite{Pazy}), applying Lemma \ref{AppL1} and the \lq\lq a priori estimate'' contained in Lemma \ref{esist glob0}, we directly obtain the global existence of the strict solution to $(\ref{PS}).$ That is, following the proof of Lemma \ref{AppL1} the unique mild solution, given by Proposition \ref{mild_LY}, is also a strict solution.  Thus, we obtain the proof of Theorem \ref{exB} in the case of a {\it static} reaction coefficient $\alpha\in L^\infty(-1,1)$.
\end{remark}

\subsubsection{Regularity of the mild solution to \eqref{PS} with initial data in $H^1_a(-1,1)$}\label{strictth0}

Now, we can give the complete proof of Theorem \ref{exB} in the general case when $\alpha\in L^\infty(Q_T)$ is a piecewise static function.
\begin{proof}[Proof of Theorem \ref{exB}]
Let us consider the problem \eqref{PS} under the assumptions $(SL)$ and $(Deg).$ Let us assume that $u_0\in H^1_a(-1,1)$ and $\alpha\in L^\infty(Q_T)$ is a piecewise static function (or a {\it simple function} with respect to the variable {\it t}), in the sense of Definition \ref{piece}, that is, there exist $m\in\N,$ $\alpha_k(x)\in L^\infty(-1,1)$ and $t_k\in [0,T], \,t_{k-1}<t_k,\, k=1,\dots,m$ with $t_0=0 \mbox{ and } t_m=T,$ 
such that $$\alpha(x,t)=\alpha_1(x)\1_{[t_{0},t_1]}(t)+\sum_{k=2}^m \alpha_k(x){\1}_{(t_{k-1},t_k]}(t),$$ where ${\1}_{[t_{0},t_1]}\,  \mbox{  and  }  \,{\1}_{(t_{k-1},t_k]}$ are the indicator function of $[t_{0},t_1]$ and $(t_{k-1},t_k]$, respectively.
Let $u\in C([0,T];L^2(-1,1))
$ the unique mild solution of \eqref{PS} with initial state $u_0\in H^1_a(-1,1),$ given by Proposition \ref{mild_LY}
\begin{equation*}\label{contr}
   u(t):=e^{tA_0}u_0+\int_0^t e^{(t-s)A_0} 
 \left(\alpha(s)u(s)+f(s,u(s))  \right)\,ds\,,\;\forall t\in [0,T],
\end{equation*}
then, 
$u,$ for $k=1,\cdots,m$, is the solution of the following $m$
problems 
\begin{equation}\label{NLmild0}
 \left\{\begin{array}{l}
\displaystyle{U^\prime(t)=A_0\,U(t)+ \alpha_kU(t)+f(t,U(t))  
\,,\qquad  t\in [t_{k-1},t_k] }\\ [2.5ex]
\displaystyle{U(t_k)=u(t_k) 
\qquad\qquad\qquad\qquad \qquad\qquad \quad   k=1,\cdots,m}~.
\end{array}\right.
\end{equation}
Since $u_0\in H^1_a(-1,1)$ and $\alpha_k\in L^\infty(-1,1)$ $(k\in\{1,\cdots,m\})$ is static on $[t_{k-1},t_k],$ 
then applying $m$ times Remark \ref{s_global} we obtain that the unique mild solution $u$ is also strict on $[t_{k-1},t_k],$ then the \lq\lq new'' initial condition $u(t_k)$ will belong to $H^1_a(-1,1)$. Thus, by iteration from $[0,t_1]$ to $[t_{m-1},t_m]$  we can complete the proof and we obtain that the mild solution $u\in{\mathcal{H}}(Q_{T})$ and it is also a strict solution on $[0,T]$.
\end{proof}


\begin{remark}
Keeping in mind Proposition \ref{mild_LY} it follows that we can extend Theorem \ref{exB} to the general case $\alpha\in L^\infty(Q_T).$ Namely, in that case $\alpha$ is {\it strongly measurable}, in the sense of the condition $(\psi SM),$ moreover we can generalize the proof of Theorem \ref{exB} from $\alpha$ piecewise static function to $\alpha$ strongly measurable.
\end{remark}


\begin{remark}
To prove Theorem \ref{exB} one can also follow a different approach, developed in the literature, in particular, by Kato in \cite{Kato} and by Evans in \cite{Evans} (see also Pazy's book \cite{Pazy}). This approach consist to consider in \eqref{NLmild} directly as operator $A(t):=A_0+\alpha(t)I,$ in which the dependence on t is discontinuous, instead of the simple operator $A_0$ that is constant in $t.$ 
\end{remark}
\subsection{Existence and uniqueness of the strong solution to \eqref{PS}}\label{strongth}
In this section we recall the proof of Theorem \ref{strong th}, given in \cite{F1} for the $(SDeg)$ case, and in \cite{F2} for the $(WDeg).$

\begin{proof}[Proof of Theorem \ref{strong th}]
Since $u_0\in L^2(-1,1),$ there exists $\{u^0_k\}_{k\in\N}\subseteq H^1_a(-1,1)$ such that 
$\displaystyle \lim_{k\rightarrow\infty}u_k^0= u_0$ in $L^2(-1,1).$
For every $k\in\N,$ we consider the following problem
\!\!\!\!\!\!\!\!\!\!\!\!\!\!\!\!\!\!\!\!\!\!\!\!\!\!\!\!\!\!
\begin{equation}\label{Pk}
\left\{\begin{array}{l}
\displaystyle{u_{kt}-(a(x) u_{kx})_x =\alpha(x,t)u_k+ 
f(x,t,u_k)\mbox{ \:\;\;\;\;\qquad\qquad a.e. \,in}\,Q_T}\\ [2.5ex]
\displaystyle{B.C.
}\\ [2.5ex]
\displaystyle{u_k(x,0)=u^0_k (x) \,\qquad\qquad\quad\qquad\qquad\qquad\qquad\qquad\;\; \quad\,x\in(-1,1)}~.
\end{array}\right.
\end{equation}
For every $k\in\N,$ by Theorem \ref{exB}
 there exists a unique $u_k\in{\mathcal{H}(Q_T)}$ strict solution to \eqref{Pk}.
Then, we consider the sequence $\{u_k\}_{k\in\N}\subseteq{\mathcal{H}(Q_T)}$ and by direct application of the Proposition \ref{uni} 
 we prove that $\{u_k\}_{k\in\N}$ is a \textit{Cauchy} sequence in the Banach space $\mathcal{B}(Q_T)$. Then, there exists $u\in{\mathcal{B}(Q_T)}$ such that, as $k\rightarrow\infty,$ $u_k\longrightarrow u$ in ${\mathcal{B}(Q_T)}$
and $\displaystyle u(\cdot,0)\stackrel{L^2}{=}\lim_{k\rightarrow\infty}u_k(\cdot,0)\stackrel{L^2}{=}u_0.$
So, $u\in {\mathcal{B}(Q_T)}$ is a strong solution.\\
 The uniqueness of the strong solution to \eqref{PS} is a direct consequence of Proposition \ref{uni}.
 \end{proof}

\section*{Acknowledgment}
The author is indebted to Prof. Ildefonso Diaz both for the suggestions about mathematical point of view for \lq\lq \textit{Energy Balance Climate Models}'', and for the cri\-ticism and the useful comments which have made this paper easier to read and understand.
The author thanks also Prof. Enrique Zuazua and the DyCon ERC team at the University of Deusto in Bilbao (in particular Umberto Biccari and Dario Pighin) for the useful and interesting discussions about the \textit{control issues} related to this paper, when the author was visitor at DeustoTech in Bilbao, partially supported by the ERC DyCon.
This work is also supported by the Istituto Nazionale di Alta Matematica (IN$\delta$AM),
through the GNAMPA Research Project 2019. 
Moreover, this research was performed in the framework of the 
French-German-Italian Laboratoire International Associ\'e (LIA), named COPDESC, on Applied Analysis,  issued by CNRS, MPI and IN$\delta$AM.

\end{document}